\title[Rationally Smooth Affine Schubert Varieties]{Pattern Characterization of Rationally Smooth Affine Schubert Varieties of Type $A$}
\author{Sara Billey \and Andrew Crites}
\date{\today}
\thanks{The authors acknowledge support from grant DMS-0800978 from the National Science Foundation.}
\address{Department of Mathematics, University of Washington, Box 354350, Seattle, Washington, 98195-4350, billey@uw.edu, acrites@uw.edu}
\keywords{pattern avoidance, affine permutations, Schubert varieties}
\newtheorem{thm}{Theorem}[section]
\newtheorem*{thm*}{Theorem}
\newtheorem{lemma}[thm]{Lemma}
\newtheorem{prop}[thm]{Proposition}
\newtheorem{conj}{Conjecture}
\newtheorem{cor}[thm]{Corollary}
\newtheorem{remark}[thm]{Remark}
\numberwithin{equation}{section}
\theoremstyle{definition}
\newtheorem{definition}[thm]{Definition}
\newcommand{\poin} {P}
\newcommand{\scrR} {\mathcal{R}}
\newcommand{\scrA} {\mathcal{A}}
\newcommand{\Z} {\mathbb{Z}}
\newcommand{\C} {\mathbb{C}}
\newcommand{\N} {\mathbb{N}}
\newcommand{\Inv} {\mathrm{Inv}}
\newcommand{\Stab} {\mathrm{Stab}}
\newcommand{\pt} {\mathrm{pt}}
\newcommand{\GL} {\mathrm{GL\,}}
\newcommand{\affinv}{\Inv_{\wt{S}_n}}
\newcommand{\ol}[1] {\overline{#1}}
\newcommand{\wt}[1] {{\widetilde{#1}}}
\newcommand{\wh}[1] {\widehat{#1}}
\newcommand{\myomega}{w}
\newcommand{\mynu}{v}
\newcommand{\mymu}{u}
\newcommand{\mychi}{x}
\newcommand{\mysigma}{\sigma}
\begin{document}

\begin{abstract}
Schubert varieties in finite dimensional flag manifolds $G/P$ are a
well-studied family of projective varieties indexed by elements of the
corresponding Weyl group $W$.  In particular, there are many tests for
smoothness and rational smoothness of these varieties.  One key result
due to Lakshmibai-Sandhya is that in type $A$ the smooth Schubert
varieties are precisely those that are indexed by permutations that
avoid the patterns $4231$ and $3412$.  Recently, there has been a
flurry of research related to the infinite dimensional analogs of flag
manifolds corresponding with $G$ being a Kac-Moody group and $W$ being
an affine Weyl group or parabolic quotient.  In this paper we study
the case when $W$ is the affine Weyl group of type $A$ or the affine
permutations.  We develop the notion of pattern avoidance for affine
permutations.  Our main result is a characterization of the rationally
smooth Schubert varieties corresponding to affine permutations in
terms of the patterns $4231$ and $3412$ and the twisted spiral
permutations.

\end{abstract}

\maketitle



\section{Introduction}
\label{intro}

The study of Schubert varieties and their singular loci incorporates tools from algebraic geometry, representation theory and combinatorics.
One celebrated result in this area due to Lakshmibai-Sandhya is that in classical type $A$ the smooth Schubert 
varieties are precisely those that are indexed by permutations that avoid the patterns $4231$ and $3412$ \cite{LakSan}, see also \cite{ryan,Wolper}.
A second important theorem in this area concerns a weaker notion than smoothness based on cohomology, called rational smoothness.
In general, smoothness implies rational smoothness, but not conversely.
For classical Schubert varieties, Peterson showed smoothness is equivalent to rational smoothness precisely in types $A,D,E$.
Recently, there has been a surge of research activity related to affine Schubert varieties
\cite{BilleyMitchell,lakshmibai-kreiman,KuttlerLakshmibai,lam,LLMS:08,lam-schilling-shimo,magyar}.
It is natural to ask how properties of smoothness, rational smoothness, singular loci and tangent spaces
for affine Schubert varieties relate to their classical counterparts.

In this paper we give a criterion for detecting rationally smooth Schubert varieties in affine type $A$.
These varieties are indexed by the set of affine permutations, denoted $\wt{S}_{n}$.
Generalizing the theorem of Lakshmibai-Sandhya, we show that the patterns $4231$ and $3412$ can be interpreted as patterns for affine permutations.
A permutation avoiding these two patterns will again index a rationally smooth affine Schubert variety.
However, there is an infinite family of affine permutations in $\wt{S}_{n}$ which contain $3412$ and yet index rationally smooth affine Schubert varieties.
These varieties are related to the spiral varieties studied by Mitchell \cite{Mitchell}, see also \cite{BilleyMitchell}.
Thus, our main result is a complete characterization of the rationally smooth Schubert varieties in $G/B$ in type $\wt{A}_n$.

\begin{thm}
\label{thm:front} Let $\myomega\in\wt{S}_n$ for $n\geq 3$.  The Schubert
variety $X_{\myomega}$ is rationally smooth if and only if one of the
following hold:
\begin{enumerate}
\item $\myomega$ avoids the patterns 3412 and 4231,
\item $\myomega$ is a twisted spiral permutation (defined in Section~\ref{s:spirals}).
\end{enumerate}
\end{thm}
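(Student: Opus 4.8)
The plan is to reduce everything to a statement about the Bruhat order via the Carrell--Peterson criterion: $X_{\myomega}$ is rationally smooth if and only if its Poincar\'e polynomial $\poin_{\myomega}(q)=\sum_{\mynu\leq\myomega}q^{\ell(\mynu)}$ is palindromic, i.e.\ $\poin_{\myomega}(q)=q^{\ell(\myomega)}\poin_{\myomega}(1/q)$. Since $\wt S_n$ and all of its parabolic quotients make sense combinatorially, this converts the theorem into a purely order-theoretic assertion, which I would attack with two tools: (a) the factorization $\poin_{\myomega}(q)=\poin^{J}_{\mynu}(q)\,\poin_{\mymu}(q)$ valid whenever $\myomega=\mynu\cdot\mymu$ is a length-additive parabolic decomposition with $\mynu$ a minimal representative in $\wt S_n/W_J$, $\mymu\in W_J$, and the decomposition satisfies the Billey--Postnikov condition; and (b) transfer of non--rational--smoothness along pattern embeddings, in the style of Billey--Braden's lower bounds for Kazhdan--Lusztig polynomials, which in the affine setting is exactly what the notion of affine pattern containment developed above is meant to provide. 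The two implications of the theorem are then handled separately.

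For the sufficiency direction, suppose first that $\myomega$ avoids $3412$ and $4231$. The structural heart of the argument is to show that such an $\myomega$ always admits a decomposition as in (a) with respect to a maximal finite parabolic $W_J\cong S_n$ --- taking $J$ to be the complement of a simple reflection $s_k$ lying outside the ``support'' of $\myomega$, or more generally after applying a cyclic rotation that straightens $\myomega$ --- in such a way that $\mymu\in W_J$ still avoids both patterns and that $\poin^{J}_{\mynu}(q)$ is palindromic. Lakshmibai--Sandhya applied inside $W_J$ then gives that $\poin_{\mymu}(q)$ is palindromic, so the product $\poin_{\myomega}(q)=\poin^{J}_{\mynu}(q)\poin_{\mymu}(q)$ is palindromic and $X_{\myomega}$ is rationally smooth. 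For the twisted spiral permutations I would argue directly: using the explicit reduced words for these elements from Section~\ref{s:spirals}, one computes that the lower interval $[e,\myomega]$ decomposes as a product of chains, so $\poin_{\myomega}(q)$ is an explicit product of factors of the form $1+q+\cdots+q^{m}$, which is manifestly palindromic.

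For the necessity direction I would argue by contrapositive. If $\myomega$ contains $4231$, then the affine pattern embedding produces a finite parabolic $W'\cong S_4$ and a sub-configuration isomorphic to $4231$, and the Billey--Braden inequality forces some $P_{\mynu,\myomega}\neq 1$, hence $\poin_{\myomega}(q)$ is not palindromic and $X_{\myomega}$ is not rationally smooth. So we may assume $\myomega$ avoids $4231$ but contains $3412$, and the goal is to show $\myomega$ must be a twisted spiral. Here I would do a case analysis on an occurrence of $3412$: using the $n$-periodicity one normalizes the four positions, and one shows that unless they ``wrap around'' coherently into the spiral shape, the surrounding values must either create a $4231$ (contradiction) or a second interlocking $3412$, and in the latter case a short computation using the factorization (a) exhibits a non-palindromic factor of $\poin_{\myomega}(q)$. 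When no such obstruction occurs, the positional constraints pin down the one-line form of $\myomega$ exactly as a twisted spiral permutation.

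The main obstacle is the final case of the necessity direction: identifying precisely which $3412$-containing affine permutations escape non--rational--smoothness. Because $3412$-containment is not by itself fatal in the affine setting (the spirals are live counterexamples), one cannot simply invoke a pattern lemma; instead one must prove that the \emph{only} benign configurations are the twisted spirals, which amounts to a delicate classification of the ``boundary'' affine permutations and a careful bookkeeping of how copies of $3412$ can sit inside a periodic permutation without forcing either a $4231$ or a non-palindromic parabolic factor. By comparison, the sufficiency direction for pattern-avoiding $\myomega$ that are not contained in any finite parabolic (for instance certain translation elements) is a secondary technical point, resolved by a good choice of the straightening rotation used in step (a).
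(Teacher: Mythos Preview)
There are genuine gaps in both directions. For necessity via Billey--Braden: the affine pattern containment defined here allows the four indices of a $4231$ occurrence to have repeated residues modulo $n$ (the paper gives such an example explicitly), and in that case there is no copy of $S_4$ sitting inside $\wt S_n$ as a reflection subgroup on those positions, so the Billey--Braden transfer of Kazhdan--Lusztig inequalities does not apply. The paper instead shows directly that the coefficient $c_1$ of $q$ in $\poin_{\myomega}$ is strictly less than $c_{\ell(\myomega)-1}$, by building a graph on a window of positions whose edges correspond to covering relations below $\myomega$ and proving it has more than $n$ edges whenever a $4231$ is present. The $3412$-containing non-spiral case is handled by a separate, rather intricate case analysis using affine Bruhat pictures: one normalizes the $3412$ occurrence, flattens the pair, and exhibits explicit $\mychi<\myomega$ with $\#\scrR(\mychi,\myomega)>\ell(\myomega)-\ell(\mychi)$.

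For sufficiency, your claim that the lower interval of a twisted spiral is a product of chains is false: by Mitchell's result the quotient Poincar\'e polynomial of $c(i,k(n-1))$ in the affine Grassmannian is the $q$-binomial $\binom{k+n-1}{k}_q$, and for instance $\binom{4}{2}_q=1+q+2q^2+q^3+q^4$ is palindromic but not a product of $q$-integers; the paper gets palindromicity of $\myomega^{(i,k)}$ by multiplying this $q$-binomial by $\poin_{\myomega_0^{J_i}}$ via the Billey--Postnikov factorization. Your one-step reduction of a $3412$/$4231$-avoiding $\myomega$ to a finite parabolic is also too optimistic: such $\myomega$ need not lie in any proper parabolic, even after cyclic rotation, and you give no reason the quotient factor $\poin^J_{\mynu}$ should be palindromic. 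The paper's argument is an \emph{iterated} Gasharov-style factoring: one locates a ``factoring subword'' in the one-line notation of $\myomega$ or $\myomega^{-1}$, peels off a Grassmannian factor $\mysigma$ with ${}^J\poin_{\mysigma}(q)=\binom{k}{j}_q$, shows the remaining $\Psi(\myomega)\in(\wt S_n)_J$ is the maximal parabolic element below $\myomega$ and still avoids both patterns, and inducts on length.
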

Note, $\wt{S}_2$ is the infinite dihedral group.  It follows that
$X_{w}$ is rationally smooth for all $w \in \wt{S}_2$.  Hence,
throughout this paper we will assume $n\geq 3$ unless otherwise
specified.  

Every point in $X_{\myomega}$ which is not rationally smooth must be singular, hence we get a necessary condition to detect singular affine Schubert varieties.

\begin{cor}\label{cor:smoothness}
Let $\myomega\in\wt{S}_n$ for $n\geq 3$.
If $\myomega$ contains either a 3412 or 4231 pattern, then $X_{w}$ is singular.
\end{cor}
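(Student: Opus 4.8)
The plan is to derive the corollary from Theorem~\ref{thm:front} together with the standard fact, recalled just above, that a point of $X_\myomega$ which is not rationally smooth is a singular point; in particular, a Schubert variety that fails to be rationally smooth is singular. So assume $\myomega \in \wt{S}_n$ contains a $3412$ or a $4231$ pattern. Then condition~(1) of Theorem~\ref{thm:front} fails for $\myomega$, and it is natural to split into two cases according to whether condition~(2) holds, i.e.\ whether $\myomega$ is a twisted spiral permutation.

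Suppose first that $\myomega$ is \emph{not} a twisted spiral permutation. Then neither condition of Theorem~\ref{thm:front} holds, so $X_\myomega$ is not rationally smooth. By the remark above this forces $X_\myomega$ to contain a singular point, so $X_\myomega$ is singular and this case is finished.

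The remaining case is the one where $\myomega$ is a twisted spiral permutation that nevertheless contains one of the two forbidden patterns; here Theorem~\ref{thm:front} tells us $X_\myomega$ \emph{is} rationally smooth, so the argument of the previous paragraph does not apply and a singular point must be exhibited directly. This is precisely the phenomenon behind Mitchell's spiral Schubert varieties, whose Poincar\'e polynomials are palindromic --- hence rational smoothness --- while the underlying varieties are nonetheless singular \cite{Mitchell,BilleyMitchell}. Concretely, I would carry out this last step inside the analysis of twisted spirals in Section~\ref{s:spirals}: from the explicit one-line notation of a twisted spiral permutation one first observes that $\myomega$ can contain a $3412$ or $4231$ pattern only when it is ``large enough'', and for those $\myomega$ one pins down a $T$-fixed point $e_{\mynu}$ with $\mynu \leq \myomega$ at which the Zariski tangent space of $X_\myomega$ has dimension strictly larger than $\dim X_\myomega = \ell(\myomega)$, for example by counting the reflections $t$ of $\wt{S}_n$ with $\mynu t \leq \myomega$ via the Bruhat-order criterion for tangent-space weights. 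Any such point is singular, so $X_\myomega$ is singular and the proof is complete. I expect this tangent-space estimate for the twisted spirals to be the only real obstacle; everything else is a formal consequence of Theorem~\ref{thm:front} and the comparison between smoothness and rational smoothness.
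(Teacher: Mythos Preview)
Your split into two cases and the handling of the first case (when $\myomega$ is not a twisted spiral) are correct and match the paper's reasoning exactly.

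The gap is in the second case. The count $\#\{t \in T : \mynu t \leq \myomega\}$ that you propose equals $\ell(\mynu) + \#\scrR(\mynu,\myomega)$, and by Proposition~\ref{CarrellPeterson} this equals $\ell(\myomega)$ for \emph{every} $\mynu \leq \myomega$ precisely when $X_\myomega$ is rationally smooth. Since twisted spirals \emph{are} rationally smooth, this count is exactly $\ell(\myomega)$ at every $T$-fixed point, never larger, and so cannot detect a singular point. Your heuristic works in finite type $A$ because there the reflection count genuinely computes the Zariski tangent-space dimension; that identification fails in the Kac--Moody setting, and indeed its failure is what makes the twisted spirals interesting in the first place.

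The paper handles this case by a different route, in Remark~\ref{r:twisted.spiral}: it invokes \cite[Thm~1.1]{BilleyMitchell}, which classifies the smooth Schubert varieties in the affine Grassmannian as exactly the closed parabolic orbits. The spiral varieties $X_{c(i,k(n-1))}$ are not of this form, hence are singular in $\wt{G}/\wt{P}$. Since $\myomega^{(i,k)} = \myomega_0^{J_i}\cdot c(i,k(n-1))$ with $\myomega_0^{J_i}$ the longest element of $W_{J_i}$, the projection $\wt{G}/\wt{B}\to\wt{G}/\wt{P}$ restricts to a locally trivial fibration $X_{\myomega^{(i,k)}}\to X_{c(i,k(n-1))}$ with smooth fiber $\wt{P}/\wt{B}$; smoothness of the total space would force smoothness of the base, a contradiction. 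So the missing ingredient for your second case is geometric input from the affine Grassmannian classification, not a Bruhat-graph count.
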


It was shown in \cite{Crites1} that there are only a finite number of 3412 avoiding affine permutations in $\wt{S}_{n}$.
Thus, there exists a finite number of smooth Schubert varieties indexed by $\wt{S}_{n}$.
Using this fact, we have verified the following conjecture up to $n=5$.

\begin{conj}\label{conj:smooth.char}
Let $\myomega\in\wt{S}_n$.  The Schubert variety $X_{w}$ is smooth if and only if $\myomega$ avoids 3412 and 4231.
\end{conj}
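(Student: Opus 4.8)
We close with a sketch of a possible approach to Conjecture~\ref{conj:smooth.char}. One implication is immediate: Corollary~\ref{cor:smoothness} states that if $\myomega$ contains a $3412$ or a $4231$ pattern then $X_{\myomega}$ is singular, so if $X_{\myomega}$ is smooth then $\myomega$ avoids both patterns. The substance of the conjecture is the converse, and the point is that it asks for \emph{smoothness}, which is strictly stronger than the rational smoothness already provided by Theorem~\ref{thm:front}. So suppose $\myomega$ avoids $3412$ and $4231$; by Theorem~\ref{thm:front}, $X_{\myomega}$ is rationally smooth, and it remains to upgrade this to smoothness. The plan is to use the Zariski tangent space criterion: $X_{\myomega}$ is smooth if and only if it is smooth at the base point $e_{\mathrm{id}}$, and, since $\dim T_{e_{\mathrm{id}}} X_{\myomega}$ equals the number of reflections $t\in\wt S_n$ with $t\le\myomega$ in the affine Bruhat order while $\dim X_{\myomega}=\ell(\myomega)$, the conjecture reduces to the combinatorial identity
\[
  \#\{\, t\ \text{a reflection in}\ \wt S_n : t\le\myomega\,\}=\ell(\myomega)
  \qquad\text{whenever}\ \myomega\ \text{avoids}\ 3412\ \text{and}\ 4231
\]
(the inequality $\ge$ always holds, so only the reverse inequality is at issue).

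Next we would bring in the structure of $3412$-avoiding affine permutations. The result of \cite{Crites1} invoked above shows that there are only finitely many such $\myomega$ in each $\wt S_n$; one expects, moreover, that every $3412$- and $4231$-avoiding $\myomega$ either lies in a proper parabolic subgroup $W_J\subsetneq\wt S_n$ or is captured by a short explicit list. This is promising because every maximal parabolic of $\wt S_n$ is isomorphic to the finite symmetric group $S_n$, so for $\myomega\in W_J$ the Schubert variety $X_{\myomega}$ is a classical type $A$ Schubert variety and the implication ``avoids $3412$ and $4231$ $\Rightarrow$ smooth'' is exactly Lakshmibai--Sandhya \cite{LakSan}. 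The twisted spiral permutations, which by Theorem~\ref{thm:front} form the only other rationally smooth family, all contain $3412$ and should be shown to be singular in Section~\ref{s:spirals}, so they create no interference with the converse. What remains is to isolate the genuinely affine pattern avoiders lying outside finite parabolics and either check them directly or, preferably, treat everything uniformly.

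A uniform argument should run through a Billey--Postnikov style parabolic decomposition. Pick a maximal parabolic $W_J$, write $\myomega=\mynu\,\mymu$ with $\mymu$ the minimal-length representative of the coset $\myomega W_J$, $\mynu\in W_J$, and $\ell(\myomega)=\ell(\mynu)+\ell(\mymu)$, and impose the Billey--Postnikov condition guaranteeing that the induced projection $X_{\myomega}\to X_{\mymu}$ onto the partial flag variety $G/P_J$ is a Zariski-locally-trivial fiber bundle with fiber the Schubert variety of $\mynu$ in $P_J/B$. If one can show that a $3412$- and $4231$-avoiding affine permutation always admits such a decomposition with pattern-avoiding factors, then iterating exhibits $X_{\myomega}$ as an iterated tower of fiber bundles whose bases and fibers are Schubert varieties in finite type $A$ flag and partial flag varieties; since those are smooth by Lakshmibai--Sandhya and its Grassmannian refinements, so is $X_{\myomega}$. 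This is the affine analogue of the classical fiber bundle arguments of Ryan \cite{ryan} and Wolper \cite{Wolper}, with the twisted spirals playing the role of the obstruction that blocks a complete decomposition into pattern-avoiding pieces.

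The main obstacle is that there is no affine analogue of Peterson's theorem: in type $\wt A_n$ rational smoothness genuinely differs from smoothness, the twisted spiral varieties being rationally smooth but singular. Hence Theorem~\ref{thm:front} cannot be promoted for free, and any proof must be sensitive to the difference between a ``spiral'' palindromic Poincar\'e polynomial and an honestly smooth one. Concretely, the delicate part is to establish, uniformly in $n$, either a clean classification of the $3412$- and $4231$-avoiding affine permutations (bounded support, or membership in a finite parabolic up to a controlled remainder) or the existence of a complete geometric Billey--Postnikov decomposition with pattern-avoiding factors. The finiteness statement from \cite{Crites1} only yields a finite verification for each fixed $n$ — which is precisely how the conjecture has been confirmed through $n=5$ — so new structural input beyond Theorem~\ref{thm:front} is needed to close the gap.
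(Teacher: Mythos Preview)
The statement is labeled a \emph{conjecture} in the paper and is not proved there; the authors only report a computer verification through $n=5$. Your proposal correctly reflects this: you do not claim a proof but rather outline a programme and explicitly flag the remaining gap. In that sense there is nothing to compare against a ``paper's proof,'' because none exists.

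Your sketch is well aligned with the paper's own machinery. The iterated Billey--Postnikov decomposition you propose is essentially the geometric shadow of what Section~\ref{One Direction} already does combinatorially: the map $\Psi$ factors any $3412$- and $4231$-avoiding $\myomega$ as $\myomega'\sigma$ with $\myomega'\in(\wt S_n)_J$ maximal and $\sigma\in{}^J(\wt S_n)$ Grassmannian, and Lemma~\ref{recursiveStep} guarantees the factor $\myomega'$ again avoids both patterns, so the recursion terminates in finite type~$A$ pieces. The open step, as you say, is to promote this Poincar\'e polynomial factorization to an honest Zariski-locally-trivial fibration of Schubert varieties in the affine flag variety; the paper does not attempt this.

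One technical caution: your reduction to the identity $\#\{t\ \text{reflection}:t\le\myomega\}=\ell(\myomega)$ presupposes an affine analogue of the Lakshmibai--Seshadri tangent space formula $\dim T_{e_{\mathrm{id}}}X_\myomega=\#\{t:t\le\myomega\}$. That formula is established in finite type~$A$ but is not asserted in this paper for the Kac--Moody setting, where the standard smoothness criterion is Kumar's in terms of equivariant multiplicities. If you pursue this route you would need either to justify that formula in affine type~$A$ or to work directly with Kumar's criterion. Your structural claim that pattern-avoiding $\myomega$ ``lie in a proper parabolic or a short explicit list'' is also not proved in the paper; what Section~\ref{One Direction} gives is the weaker statement that $\Psi(\myomega)$ lies in a proper parabolic, not $\myomega$ itself.
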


This paper is organized as follows.
In Section~\ref{s:background}, we present the basic definitions and constructions needed in this paper.
In Section~\ref{One Direction}, we begin the proof of Theorem~\ref{thm:front} by first discussing the case that $\myomega$ avoids the two patterns.
In particular, we show that the Poincar\'e polynomial of the cohomology ring of $X_{\myomega}$ factors into
palindromic factors as Gasharov showed in the classical type $A$ case \cite{Gasharov98,GR2000}.
By a theorem of Carrel-Peterson~\cite{CarrellPeterson}, this implies $X_{\myomega }$ is rationally smooth.
In Section~\ref{Converse}, we prove that if $\myomega$ contains the pattern 4231,
then the Poincar\'e polynomial fails to be palindromic at rank 1.
In Section~\ref{s:pictures} we develop the theory of affine Bruhat pictures, which will be used in Section~\ref{s:3412}
to show that if $\myomega$ contains the pattern 3412  then it is either a twisted spiral permutation or 
 we can identify a point in $X_{\myomega}$ which is not rationally smooth.
This will complete the proof of Theorem~\ref{thm:front}.  We conjecture the rationally singular points obtained in the proof are maximal singular points.   
Finally, in Section~\ref{conjectures} we discuss some directions for further research.

\section{Background and Notation}
\label{s:background}

In this section, we will begin by collecting common notation and background necessary to prove the main theorem.
We will proceed from general facts about Schubert varieties and Coxeter groups to specifics of the particular Coxeter group $\wt{S}_{n}$.

\subsection{Schubert Varieties}\label{s:schubertvarieties}
Let $G$ be a semisimple Lie group such as $GL_{n}({\mathbb{C}})$.
Let $B$ be a Borel subgroup of $G$.
For $GL_{n}$, we can take $B$ to be the upper triangular matrices.
The cosets $G/B$ form the points of a \emph{flag variety}.
For each $G$, there is an associated finite Weyl group $W$.
For each $\myomega \in W$, we obtain the \emph{Schubert variety} $X_{\myomega}$ by taking the Zariski closure of the orbit $B\cdot e_{\myomega}$,
where $e_{\myomega}$ represents $\myomega$ embedded in $G$.
There are many good references on Schubert varieties including \cite{Brion:flag,Fulton:97,fulton-harris,GL-book,harris,Hum-LAG,wiki:flags}.

For each semisimple Lie group $G$, there is an associated Kac-Moody group $\wt{G}$ and an associated affine Weyl group $\wt{W}$
obtained from $W$ by adding one additional generator $s_{0}$.
Weyl groups and affine Weyl groups are special cases of Coxeter groups.
For $G=GL_{n}$, the associated $\wt{G}$ is the set of invertible matrices with entries in $\mathbb{C}((z))$, the field of rational functions of $z$.
The affine analog of the flag variety is $\wt{G}/\wt{B}$, where $\wt{B}$ is the set of all invertible matrices, $g$, with entries in $\mathbb{C}[[z]]$,
the formal power series in $z$, with the property that $\left.g\right|_{z=0}$ is upper triangular.
The affine Schubert varieties are the Zariski closures of the orbits $\wt{B} \cdot e_{\myomega}$ for $\myomega$ in $\wt{W}$.
We use the same notation $X_{\myomega}$ for Schubert varieties and affine Schubert varieties,
but the index comes from an affine Weyl group if $X_{\myomega}$ is an affine Schubert variety.
For background on Kac-Moody groups and affine Schubert varieties we recommend \cite{Kumar}.

\subsection{Coxeter Groups}
\label{sub:coxeter}
For a general reference on Coxeter groups, see \cite{BB:05} or \cite{HumphreysCoxGrp}.
Let $W$ be a Coxeter group generated by a finite set $S$ with relations of the form $(s_{i}s_{j})^{m_{ij}}$,
where each $m_{ii}=1$ and $m_{ij}\geq 2$ otherwise.
Any $\myomega\in W$ can be written as a product of elements from $S$ in infinitely many ways.
Every such product will be called an \emph{expression} for $\myomega$.
Any expression of minimal length will be called a \emph{reduced expression},
and the number of letters in such an expression will be denoted $\ell(\myomega)$, the \emph{length} of $\myomega$.
Call any element of $S$ a \emph{simple reflection} and any element conjugate to a simple reflection, a \emph{reflection}.
There is a partial order on any Coxeter group $W$ defined as the transitive closure of the covering relations,
where $u\lessdot\mynu$ if there exists a reflection $t$ with $ut=\mynu$ and $\ell(u)=\ell(\mynu)-1$.
This partial order is called the \emph{Bruhat order} on $W$.
Under Bruhat order, $W$ is a ranked poset, ranked by the length function.

Given two elements $\mymu\le\myomega$ in $W$ related by Bruhat order,
let $[\mymu,\myomega]=\{\mynu\in W:\mymu\le\mynu\le\myomega\}$ denote the interval between $\mymu$ and $\myomega$.
Call the interval $[0,\myomega]$ the \emph{order ideal} of $\myomega$, where $\mymu=0$ is the identity element of $W$.  
The \emph{Poincar\'e polynomial} $P_{\myomega}(q)$ is the rank generating function for the order ideal of $\myomega$.
Specifically, $$\poin_\myomega(q)=\sum_{\mynu\le \myomega}q^{\ell(\mynu)}.$$
This polynomial gets its name from the fact that  $\poin_\myomega(q^2)$ is the Poincar\'e polynomial of the cohomology ring of $X_\myomega$.
Note that  $\mynu\le\myomega$ if and only if $\mynu^{-1}\le\myomega^{-1}$, so we have $\poin_\myomega(q)=\poin_{\myomega^{-1}}(q)$.

We use the following theorem due to Carrell-Peterson to define \emph{rational smoothness}
in terms of the combinatorics of the Bruhat order and Coxeter groups.
Let $T$ be the set of all reflections in $W$, and let $\scrR(\mychi,\myomega)=\{t\in T:\mychi<\mychi t\le\myomega\}$.
For any polynomial $f(q)$ of degree $n$, call $f$ \emph{palindromic} if $f(q)=t^nf(q^{-1})$.

\begin{prop}{\textup{\cite[Theorem E]{CarrellPeterson}}}
\label{CarrellPeterson}
Let $W$ be an (affine) Weyl group.
Let $\myomega\in W$.
Then the following are equivalent.
\begin{enumerate}
\item The (affine) Schubert variety $X_{\myomega }$ is rationally smooth.
\item $\poin_\myomega(q)$ is palindromic.
\item $\#\scrR(\mychi,\myomega)=\ell(\myomega)-\ell(\mychi)$ for all $\mychi\le\myomega$.
\end{enumerate}
\end{prop}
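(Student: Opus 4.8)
The plan is to prove the three equivalences by translating the geometry of $X_{\myomega}$ into the combinatorics of the Bruhat graph, using Kazhdan--Lusztig theory and the maximal torus $T\subseteq\wt{B}$ as the bridge; in the affine case one first observes that although $\wt{G}/\wt{B}$ is only an ind-variety, each $X_{\myomega}$ is a genuine projective variety of dimension $\ell(\myomega)$, and everything below (cell decompositions, intersection cohomology, Kazhdan--Lusztig polynomials, the $T$-action) is available for Kac--Moody flag varieties by the work of Kumar and of Kashiwara--Tanisaki. I would rely on these standing facts: $X_{\myomega}$ is stratified by affine cells $C_{x}\cong\mathbb{A}^{\ell(x)}$ for $x\le\myomega$, so its odd rational cohomology vanishes and $\poin_{\myomega}(q^{2})$ computes $H^{*}(X_{\myomega};\mathbb{Q})$; the intersection cohomology Poincaré polynomial of $X_{\myomega}$ equals the nonnegative combination $\sum_{x\le\myomega}q^{\ell(x)}P_{x,\myomega}(q)$ of Kazhdan--Lusztig polynomials and is palindromic of degree $\ell(\myomega)$ by Poincaré duality for $IH^{*}$; $X_{\myomega}$ is rationally smooth at $e_{x}$ iff $P_{y,\myomega}=1$ for all $y\in[x,\myomega]$; and $T$ acts on $X_{\myomega}$ with fixed points $\{e_{x}:x\le\myomega\}$ and finitely many one-dimensional orbits, whose closures form the Bruhat graph of $[e,\myomega]$ (an edge $\{y,yt\}$ for each reflection $t$ and each $y\le\myomega$ with $y\ne yt\le\myomega$).

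For $(1)\Leftrightarrow(2)$ I would argue as follows. The rationally smooth locus of $X_{\myomega}$ is open and $\wt{B}$-stable, so its complement is a union of Schubert subvarieties, each of which contains $e_{\mathrm{id}}$; hence $X_{\myomega}$ is rationally smooth iff it is rationally smooth at $e_{\mathrm{id}}$, iff $P_{x,\myomega}=1$ for all $x\le\myomega$, iff the intersection cohomology Poincaré polynomial equals $\poin_{\myomega}(q)$. Since $\sum_{x\le\myomega}q^{\ell(x)}P_{x,\myomega}(q)$ is always palindromic and, as each $P_{x,\myomega}$ has nonnegative coefficients and constant term $1$, dominates $\poin_{\myomega}(q)$ coefficientwise, rational smoothness gives $(2)$ at once. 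The converse is not formal --- a priori $\poin_{\myomega}$ could be palindromic while the (larger, also palindromic) intersection cohomology polynomial is strictly larger --- and this is the crux, handled with the inputs mentioned in the last paragraph.

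For $(1)\Leftrightarrow(3)$: orienting each edge of the Bruhat graph of $[e,\myomega]$ from the shorter to the longer endpoint, the out-degree of $y$ is $\#\scrR(y,\myomega)$ and its in-degree is $\#\{t\in T:yt<y\}=\ell(y)$ (since $yt<y\le\myomega$ forces $yt\le\myomega$), so $(3)$ says exactly that this graph is regular of degree $\ell(\myomega)$ --- equivalently, that equality holds simultaneously for all $y\le\myomega$ in Deodhar's inequality $\#\scrR(y,\myomega)\ge\ell(\myomega)-\ell(y)$ (which is itself proved by induction on $\ell(\myomega)-\ell(y)$). If $X_{\myomega}$ is rationally smooth, then all $P_{x,\myomega}=1$; matching this against the tangent space $T_{e_{x}}X_{\myomega}$, whose $T$-weights are read off from the edges of the Bruhat graph at $e_{x}$, one obtains regularity, hence $(3)$. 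For the converse $(3)\Rightarrow(1)$ --- essentially Deodhar's conjecture --- I would use a Morse-theoretic argument of Carrell--Peterson type: a generic cocharacter $\mathbb{C}^{*}\subseteq T$ acts on $X_{\myomega}$ with the same fixed points, and the resulting Bialynicki--Birula decomposition, combined with regularity of the Bruhat graph on every lower interval, forces rational Poincaré duality to hold locally at each $e_{x}$, whence $X_{\myomega}$ is rationally smooth.

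The main obstacle in both $(2)\Rightarrow(1)$ and $(3)\Rightarrow(1)$ is exactly the passage from a combinatorial symmetry (palindromicity, or Bruhat-graph regularity) back to the geometric statement; this genuinely needs the positivity of Kazhdan--Lusztig polynomials together with the $\mathbb{C}^{*}$-Morse/localization machinery, and is the substantive content of the Carrell--Peterson theorem. Once those inputs are in place the affine case costs nothing extra: all of the tools used --- affine cells, intersection cohomology and Kazhdan--Lusztig polynomials, the $T$-action with finitely many fixed points and one-dimensional orbits, and the generic $\mathbb{C}^{*}$-action --- are available for the finite-dimensional projective varieties $X_{\myomega}\subseteq\wt{G}/\wt{B}$, so the finite-type proof transfers essentially verbatim.
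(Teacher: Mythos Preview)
The paper does not supply its own proof of this proposition; it is quoted verbatim as Theorem~E of Carrell--Peterson and used as a black box throughout. So there is nothing to compare your argument against in the paper itself.

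That said, your outline is an accurate high-level description of the architecture of the Carrell--Peterson proof and of why it survives in the Kac--Moody setting (Kumar's machinery supplies the needed cell decompositions, $T$-action with finitely many fixed points and $1$-dimensional orbits, and the identification of $IH^{*}$ with Kazhdan--Lusztig polynomials). The easy directions $(1)\Rightarrow(2)$ and $(1)\Rightarrow(3)$ are essentially as you say. The point I would flag is that for the substantive implications $(2)\Rightarrow(1)$ and $(3)\Rightarrow(1)$ you correctly name the ingredients---nonnegativity of the $P_{x,\myomega}$, Deodhar's inequality, and a generic $\mathbb{C}^{*}$-action giving a Bialynicki--Birula decomposition---but you do not actually carry out the argument; you explicitly defer to ``the substantive content of the Carrell--Peterson theorem.'' In particular, for $(2)\Rightarrow(1)$ one really uses that the palindromic $IH$-polynomial $\sum_{x\le\myomega}q^{\ell(x)}P_{x,\myomega}(q)$ dominates $\poin_{\myomega}(q)$ coefficientwise with equality iff all $P_{x,\myomega}=1$, and then that a palindromic polynomial coefficientwise dominated by another palindromic polynomial of the same degree and with the same leading and constant terms must coincide with it; you state the domination but not this last step. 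For $(3)\Rightarrow(1)$ the Carrell--Peterson argument is an induction showing that regularity of the Bruhat graph on $[e,\myomega]$ forces each local intersection cohomology to satisfy the same Betti-number identities as ordinary cohomology, hence $P_{x,\myomega}=1$; your ``forces rational Poincar\'e duality to hold locally'' is the right slogan but not yet a proof. Since the paper treats the whole proposition as a citation, this level of sketch is entirely appropriate here; just be aware that what you have written is a roadmap rather than a self-contained proof.
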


The main goal of this paper is to determine for which $\myomega\in\wt{S}_n$, $\poin_\myomega(q)$ is palindromic.
Each $\myomega$ with a palindromic Poincar\'e polynomial will factor using a parabolic decomposition as follows.
Given a subset $J\subseteq S$ of the generators for a Coxeter group $W$,
we can define $W_J$ to be the subgroup of $W$ generated by the elements of $J$.
$W_J$ is called the \emph{parabolic subgroup generated by $J$}.
Each coset in $W/W_J$ contains a unique element of minimal length \cite[Proposition 1.10]{HumphreysCoxGrp}.
The set of all minimal length coset representatives is denoted $$W^J=\{\myomega\in W|\ell(\myomega s)>\ell(\myomega)\text{ for all }s\in J\}.$$
Thus, we often identify $W^J$ with these cosets.
We will also need to use left cosets of $W_J$.
Let ${}^JW$ denote the minimal length left coset representatives in $W_J\backslash W$.
Hence, $${}^{J}W=\{\myomega\in W|\ell(s\myomega)>\ell(\myomega)\text{ for all }s\in J\}.$$
Bruhat order on $W$ induces partial orders on ${}^JW$ and $W^J$.
When we wish to refer to the Poincar\'{e} polynomial of a minimal length coset representative
$\myomega$ for the induced order on either of the quotients ${}^JW$ or $W^J$,
we will denote it by ${}^J\poin_\myomega$ or $\poin^J_\myomega$, respectively.
The \emph{parabolic decomposition} for elements of $W$ is given as follows.

\begin{prop}{\textup{\cite[Proposition 2.4.4,2.5.1]{BB:05}}}
\label{decomp}
For every $\myomega\in W$ there exists a unique $u\in W_J$ and a unique $v\in{}^JW$
such that $\myomega=u \cdot v$ and $\ell(\myomega)=\ell(u)+\ell(v)$.
Moreover, the map $\myomega\mapsto v$ is order preserving
as a map from $W$ to the set of minimal length coset representatives.
\end{prop}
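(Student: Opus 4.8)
The plan is to establish the parabolic decomposition stated in Proposition~\ref{decomp}. The essential point is that the coset $\myomega W_J$ contains a unique element of minimal length, which is the known fact cited just before the statement; call this element $v$. I would first show that $v \in {}^JW$, then produce $u \in W_J$ with $\myomega = uv$, then verify the length additivity, and finally check uniqueness and the order-preserving property.

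\emph{Step 1: the minimal representative lies in ${}^JW$.} Write $\myomega W_J \ni v$ for the minimal-length element. If some $s \in J$ satisfied $\ell(sv) < \ell(v)$, then $sv$ would be a shorter element of the same right coset $v W_J$ — wait, one must be careful: $sv$ lies in the \emph{right} coset $vW_J$ only if $s \in W_J$ acts on the right. The clean way is to work instead with the coset $W_J\myomega$ and note that $v \in {}^JW$ is \emph{defined} as the minimal-length element of $W_J \myomega$; the exchange/deletion condition then gives that $\ell(sv)>\ell(v)$ for all $s\in J$, since otherwise the strong exchange property produces a shorter coset representative, contradicting minimality. So I take $v$ to be the unique minimal-length element of $W_J\myomega$, which exists by the left-coset analog of \cite[Proposition 1.10]{HumphreysCoxGrp}, and then $v \in {}^JW$ by construction.

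\emph{Step 2: factorization and length additivity.} Since $v$ and $\myomega$ lie in the same left coset $W_J\myomega = W_J v$, there is $u \in W_J$ with $\myomega = uv$. For length additivity $\ell(\myomega) = \ell(u)+\ell(v)$: given a reduced expression $u = s_{i_1}\cdots s_{i_k}$ with all $s_{i_j} \in J$, I claim $\ell(s_{i_j}\cdots s_{i_k} v) = (k-j+1) + \ell(v)$ by downward induction on $j$, using at each step that left-multiplying the element $s_{i_{j+1}}\cdots s_{i_k} v$ (which already has a reduced expression ending in a reduced expression for $v$) by $s_{i_j}$ cannot shorten it — here one uses that the only way length drops under left multiplication by a simple reflection $s \in J$ is if a reduced word begins with $s$, and the ``$v$-part'' of the word contributes nothing to $J$-descents because $v \in {}^JW$. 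This is the step requiring the most care; the standard tool is the characterization of ${}^JW$ via descents together with the subword property of Bruhat order (or equivalently, a direct application of the deletion condition).

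\emph{Step 3: uniqueness and order-preservation.} Uniqueness of $(u,v)$: if $\myomega = u'v'$ with $u'\in W_J$, $v'\in{}^JW$, and lengths adding, then $v'$ lies in $W_J\myomega$ and has minimal length in that coset (any shorter element would, after multiplying by the appropriate element of $W_J$, contradict the definition of ${}^JW$ via the length-additivity just proved), so $v' = v$ by uniqueness of the minimal representative, whence $u'=u$. For the order-preserving claim $\myomega \mapsto v$: this follows from the subword/lifting property of Bruhat order — if $\myomega \le \myomega'$ and $\myomega = uv$, $\myomega' = u'v'$ are the decompositions, one picks a reduced word for $\myomega'$ that is a concatenation of a reduced word for $u'$ and one for $v'$, finds $\myomega$ as a subword, and argues that the ``$v$'' portion of the subword is a reduced word for an element $\ge v$ lying in ${}^JW$; combined with it being $\le v'$ one concludes $v \le v'$. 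I expect the chief obstacle to be Step~2, the length-additivity, and if desired it can be bypassed by citing \cite[\S1.10]{HumphreysCoxGrp} or \cite[Ch.~2]{BB:05} directly, since the statement is attributed there.
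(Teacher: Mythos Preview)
The paper does not supply its own proof of this proposition; it is stated with attribution to \cite[Propositions~2.4.4 and~2.5.1]{BB:05} and used as a black box. So there is nothing to compare against, and your outline is essentially the standard argument one finds in those references.

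That said, two places in your sketch deserve a warning. In Step~2, the inductive argument for length additivity as written is not self-contained: the claim that left-multiplying $s_{i_{j+1}}\cdots s_{i_k}v$ by $s_{i_j}$ cannot shorten it requires knowing that the left $J$-descents of $u'v$ (for $u'\in W_J$, $v\in{}^JW$) are controlled by $u'$ alone, which is essentially what you are trying to prove. The cleaner route is a single application of the Deletion Condition to the concatenated word for $uv$: if $\ell(uv)<\ell(u)+\ell(v)$, two letters may be deleted, and the three cases (both in $u$, both in $v$, one in each) contradict respectively reducedness of $u$ in $W_J$ (which equals reducedness in $W$), reducedness of $v$, and minimality of $v$ in $W_J\myomega$. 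In the order-preserving step, the subword argument you sketch does not directly produce the minimal representative of $\myomega$; the portion of the subword landing in the $v'$-part need not lie in ${}^JW$. The usual device here is the lifting property (the ``$Z$-lemma'', \cite[Proposition~2.2.7]{BB:05}), which is exactly what \cite[Proposition~2.5.1]{BB:05} invokes. You already note that both steps can be discharged by citation, and since that is precisely what the paper does, your proposal is adequate for the purpose.
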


\begin{lemma}{\textup{\cite[Lemma 2.4.3]{BB:05}}}
\label{parabolicredexp}
An element $\myomega\in W$ will be a minimal length coset representative for $W_J\myomega$ if and only if
no reduced expression for $\myomega$ begins with an element of $J$.
In the case where $J=S\backslash\{s_i\}$ and $\myomega$ is not the identity,
this implies every reduced expression for $\myomega$ must begin with $s_i$.
\end{lemma}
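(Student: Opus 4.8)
The plan is to reduce everything to the standard description of the left descent set of an element of a Coxeter group. Recall that for $\myomega\in W$ the left descent set is $D_L(\myomega)=\{s\in S:\ell(s\myomega)<\ell(\myomega)\}$, and by the definition of ${}^JW$ given in Section~\ref{sub:coxeter} we have $\myomega\in{}^JW$ precisely when $D_L(\myomega)\cap J=\emptyset$. Thus the entire lemma will follow once we prove the bridge claim: for a simple reflection $s\in S$, one has $s\in D_L(\myomega)$ if and only if some reduced expression for $\myomega$ begins with $s$.

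To prove the bridge claim I would argue as follows. One direction is immediate: if $\myomega=s\,s_{i_2}\cdots s_{i_\ell}$ is reduced, then $s\myomega=s_{i_2}\cdots s_{i_\ell}$ is an expression of length $\ell(\myomega)-1$, so $s\in D_L(\myomega)$. For the converse, I would use the basic fact that left multiplication by a simple reflection changes the length by exactly $\pm1$ (so $\ell(s\myomega)<\ell(\myomega)$ forces $\ell(s\myomega)=\ell(\myomega)-1$), pick a reduced expression $s\myomega=s_{j_1}\cdots s_{j_{\ell-1}}$, and observe that $\myomega=s\,s_{j_1}\cdots s_{j_{\ell-1}}$ then has length $\ell(\myomega)$, hence is reduced and begins with $s$. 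This is essentially the exchange/deletion property for Coxeter groups and is available in \cite{BB:05} or \cite{HumphreysCoxGrp}.

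Granting the bridge claim, the first assertion is a formality: $\myomega\in{}^JW$ iff no $s\in J$ lies in $D_L(\myomega)$ iff no reduced expression for $\myomega$ begins with any element of $J$. For the last assertion I would take $J=S\setminus\{s_i\}$ and a nonidentity $\myomega\in{}^JW$; any reduced expression for $\myomega$ is nonempty, so it begins with some simple reflection $s_j$, and by the first assertion $s_j\notin J$, which forces $s_j=s_i$. Since this argument applies to every reduced expression, all of them begin with $s_i$.

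I do not expect a serious obstacle here. The only genuine input is the foundational pair of facts that $\ell(s\myomega)=\ell(\myomega)\pm1$ for $s\in S$ and that one can rewrite a reduced word for $s\myomega$ into a reduced word for $\myomega$ starting with $s$; everything else is bookkeeping. The one spot to be careful with is the phrasing ``no reduced expression begins with an element of $J$'', which is the negation of ``there exist $s\in J$ and a reduced expression of $\myomega$ beginning with $s$'' — once the bridge claim is stated as a clean equivalence over each fixed $s$, this quantifier juggling is routine.
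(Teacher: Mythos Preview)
Your argument is correct and is the standard one. Note that the paper does not actually give its own proof of this lemma; it is simply quoted as \cite[Lemma~2.4.3]{BB:05} and used as a black box, so there is nothing to compare against beyond observing that your bridge claim (that $s\in D_L(\myomega)$ iff some reduced expression for $\myomega$ begins with $s$) is precisely the content of the cited result.
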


\begin{definition}
\label{findingmwJ}   
Let $\myomega=s_{i_1}\cdots s_{i_p}$ be any reduced expression for $\myomega \in W$.
Let $m(\myomega,J)$ denote the element obtained by the following greedy algorithm.
Initially set $m$ to be the identity element.
Then, reading left to right, inductively set $m$ to be $m s_{i_{q}}$ whenever $s_{i_q}$ is in $J$ and $\ell(m s_{i_q})>\ell(m)$.
Finally, set $m(\myomega,J)=m$.
\end{definition}
Computing $m(\myomega,J)$ is equivalent to taking the Demazure product from \cite[Definition 3.1]{KnutsonMiller04} of the subword of
$s_{i_1}\cdots s_{i_p}$ consisting of all of the letters from $J$.

\begin{prop}\label{prop:}{\textup{\cite[Theorem 2.2]{ParabolicMap}}}
For every $\myomega\in W$ and parabolic subgroup $W_{J} \subset W$, the element $m(\myomega,J)$ is the unique maximal element in $[0,\myomega]\cap W_J$.
In particular, $m(\myomega,J)$ is well-defined, independent of the choice of reduced expression for $\myomega$.   
\end{prop}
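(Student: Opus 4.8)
The plan is to establish two facts separately: that $m := m(\myomega,J)$, computed from a fixed reduced expression $s_{i_1}\cdots s_{i_p}$ for $\myomega$, lies in $[0,\myomega]\cap W_J$; and that every element of $[0,\myomega]\cap W_J$ lies weakly below $m$ in Bruhat order. Together these force $m$ to be the unique maximal element of $[0,\myomega]\cap W_J$, and since that description makes no reference to a reduced expression, the independence of $m(\myomega,J)$ from the chosen reduced word (hence its well-definedness) follows at once. Membership of $m$ in $W_J$ is immediate, since the greedy algorithm of Definition~\ref{findingmwJ} only ever multiplies by generators in $J$. Throughout I would use two standard consequences of the subword property (\cite[Ch.~2]{BB:05}): if $v\le w$ and $\ell(ws)>\ell(w)$ then $vs\le ws$; and if $vs\le w$, then $v\le ws$ in case $\ell(ws)>\ell(w)$, while $v\le w$ in case $\ell(ws)<\ell(w)$. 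Each is proved by transporting a reduced subword of a reduced word for $w$ across the generator $s$.

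For the first fact I would run the greedy algorithm and carry the invariant that after reading the prefix $s_{i_1}\cdots s_{i_q}$ the running value $m_q$ satisfies $m_q\le s_{i_1}\cdots s_{i_q}$. The base case $q=0$ is trivial. For the inductive step: when $s_{i_q}\notin J$, or when $s_{i_q}\in J$ but $\ell(m_{q-1}s_{i_q})<\ell(m_{q-1})$, the running value is unchanged and the invariant survives because $s_{i_1}\cdots s_{i_q}$ covers $s_{i_1}\cdots s_{i_{q-1}}$; when $s_{i_q}\in J$ and the length goes up, the invariant is pushed forward by the first standard fact applied to $m_{q-1}\le s_{i_1}\cdots s_{i_{q-1}}$ with $s=s_{i_q}$. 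Taking $q=p$ gives $m\le\myomega$.

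The heart of the argument is the second fact, which I would prove by induction on $\ell(\myomega)$. Write $\myomega=\myomega's$ with $s=s_{i_p}$ and $\ell(\myomega)=\ell(\myomega')+1$; let $m'$ be the value produced by the greedy algorithm from the prefix $s_{i_1}\cdots s_{i_{p-1}}$, which equals $m(\myomega',J)$ and, by the inductive hypothesis, is the maximal element of $[0,\myomega']\cap W_J$. By construction $m=m's$ if $s\in J$ and $\ell(m's)>\ell(m')$, and $m=m'$ otherwise. Now take $v\in W_J$ with $v\le\myomega$. If $v\le\myomega'$, then $v\le m'\le m$ and we are done. Otherwise a reduced subword of $s_{i_1}\cdots s_{i_p}$ with product $v$ must use the final letter $s$ (else $v\le\myomega'$), and deleting it exhibits $vs$ as a reduced subword product of $s_{i_1}\cdots s_{i_{p-1}}$, so $vs<v$ and $vs\le\myomega'$. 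Since $v\in W_J$ has $s$ as a right descent, $s\in J$, hence $vs\in W_J$, and the inductive hypothesis gives $vs\le m'$. Applying the second standard fact to $vs\le m'$ then yields $v\le m's=m$ when $\ell(m's)>\ell(m')$ and $v\le m'=m$ when $\ell(m's)<\ell(m')$, completing the induction.

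The step I expect to be the main obstacle is this last inductive step: one must check carefully that peeling the final generator off of $v$ really lands back inside $W_J$, so that the inductive hypothesis applies to $vs$, and one must arrange the induction so that the independence of $m(\myomega,J)$ from the reduced word emerges as a conclusion rather than being tacitly assumed when we invoke $m(\myomega',J)$. An alternative packaging, worth keeping as a check, is to observe that $m(\myomega,J)$ is precisely the Demazure (0-Hecke) product of the subword of $s_{i_1}\cdots s_{i_p}$ consisting of the letters in $J$; monotonicity of the Demazure product gives $m\le\myomega$ immediately, and the maximality claim becomes a parabolic refinement of the statement that the Bruhat interval below a Demazure product is exactly the set of its subword products. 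I nonetheless expect the self-contained induction above to be the cleanest route to write down.
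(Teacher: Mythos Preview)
The paper does not supply its own proof of this proposition: it is quoted as \cite[Theorem~2.2]{ParabolicMap} and used as a black box. So there is no in-paper argument to compare against.

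That said, your proposal is a correct, self-contained proof. The two ``standard consequences of the subword property'' you invoke are exactly the lifting property (see \cite[Prop.~2.2.7]{BB:05}), and both directions you use hold: from $v\le w$ with $ws>w$ one gets $vs\le ws$; and from $vs\le m'$ one gets $v\le m's$ if $m's>m'$ and $v\le m'$ if $m's<m'$. The key observation you flag---that if $v\in W_J$, $v\le\myomega$ but $v\not\le\myomega'$, then the final letter $s=s_{i_p}$ is a right descent of $v$ and hence lies in $J$---is sound, since the support of $v$ (the set of simple generators in any reduced word) is contained in $J$. This is precisely what makes $vs\in W_J$ so that the inductive hypothesis applies, and it simultaneously forces the greedy algorithm to be in the branch where $s\in J$, so your final case split on $\ell(m's)$ exhausts the possibilities. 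Your care about not assuming well-definedness of $m(\myomega',J)$ is appropriate and correctly handled: the inductive hypothesis is that the greedy output from \emph{any} reduced word for a shorter element equals the unique maximum, so applying it to the particular prefix $s_{i_1}\cdots s_{i_{p-1}}$ is legitimate.

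Your alternative packaging via the Demazure product is also valid and is in fact how the paper motivates Definition~\ref{findingmwJ}; either route recovers the content of \cite[Theorem~2.2]{ParabolicMap}.
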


\begin{prop}{\textup{\cite[Theorem 6.4]{BilleyPostnikov}}}
\label{BilleyPostDecomp}
Suppose $\myomega\in W$ has the parabolic decomposition $\myomega=\mymu\cdot \mynu$ with $\mymu\in W_J$ and $\mynu\in{}^JW$.
If $\mymu=m(\myomega,J)$, then $\poin_\myomega(q)=\poin_\mymu(q)\cdot{}^J\poin_\mynu(q)$.
\end{prop}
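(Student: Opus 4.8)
The plan is to show that the parabolic decomposition map restricts to a length-additive bijection between order ideals, and then obtain the claimed factorization by grouping the rank generating function of $[0,\myomega]$ along the fibers of this map. By Proposition~\ref{decomp}, every element of $W$ factors uniquely as an element of $W_J$ times an element of ${}^JW$ with additive lengths. First I would record the converse: \emph{any} product $ab$ with $a\in W_J$ and $b\in{}^JW$ is already such a decomposition, so that $\ell(ab)=\ell(a)+\ell(b)$ for all such pairs. This follows from a coset argument --- the left coset $W_J\,ab$ equals $W_J b$, which forces $b$ to be its unique minimal-length representative and hence forces $a$ as well, so comparison with Proposition~\ref{decomp} yields the length identity. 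Consequently the parabolic decomposition defines a length-additive bijection $\phi\colon W\to W_J\times{}^JW$. Since $W_J$ is a lower set in Bruhat order and the induced Bruhat orders on $W_J$ and ${}^JW$ are each graded by $\ell$, it then suffices to prove that $\phi$ restricts to a bijection
\[
[0,\myomega]\ \longrightarrow\ [0,\mymu]\times\bigl([0,\mynu]\cap{}^JW\bigr),
\]
for then grouping $\poin_\myomega(q)=\sum_{\mychi\le\myomega}q^{\ell(\mychi)}$ according to $\phi(\mychi)=(a,b)$ and using $\ell(\mychi)=\ell(a)+\ell(b)$ gives exactly $\poin_\mymu(q)\cdot{}^J\poin_\mynu(q)$.

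To see that $\phi$ maps $[0,\myomega]$ \emph{into} the displayed product, let $\mychi\le\myomega$ and write $\mychi=a\cdot b$. The second coordinate is the image of $\mychi$ under the projection $W\to{}^JW$, which is order preserving by Proposition~\ref{decomp}, so $b\le\mynu$. For the first coordinate, concatenating reduced words for $a$ and $b$ gives a reduced word for $\mychi$ in which $a$ appears as a prefix, so $a\le\mychi\le\myomega$; as $a\in W_J$, the maximality of $m(\myomega,J)$ in $[0,\myomega]\cap W_J$ (Proposition~\ref{prop:}) together with the hypothesis $\mymu=m(\myomega,J)$ forces $a\le\mymu$. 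I want to stress that this is the only point where the hypothesis $\mymu=m(\myomega,J)$ is used, and that it is indispensable: for a general parabolic factor $\mymu$, the $W_J$-part of some $\mychi\le\myomega$ can lie outside $[0,\mymu]$, and then $\poin_\myomega$ need not factor in this way.

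For the reverse inclusion, fix $a\le\mymu$ in $W_J$ and $b\le\mynu$ in ${}^JW$. By the first paragraph $ab$ is a parabolic decomposition, so $\phi(ab)=(a,b)$, and it remains only to check $ab\le\myomega$. By length-additivity of the decomposition $\myomega=\mymu\mynu$, a reduced word for $\mymu$ followed by a reduced word for $\mynu$ is a reduced word for $\myomega$. By the subword characterization of Bruhat order \cite{BB:05}, $a\le\mymu$ means some subword of the reduced word for $\mymu$ is a reduced word for $a$; appending the reduced word for $\mynu$ produces a subword of this reduced word for $\myomega$ of length $\ell(a)+\ell(\mynu)=\ell(a\mynu)$, hence a reduced word for $a\mynu$, so $a\mynu\le\myomega$. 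Symmetrically, $b\le\mynu$ exhibits $b$ as a reduced subword of a reduced word for $\mynu$, and prepending a reduced word for $a$ shows $ab\le a\mynu$. Therefore $ab\le a\mynu\le\myomega$, completing the bijection.

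The main obstacle, such as it is, lies in the reverse inclusion: one must arrange the subword argument so that every concatenation stays \emph{reduced} --- this is exactly where length-additivity of the parabolic decomposition is doing the work --- and one must correctly locate the single step that requires $\mymu=m(\myomega,J)$. The remainder is the standard formalism of parabolic decompositions (Proposition~\ref{decomp}) and the subword property of Bruhat order.
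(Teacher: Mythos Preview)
The paper does not give its own proof of this proposition; it is quoted as \cite[Theorem~6.4]{BilleyPostnikov} and used as a black box. Your argument is correct and is essentially the standard proof of this factorization: the length-additive bijection $[0,\myomega]\to[0,\mymu]\times\bigl([0,\mynu]\cap{}^JW\bigr)$ via parabolic decomposition, with the hypothesis $\mymu=m(\myomega,J)$ invoked exactly once to force the $W_J$-part of any $\mychi\le\myomega$ into $[0,\mymu]$, and the subword property of Bruhat order handling both halves of the reverse inclusion.
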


\subsection{Affine Permutations}\label{intro:affineperms}
Let $S_n$ denote  the set of all bijections taking $[n]=\{1,2,\dotsc,n \}$ to itself.
We denote $w \in S_{n}$ by its one-line notation $w=[w_{1},w_{2},\dotsc , w_{n}]$ if $w$ maps $i$ to $w_{i}$.
As a Coxeter group, $S_{n}$ is generated by the adjacent transpositions $s_{i}$ interchanging $i$ with $i+1$ with relations
$s_{i}s_{j}=s_{j}s_{i}$ for $|i-j|\ge2$ and $s_{i}s_{i+1}s_{i}=s_{i+1}s_{i}s_{i+1}$ for all $1\leq i\leq n-2$.
Note, $S_{n}$ is the Weyl group of type $A$ that corresponds with the Lie group $G=GL_{n}(\mathbb{C})$.

Let $\wt{S}_n$ denote of the set of all bijections $\myomega:\Z\to\Z$ with $\myomega(i+n)=\myomega(i)+n$ for all $i\in\Z$
and $\sum_{i=1}^n\myomega(i)=\binom{n+1}{2}$.
$\wt{S}_n$ is called the \emph{affine symmetric group}, and the elements of $\wt{S}_n$ are called \emph{affine permutations}.
This definition of affine permutations  appeared in \cite[\S3.6]{Lusztig83} and was then developed in \cite{Shi}.

We can view an affine permutation in its one-line notation as the infinite string
$$\cdots\myomega_{-1}\myomega_0\myomega_1\myomega_2\cdots\myomega_n\myomega_{n+1}\cdots,$$ where $\myomega_i=\myomega(i)$.
An affine permutation is completely determined by its action on any window of $n$ consecutive indices $[\myomega_i,\myomega_{i+1},\dotsc,\myomega_{i+n-1}]$.
In particular, it is enough to record the base window $[\myomega_1,\dotsc,\myomega_n]$ to capture all the information about $\myomega$.
Sometimes however, it will be useful to write down a larger section of the one-line notation or use a different window.

We can also view an affine permutation as a matrix.
As mentioned in Section~\ref{s:schubertvarieties}, the entries of the matrix will come from $\C((z))$.
For any $\myomega=[\myomega_1,\dots,\myomega_n]\in\wt{S}_n$,
write $\myomega_i=a_i+nb_i$, where $1\le a_i\le n$ for each $1\le i\le n$.
Then $\ol{\myomega}:=[a_1,\dots,a_n]\in S_n$ and $\sum b_i=0$.
We will define the \emph{affine permutation matrix} for $\myomega$ to be the
$n\times n$ matrix $M=(m_{ij})$ with $m_{\ol{\myomega}_i,i}=z^{b_i}$ and all other entries equal to 0.
In the case where $\myomega=\ol{\myomega}\in S_n$, this reduces to the usual notion of a permutation matrix.

Given $i\not\equiv j\mod{n}$, let $t_{ij}$ denote the affine
transposition that interchanges $i$ and $j$ and leaves all $k$ not
congruent to $i$ or $j$ fixed.  Since $t_{ij}=t_{i+n,j+n}$, it
suffices to assume $1\le i\le n$ and $i<j$. 
Note that if $1\le i<j\le n$, the
above notion of transposition is the same as for the symmetric group.

As a Coxeter group, $\wt{S}_n$ is the affine Weyl group of type $A$.
It is generated by
$$S=\{s_i:=t_{i,i+1}:1\le i\le n-1\}\cup\{s_0:=t_{n,n+1}\}.$$ We will
denote the set of all affine transpositions by
$$T=\{t_{ij}:1\le i\le n, i<j, i\not\equiv j\!\!\!\!\mod{n}\}.$$ The
affine transpositions are the reflections in $\wt{S}_{n}$. 
The length function of $S_{n}$ and $\wt{S}_{n}$ can be described in
terms of one-line notation.  For a permutation $\myomega\in S_n$ one can
define an \emph{inversion} in $\myomega$ as a pair $(i,j)$ such that $i<j$
and $\myomega_i>\myomega_j$.  The length of $\myomega \in S_{n}$ is the
number of inversions in $\myomega $.  For an affine permutation, if
$\myomega_i>\myomega_j$ for some $i<j$, then we also have
$\myomega_{i+n}>\myomega_{j+n}$.  Hence, any affine permutation with a
single inversion has infinitely many inversions.  Thus, we standardize
each inversion as follows.  Define an \emph{affine inversion} as a
pair $(i,j)$ such that $1\le i \le n$, $i<j$, and $\myomega_i>\myomega_j$.
Let $\affinv(\myomega)$ denote the set of all such affine inversions.
Then we have $\ell(\myomega)=\#\affinv(\myomega)$, \cite[Proposition
8.3.1]{BB:05}.  There is also another characterization of the length
of an affine permutation due to Shi, which we won't need explicitly,
but we include for context.

\begin{prop}\textup{\cite[Lemma 4.2.2]{Shi}}
\label{length} Let $\myomega\in\wt{S}_n$.  Then
$$\ell(\myomega)=\sum_{1\le i<j\le
n}\left|\left\lfloor\frac{\myomega_j-\myomega_i}{n}\right\rfloor\right|
=\mathrm{inv}(\myomega_1,\dots,\myomega_n)+\sum_{1\le i<j\le
n}\left\lfloor\frac{|\myomega_j-\myomega_i|}{n}\right\rfloor,$$ where
$\mathrm{inv}(\myomega_1,\dots,\myomega_n)=\#\{1\le i<j\le
n:\myomega_i>\myomega_j\}$.
\end{prop}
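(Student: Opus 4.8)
The plan is to deduce the formula from the combinatorial description of length already recorded above, namely $\ell(\myomega)=\#\affinv(\myomega)$ \cite[Proposition 8.3.1]{BB:05}, by reorganizing the count of affine inversions according to the residues modulo $n$ of the two coordinates and exploiting the periodicity $\myomega_{k+n}=\myomega_k+n$. First I would fix $p$ with $1\le p\le n$ and count $N_p:=\#\{q>p:\myomega_q<\myomega_p\}$, so that $\ell(\myomega)=\sum_{p=1}^n N_p$. Partitioning the admissible indices $q$ by their residue $r\in\{1,\dots,n\}$, every such $q$ has the form $q=r+nm$ with $m\ge 0$ when $r>p$ and $m\ge 1$ when $r\le p$, and then $\myomega_q=\myomega_r+nm$, so that $\myomega_q<\myomega_p$ is equivalent to $m<(\myomega_p-\myomega_r)/n$.

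Since $p\not\equiv r\bmod n$ forces $n\nmid(\myomega_p-\myomega_r)$, counting the valid $m$ in each case is a routine floor/ceiling computation using $\lceil x\rceil=\lfloor x\rfloor+1$ for non-integral $x$: for $r>p$ one obtains $\max\bigl(0,-\lfloor(\myomega_r-\myomega_p)/n\rfloor\bigr)$, for $r<p$ one obtains $\max\bigl(0,\lfloor(\myomega_p-\myomega_r)/n\rfloor\bigr)$, and the diagonal case $r\equiv p$ contributes $0$ because $\myomega_{p+nm}=\myomega_p+nm>\myomega_p$. Writing $c^{+}=\max(0,c)$ and $c^{-}=\max(0,-c)$ for the positive and negative parts of an integer $c$, so that $c^{+}+c^{-}=|c|$, I would then reassemble the contributions by grouping the ordered pairs $(p,r)$ into unordered pairs $\{a,b\}$ with $1\le a<b\le n$: the pair $(p,r)=(a,b)$ contributes $\bigl(\lfloor(\myomega_b-\myomega_a)/n\rfloor\bigr)^{-}$ and the pair $(p,r)=(b,a)$ contributes $\bigl(\lfloor(\myomega_b-\myomega_a)/n\rfloor\bigr)^{+}$, so together they contribute exactly $\bigl|\lfloor(\myomega_b-\myomega_a)/n\rfloor\bigr|$. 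Summing over all such pairs gives the first equality of the proposition.

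For the second equality I would again use $n\nmid(\myomega_b-\myomega_a)$ to check that $\bigl|\lfloor(\myomega_b-\myomega_a)/n\rfloor\bigr|$ equals $\lfloor|\myomega_b-\myomega_a|/n\rfloor+1$ when $\myomega_a>\myomega_b$, i.e.\ when $(a,b)$ is an inversion of $(\myomega_1,\dots,\myomega_n)$, and equals $\lfloor|\myomega_b-\myomega_a|/n\rfloor$ otherwise; summing over all $1\le a<b\le n$ then splits the total into $\mathrm{inv}(\myomega_1,\dots,\myomega_n)$ plus $\sum_{1\le i<j\le n}\lfloor|\myomega_j-\myomega_i|/n\rfloor$.

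The only real obstacle is the bookkeeping in the middle step: keeping the two index ranges $m\ge 0$ and $m\ge 1$ straight, treating the boundary residue $r=p$ correctly, and matching the positive and negative parts of the single integer $\lfloor(\myomega_b-\myomega_a)/n\rfloor$ with the two orderings of $\{a,b\}$. No deeper idea is needed beyond the periodicity of $\myomega$ and elementary manipulation of floors and ceilings.
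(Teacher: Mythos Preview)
Your argument is correct: the reorganization of the affine-inversion count by residues, together with the floor/ceiling bookkeeping you describe, does yield both equalities, and the case split in the final paragraph is handled correctly since $n\nmid(\myomega_b-\myomega_a)$ for distinct residues.

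The paper, however, does not supply its own proof of this proposition; it is quoted verbatim from Shi \cite[Lemma~4.2.2]{Shi} and explicitly flagged as something ``we won't need explicitly, but we include for context.'' So there is no proof in the paper to compare against. Your derivation from the affine-inversion description $\ell(\myomega)=\#\affinv(\myomega)$ is a clean, self-contained way to recover Shi's formula and would serve well as a replacement for the bare citation.
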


In terms of the one-line description of the affine symmetric group, we
also have the following characterization of the covering relations for
Bruhat order.
\begin{prop}\textup{\cite[Lemma 2.2]{LLMS:08}}
\label{BruhatOrder} Let $\myomega\in\wt{S}_n$ and $i<j$.  Then $\myomega
t_{ij}\lessdot\myomega$ provided
\begin{enumerate}
\item $\myomega_i>\myomega_j$,
\item for all $i<k<j$, $\myomega_k\notin[\myomega_j,\myomega_i]$, and
\item either $j-i<n$ or $\myomega_i-\myomega_j<n$.
\end{enumerate}
\end{prop}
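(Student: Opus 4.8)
The plan is to turn the covering relation into a single length identity and then count affine inversions. By the periodicity $t_{ij}=t_{i+n,j+n}$ we may assume $1\le i\le n$; set $d=j-i$, so $d\ge 1$ and $n\nmid d$. I will use the standard Coxeter-group fact that for a reflection $t$ one has $\myomega t<\myomega\iff\ell(\myomega t)<\ell(\myomega)$, so that $\myomega t\lessdot\myomega$ exactly when $\ell(\myomega)-\ell(\myomega t)=1$; thus the proposition reduces to showing that hypotheses (1)--(3) force $\ell(\myomega)-\ell(\myomega t_{ij})=1$. Here length is measured by the affine inversion count $\ell(\myomega)=\#\affinv(\myomega)$ of \cite[Prop.~8.3.1]{BB:05}. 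The element $\myomega t_{ij}$ is obtained from $\myomega$ by exchanging the entries in positions $i+kn$ and $j+kn$ for all $k\in\Z$, so only affine inversions using a position congruent to $i$ or to $j$ can change, and a direct comparison of $\affinv(\myomega)$ with $\affinv(\myomega t_{ij})$ gives the identity
$$\ell(\myomega)-\ell(\myomega t_{ij})=2\,\#\bigl(\affinv(\myomega)\cap\affinv(t_{ij})\bigr)-\ell(t_{ij}),$$
so it remains to show $\#\bigl(\affinv(\myomega)\cap\affinv(t_{ij})\bigr)=\tfrac12\bigl(\ell(t_{ij})+1\bigr)$ (note $\ell(t_{ij})$ is odd, being the length of a reflection).

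Next I would describe $\affinv(t_{ij})$ explicitly. As a permutation, $t_{ij}$ raises every position $\equiv i$ by $d$, lowers every position $\equiv j$ by $d$, and fixes the rest; checking which position pairs it reverses shows $\affinv(t_{ij})$ is the disjoint union of three families: (A) the pairs $(i,\,i+e)$ with $0<e<d$ and $e\not\equiv 0,d\pmod{n}$; (B) the images of the (A)-pairs under $t_{ij}$, each $(i,i+e)$ being carried to the normalized form of $(i+e,\,j)$; and (C) the pairs $(i,\,j+en)$ with $|e|\,n<d$, i.e.\ $-L\le e\le L$ for $L=\lceil d/n\rceil-1$. The involution of $\affinv(t_{ij})$ induced by $t_{ij}$ has the single fixed point $(i,j)$; its two-element orbits are, on one hand, the pairs consisting of a family-(A) inversion $(i,i+e)$ together with the corresponding family-(B) inversion, and on the other hand the pairs $\{(i,j-en),(i,j+en)\}$ for $e=1,\dots,L$. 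Consequently $\ell(\myomega)-\ell(\myomega t_{ij})=1$ is equivalent to the conjunction of: (a) $(i,j)\in\affinv(\myomega)$; (b) every (A)--(B) orbit meets $\affinv(\myomega)$ in exactly one of its two members; and (c) every (C)--(C) orbit meets $\affinv(\myomega)$ in exactly one of its two members.

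Finally I would match (a)--(c) to the three hypotheses. Statement (a) is precisely hypothesis (1). For (b): the orbit indexed by $e$ meets $\affinv(\myomega)$ according to the two inequalities $\myomega_i>\myomega_{i+e}$ and $\myomega_{i+e}>\myomega_j$ (using $\myomega(c+n)=\myomega(c)+n$ to rewrite the second as membership of the normalized (B)-pair); given (1) at least one of these holds, and both hold precisely when $\myomega_{i+e}$ lies strictly between $\myomega_j$ and $\myomega_i$, which hypothesis (2) forbids since $i<i+e<j$. (For an intermediate position $k$ with $k\equiv i$ or $k\equiv j$, hypothesis (2) is automatic by periodicity, and no such $k$ indexes an (A)--(B) orbit.) For (c): the orbit indexed by $e$ meets $\affinv(\myomega)$ according to $\myomega_i>\myomega_j-en$ and $\myomega_i>\myomega_j+en$; by (1) the first always holds, so the orbit has weight $1$ unless $\myomega_i-\myomega_j>en$. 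Because the smallest relevant $e$ is $1$ and $\myomega_i-\myomega_j$ is never a multiple of $n$ (as $i\not\equiv j$), all (C)--(C) orbits have weight $1$ exactly when either there are none, i.e.\ $L=0$ and so $d<n$, or $\myomega_i-\myomega_j<n$ --- precisely hypothesis (3). This proves (a)--(c) and hence the proposition.

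The hardest part should be the precise description of $\affinv(t_{ij})$ and of the $t_{ij}$-induced involution --- especially the (A)$\leftrightarrow$(B) matching and the bookkeeping of normalized representatives --- together with the regime $d\ge n$, where the (C)--(C) orbits first appear and the clause ``$\myomega_i-\myomega_j<n$'' of hypothesis (3) becomes essential. The converse statements come out of the same computation: the displayed identity is symmetric in $\myomega$ and $\myomega t_{ij}$, so a failure of (1) puts $(i,j)\in\affinv(\myomega t_{ij})$ and forces $\ell(\myomega t_{ij})>\ell(\myomega)$; a failure of (2) produces an (A)--(B) orbit of weight $2$; and $d\ge n$ with $\myomega_i-\myomega_j\ge n$ produces a (C)--(C) orbit of weight $2$. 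In each of these cases $\ell(\myomega)-\ell(\myomega t_{ij})\ne 1$, so $\myomega t_{ij}\lessdot\myomega$ fails.
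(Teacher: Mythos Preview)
The paper does not prove this proposition; it is quoted from \cite[Lemma~2.2]{LLMS:08} without argument, so there is no in-paper proof to compare against.

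Your argument is correct. The identity
\[
\ell(\myomega)-\ell(\myomega t_{ij})=2\,\#\bigl(\affinv(\myomega)\cap\affinv(t_{ij})\bigr)-\ell(t_{ij})
\]
follows from the involution $\sigma$ on normalized pairs induced by $t_{ij}$ (which preserves $\affinv(t_{ij})$ and its complement), and your decomposition of $\affinv(t_{ij})$ into families (A), (B), (C) is accurate: one checks directly that no normalized pair $(a,b)$ with $a\equiv j$, $b\equiv i$ is an inversion of $t_{ij}$, so the three families exhaust $\affinv(t_{ij})$. The orbit analysis then matches the three hypotheses exactly as you describe: (1) handles the fixed point, (2) the (A)--(B) orbits (with the automatic cases $k\equiv i,j$ noted correctly), and (3) the (C)--(C) orbits, using that $\myomega_i-\myomega_j$ cannot be a multiple of $n$ because $\myomega$ permutes residue classes. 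Your proof in fact yields the full equivalence, slightly more than the one direction stated.

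One small point of presentation: the displayed identity deserves a one-line justification (the $\sigma$-involution argument above), since it is the linchpin of the whole proof and is not entirely standard in this combinatorial form for affine permutations.
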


Define the \emph{rank function} for $\myomega\in\wt{S}_n$ by
$$r_\myomega(p,q)=\#\{i\le p:\myomega_i\ge q\}.$$ Define the
\emph{difference function} for the pair $\mychi,\myomega\in\wt{S}_n$ by
$$d_{\mychi,\myomega}(p,q)=r_\myomega(p,q)-r_\mychi(p,q).$$ The difference function gives
another useful characterization of Bruhat order and generalizes the
Ehressmann criterion for Bruhat order \cite{Ehresmann}, see also
\cite{Fulton:97}.  

\begin{prop}\textup{\cite[Theorem 8.3.7]{BB:05}}
\label{length2}
For $\mychi,\myomega\in\wt{S}_n$, $\mychi\le\myomega$ if and only if $d_{\mychi,\myomega}(p,q)\ge0$ for all $p,q\in\Z$.
\end{prop}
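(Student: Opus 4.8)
The plan is to prove the two implications separately; throughout write $d := d_{\mychi,\myomega}$. For the direction $\mychi \le \myomega \Rightarrow d \ge 0$, I would first reduce to covering relations: since Bruhat order on $\wt S_n$ is ranked by length, any $\mychi \le \myomega$ is joined to $\myomega$ by a saturated chain, so it suffices to show that $r_{\mychi}(p,q) \le r_{\myomega}(p,q)$ for all $p,q$ whenever $\mychi \lessdot \myomega$, and then iterate along the chain. By Proposition~\ref{BruhatOrder} such a cover has the form $\mychi = \myomega t_{ij}$ with $1 \le i \le n$, $i<j$, and $\myomega_i > \myomega_j$. Then $\myomega$ and $\mychi$ agree off the positions $\equiv i,j \pmod n$, and at each pair $i+mn < j+mn$ the permutation $\myomega$ carries the larger of the two values at the earlier position while $\mychi$ carries the smaller. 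Comparing $r_{\myomega}(p,q)$ with $r_{\mychi}(p,q)$ term by term, the contribution of a pair $(i+mn,j+mn)$ cancels unless $i+mn \le p < j+mn$, in which case only position $i+mn$ is counted and the contribution equals $[\myomega_{i+mn}\ge q] - [\mychi_{i+mn}\ge q] \ge 0$ since $\myomega_{i+mn} \ge \mychi_{i+mn}$. Summing, $r_{\myomega} \ge r_{\mychi}$. (Only condition (1) of Proposition~\ref{BruhatOrder} is used here.)

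For the direction $d \ge 0 \Rightarrow \mychi \le \myomega$ I would induct on $\ell(\myomega)$. In the base case $\ell(\myomega)=0$ we have $\myomega = e$, the Bruhat‑minimum, so $r_e \le r_{\mychi}$ by the first direction while $r_{\mychi} \le r_e$ by hypothesis; hence $r_{\mychi} = r_e$, and since $r_{\mynu}(p,q)-r_{\mynu}(p-1,q) = [\mynu_p \ge q]$ recovers $\mynu$ from its rank function, $\mychi = \myomega$. For the inductive step, assume $\ell(\myomega)\ge 1$ and $\mychi \ne \myomega$; by the same injectivity there is a point with $d(p_0,q_0)\ge 1$. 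The heart of the matter is the claim that \emph{there is a reflection $t$ with $\myomega t \lessdot \myomega$ and $d_{\mychi,\myomega t}\ge 0$}. Granting it, $\ell(\myomega t) = \ell(\myomega)-1$, so by induction $\mychi \le \myomega t < \myomega$, which completes the argument.

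To establish the claim I would work with the following computation. If $\myomega' = \myomega t_{ij} \lessdot \myomega$ is any cover, then, using conditions (1) and (3) of Proposition~\ref{BruhatOrder},
\[
r_{\myomega'}(p,q) = r_{\myomega}(p,q) - \#\{\,m\in\Z : i+mn \le p < j+mn,\ \myomega_j+mn < q \le \myomega_i+mn\,\},
\]
and the cardinality on the right is always $0$ or $1$ (condition (3) is exactly what forces this), so $r_{\myomega'}$ coincides with $r_{\myomega}$ except on a periodic ``box'' $B_{ij}$ — the locus where that cardinality is $1$ — on which it drops by $1$. Hence $d_{\mychi,\myomega'}\ge 0$ if and only if $d \ge 1$ on $B_{ij}$, and by $(n,n)$‑periodicity this only needs checking on one fundamental piece of $B_{ij}$. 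So the claim reduces to producing a cover inversion $(i,j)$ of $\myomega$ whose box lies inside the defect region $\{(p,q):d(p,q)\ge 1\}$. I would construct one starting from $(p_0,q_0)$: using that $d(\cdot,q)$ changes by at most $1$ at each step and that $d \ge 0$, locate an inversion of $\myomega$ — for instance one with top position $\myomega^{-1}(q_0)$ — whose box is forced into $\{d\ge1\}$, and then repeatedly shrink it to an innermost one; shrinking an inversion only shrinks its box and preserves conditions (1) and (3), so when no further shrinking is possible condition (2) holds as well and we have the desired cover.

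The main obstacle is precisely this last step: exhibiting an inversion of $\myomega$ whose box sits inside $\{d\ge 1\}$, and handling the periodicity correctly. One must use the global hypothesis $d \ge 0$ together with the facts that $d$ is $(n,n)$‑periodic, $1$‑Lipschitz in each coordinate, and eventually $0$ in the relevant directions; and condition (3) of Proposition~\ref{BruhatOrder} is what keeps the translates of the box from overlapping, without which $r_{\myomega'}$ could drop by more than $1$ and the reduction would fail. An alternative route I would consider is to embed $\wt S_n$ into a large symmetric group $S_N$ compatibly with Bruhat order and with the rank functions and then quote the classical Ehresmann–Bruhat criterion, but verifying those compatibilities carries a comparable amount of bookkeeping.
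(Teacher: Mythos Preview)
The paper does not prove this proposition: it is cited without argument from \cite[Theorem~8.3.7]{BB:05}, so there is no proof here to compare yours against. Your outline is essentially the standard one found in that reference. The forward implication via saturated chains is correct as written. For the converse, the reduction to finding a cover $\myomega t_{ij}\lessdot\myomega$ whose box $B_{ij}$ lies inside $\{d\ge1\}$ is the right strategy, and you correctly identify condition~(3) of Proposition~\ref{BruhatOrder} as what keeps the periodic translates of the box disjoint (so that $r_{\myomega'}$ drops by at most one). The only part that is genuinely incomplete is the construction of the initial inversion from a point $(p_0,q_0)$ with $d(p_0,q_0)\ge1$; you flag this yourself, and it does require a careful argument---roughly, that a maximal rectangle contained in $\{d\ge1\}$ has points of $\myomega$ at its northwest and southeast corners---using the $1$-Lipschitz behaviour of $d$ and its vanishing as $p\to-\infty$ or $q\to+\infty$. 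Once that inversion is in hand, your shrinking argument goes through (termination follows from condition~(3): either $j-i<n$ or $\myomega_i-\myomega_j<n$ bounds the number of intermediate points).
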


Since $r_\myomega(p,q)=r_\myomega(p+kn,q+kn)$ for all $k\in\Z$, it suffices to check the inequalities
in Proposition~\ref{length2} for $1\le p\le n$ and $q\in\Z$.
Note that Proposition~\ref{length2} is also valid if we replace $r_\myomega$ and $d_{\mychi,\myomega}$ with
$$r^\prime_\myomega(p,q)=\#\{i\ge p:\myomega_i\le q\}$$ and $$d^\prime_{\mychi,\myomega}(p,w)=r^\prime_\myomega(p,q)-r^\prime_\mychi(p,q).$$

\begin{cor}
\label{length2cor}
We have $\mynu\le\mychi\le\myomega$ if and only if $d_{\mynu,\myomega}(p,q)-d_{\mychi,\myomega}(p,q)\ge0$ for all $p,q\in\Z$.
\end{cor}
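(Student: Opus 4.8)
The plan is to deduce this directly from Proposition~\ref{length2}, applied not to the pair $(\mychi,\myomega)$ but to the pair $(\mynu,\mychi)$, after first noting a telescoping cancellation among the rank functions. First I would expand the left-hand side of the asserted inequality straight from the definitions of $d$ and $r$: for every $p,q\in\Z$,
\[
d_{\mynu,\myomega}(p,q)-d_{\mychi,\myomega}(p,q)
=\bigl(r_\myomega(p,q)-r_\mynu(p,q)\bigr)-\bigl(r_\myomega(p,q)-r_\mychi(p,q)\bigr)
=r_\mychi(p,q)-r_\mynu(p,q)=d_{\mynu,\mychi}(p,q).
\]
The terms involving $r_\myomega$ cancel, so the hypothesis ``$d_{\mynu,\myomega}(p,q)-d_{\mychi,\myomega}(p,q)\ge 0$ for all $p,q\in\Z$'' is \emph{verbatim} the hypothesis ``$d_{\mynu,\mychi}(p,q)\ge 0$ for all $p,q\in\Z$'', a condition which in fact carries no information about $\myomega$ at all.

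Next I would invoke Proposition~\ref{length2} with $\mynu$ and $\mychi$ playing the roles of $\mychi$ and $\myomega$: it says precisely that $d_{\mynu,\mychi}(p,q)\ge 0$ for all $p,q$ if and only if $\mynu\le\mychi$. Combined with the identity above, this shows the displayed inequality condition is equivalent to $\mynu\le\mychi$. Since $\mychi\le\myomega$ is part of the chain being characterized (and is the standing situation whenever this corollary is applied), the two directions follow at once: if $\mynu\le\mychi\le\myomega$ then in particular $\mynu\le\mychi$, hence the inequality holds; conversely, the inequality yields $\mynu\le\mychi$, which together with $\mychi\le\myomega$ produces the full chain $\mynu\le\mychi\le\myomega$. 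For computations it is convenient to note, as in the discussion following Proposition~\ref{length2}, that $r_\mychi-r_\mynu$ is invariant under $(p,q)\mapsto(p+n,q+n)$, so it suffices to test the inequality for $1\le p\le n$ and $q\in\Z$.

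I do not expect a genuine obstacle here: the entire argument is the one-line cancellation together with a citation of Proposition~\ref{length2}. The only point that warrants a moment's attention is exactly the one just flagged, namely that the inequality by itself encodes $\mynu\le\mychi$ and says nothing about how $\mychi$ sits relative to $\myomega$, so the relation $\mychi\le\myomega$ must be kept in play (or noted as ambient) in order to obtain the stated chain rather than merely $\mynu\le\mychi$. If a symmetric formulation is preferred, the same cancellation goes through unchanged with the primed variant ($r'_\myomega$, $d'_{\mynu,\myomega}$) of Proposition~\ref{length2}, giving an equivalent criterion.
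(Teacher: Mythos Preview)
Your argument is correct and is exactly the immediate deduction from Proposition~\ref{length2} that the paper intends; indeed the paper gives no proof of this corollary precisely because the telescoping identity $d_{\mynu,\myomega}-d_{\mychi,\myomega}=d_{\mynu,\mychi}$ reduces it to a direct citation. Your observation that the inequality condition by itself only encodes $\mynu\le\mychi$, so that $\mychi\le\myomega$ must be taken as ambient, is a valid point about the statement as written and is consistent with how the corollary is actually used later (always in the context of a fixed pair $\mychi<\myomega$).
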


The following proposition characterizes the maximal proper parabolic
subgroups in $\wt{S}_n$.  Here we use the notation
$[a,b]:=\{i\in\Z:a\le i\le b\}$ for intervals among the integers.

\begin{prop}{\textup{\cite[Proposition 8.3.4]{BB:05}}}
\label{parabolic1}
Let $i\in[n]$ and $J=S\backslash\{s_i\}$.
Then $$(\wt{S}_n)_J=\Stab\left([i,i+n-1]\right)$$
and $$(\wt{S}_n)^J=\left\{\myomega\in\wt{S}_n:\myomega_1<\cdots<\myomega_i\text{ and }\myomega_{i+1}<\cdots<\myomega_{n+1}\right\}.$$
\end{prop}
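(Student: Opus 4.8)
The plan is to establish the two equalities separately, each time by translating into one-line notation.

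For $(\wt{S}_n)_J=\Stab([i,i+n-1])$, I would combine a one-sided inclusion with a count. Since $n\ge 3$, the Coxeter graph of $\wt{S}_n$ is an $n$-cycle, so deleting the node $s_i$ leaves a path; hence $(\wt{S}_n)_J=\langle S\setminus\{s_i\}\rangle$ is a parabolic subgroup of type $A_{n-1}$, and in particular a finite group of order $n!$. On the other hand, an affine permutation that stabilizes the window of positions $[i,i+n-1]$ as a set is determined by its restriction to that window (via $\myomega(k+n)=\myomega(k)+n$), and conversely any permutation of the window extends periodically to an element of $\wt{S}_n$ — the normalization $\sum_{k=1}^n\myomega(k)=\binom{n+1}{2}$ holding automatically, since translating a length-$n$ window changes the sum of its entries by $n$. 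Thus $\Stab([i,i+n-1])$ also has order $n!$, and it is enough to verify that each generator $s_j$ with $j\ne i$ fixes $[i,i+n-1]$ setwise. This is immediate from the description of $s_j$ as an adjacent transposition $t_{j,j+1}$ (with $s_0=t_{n,n+1}$, indices read modulo $n$): precisely one generator transposes a pair straddling the boundary of the window, one checks it is the excluded $s_i$, so the remaining $n-1$ generators all lie in $\Stab([i,i+n-1])$; equality then follows by cardinality.

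For the description of $(\wt{S}_n)^J$, I would argue directly from the defining property $W^J=\{\myomega:\ell(\myomega s)>\ell(\myomega)\text{ for all }s\in J\}$ together with the standard right-descent rule in $\wt{S}_n$: for $1\le j\le n-1$ one has $\ell(\myomega s_j)>\ell(\myomega)$ if and only if $\myomega_j<\myomega_{j+1}$, while $\ell(\myomega s_0)>\ell(\myomega)$ if and only if $\myomega_n<\myomega_{n+1}=\myomega_1+n$. Both equivalences follow from $\ell(\myomega)=\#\affinv(\myomega)$, since interchanging two adjacent positions alters the set of affine inversions in exactly one place. Imposing the resulting ascent at every index in $S\setminus\{s_i\}$ yields $\myomega_1<\cdots<\myomega_i$ (from the ascents at $s_1,\dots,s_{i-1}$) and $\myomega_{i+1}<\cdots<\myomega_n<\myomega_{n+1}$ (from the ascents at $s_{i+1},\dots,s_{n-1}$ together with the one at $s_0$), which is exactly the claimed set; conversely such a $\myomega$ visibly has an ascent at each of these indices, so it lies in $(\wt{S}_n)^J$.

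I expect the one point requiring real care to be the cyclic bookkeeping around the affine generator $s_0=t_{n,n+1}$ — in particular confirming which boundary of $[i,i+n-1]$ it straddles — and, relatedly, the degenerate extremes $i=1$ and $i=n$, where one of the two increasing runs describing $(\wt{S}_n)^J$ collapses to a single entry and one arc of generators on the subgroup side is empty. Beyond that, the argument is a routine unwinding of the definitions and facts recalled above and needs no geometric input.
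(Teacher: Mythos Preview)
The paper does not give its own proof of this proposition; it is quoted from \cite[Proposition~8.3.4]{BB:05} and used as a black box. Your plan---containment plus a cardinality count for the first equality, and the right-descent criterion for the second---is the standard elementary argument and is sound.

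You are right that the cyclic bookkeeping is the one point requiring care, and in fact carrying it out under the paper's conventions $s_j=t_{j,j+1}$ for $1\le j\le n-1$ and $s_0=t_{n,n+1}$ exposes an index shift in the first displayed equality as printed here: the unique generator whose swap straddles the boundary of the window $[i,i+n-1]$ is $s_{i-1}$ (subscript read mod $n$), not $s_i$. For instance, with $i=1$ the window is $[1,n]$ and it is $s_0=t_{n,n+1}$ that fails to stabilize it, while $s_1=t_{1,2}$ plainly does. Thus under these conventions one actually gets $\langle S\setminus\{s_i\}\rangle=\Stab([i+1,i+n])$. This discrepancy is immaterial for the paper's applications, which only use that each maximal parabolic is the setwise stabilizer of \emph{some} length-$n$ window of consecutive integers; and your derivation of the description of $(\wt{S}_n)^J$ is unaffected and in fact consistent with the corrected interval, since being increasing on positions $i+1,\dots,i+n$ is equivalent, via $\myomega_{n+k}=\myomega_k+n$, to the two runs $\myomega_1<\cdots<\myomega_i$ and $\myomega_{i+1}<\cdots<\myomega_{n+1}$.
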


\begin{cor}
\label{parabolic1cor}
If $\myomega\in\wt{S}_n$ is not in any proper parabolic subgroup, then for every $1\le i\le n$ and for every $i\le j<i+n$, there exists
some $i\le k<i+n$ such that either $\myomega_k<\myomega_j$ or $\myomega_k>\myomega_{j+n}$.
\end{cor}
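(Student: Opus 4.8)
The plan is to prove the contrapositive: I will show that if the displayed conclusion fails for $\myomega$, then $\myomega$ lies in one of the maximal proper parabolic subgroups, and hence in a proper parabolic. First I would unwind the failure of the conclusion. Since every proper parabolic subgroup is contained in a maximal one, and by Proposition~\ref{parabolic1} the maximal proper parabolics are exactly $(\wt{S}_n)_J = \Stab([i,i+n-1])$ for $J = S\setminus\{s_i\}$ with $i\in[n]$ (using $s_n = s_0$), it suffices to produce such an $i$ with $\myomega\in\Stab([i,i+n-1])$. So suppose there exist $1\le i\le n$ and $i\le j<i+n$ such that no $k$ in the window $[i,i+n-1]$ satisfies $\myomega_k<\myomega_j$ or $\myomega_k>\myomega_{j+n}$; since $\myomega_{j+n}=\myomega_j+n$, this says $\myomega_j\le\myomega_k\le\myomega_j+n$ for every $k$ in that window.

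Next I would pin down the value set $V:=\{\myomega_k : i\le k\le i+n-1\}$. Because $\myomega$ is a bijection of $\Z$ commuting with the shift by $n$, it induces a bijection of $\Z/n\Z$, and the window $[i,i+n-1]$ meets every residue class mod $n$; hence $V$ consists of exactly $n$ integers, one in each residue class. On the other hand, the $(n+1)$-element interval $[\myomega_j,\myomega_j+n]$ contains one integer from each residue class except the class of $\myomega_j$, which is hit at both endpoints $\myomega_j$ and $\myomega_j+n$. Since $V\subseteq[\myomega_j,\myomega_j+n]$ has $n$ elements of distinct residues and contains $\myomega_j$, it cannot contain $\myomega_j+n$, so $V$ picks up the unique representative of every other class in the interval; therefore $V=\{\myomega_j,\myomega_j+1,\dots,\myomega_j+n-1\}$, a block of $n$ consecutive integers. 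In other words $\myomega$ carries the window $[i,i+n-1]$ onto $[m,m+n-1]$ with $m=\myomega_j$.

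The last step is a counting argument forcing $m=i$. Using $\myomega(a+n)=\myomega(a)+n$ together with $\sum_{k=1}^n\myomega_k=\binom{n+1}{2}$, one gets $\sum_{k=i}^{i+n-1}\myomega_k=\binom{n+1}{2}+(i-1)n$, whereas $\sum_{x=m}^{m+n-1}x=\binom{n+1}{2}+(m-1)n$. Equating these two expressions for $\sum_{k\in[i,i+n-1]}\myomega_k$ yields $m=i$, so $\myomega$ stabilizes the set $[i,i+n-1]$, i.e. $\myomega\in\Stab([i,i+n-1])=(\wt{S}_n)_J$ for $J=S\setminus\{s_i\}$, a proper parabolic subgroup. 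This contradicts the hypothesis and completes the contrapositive.

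I do not expect a serious obstacle here: the argument is essentially a residue-class counting plus one sum identity. The only point needing care is the bookkeeping in the second paragraph — correctly arguing that $n$ elements of distinct residues inside an interval of $n+1$ consecutive integers, one of which is an endpoint, must form a block of $n$ consecutive integers — and making sure the normalization of the window sum (the $(i-1)n$ shift) is right so that the comparison collapses cleanly to $m=i$.
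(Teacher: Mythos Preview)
Your argument is correct. The paper states this as an immediate corollary of Proposition~\ref{parabolic1} without giving a proof, and your contrapositive derivation---pinning down the window's image as a block of $n$ consecutive integers via residue classes, then using the window-sum normalization to force $m=i$---is exactly the natural way to fill in the omitted details.
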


\subsection{Patterns in $\wt{S}_n$}

Given a permutation $p\in S_k$ and an affine permutation
$\myomega\in\wt{S}_n$, we say that \emph{$\myomega$ contains the pattern
$p$} if there exists any subsequence of integers $i_1<\cdots<i_k$ such
that the subword $\myomega_{i_1}\cdots\myomega_{i_k}$ of $\myomega$ has the
same relative order as the elements of $p$.  We say \emph{$\myomega$
avoids the pattern $p$} if no such subsequence exists in $\myomega$.
For example, if $\myomega=[8,1,3,5,4,0]\in\wt{S}_6$, then 8,1,5,0 has
the same relative order as the pattern 4231 so $\myomega$ contains
4231, however, $\myomega$ avoids 3412.  Observe that a pattern can come
from terms outside of the base window $[\myomega_1,\dots,\myomega_n]$.
Moreover, the indices corresponding to terms in the pattern can have
the same residue mod $n$.  In the previous example, $\myomega$ also has
2,8,6 as an occurrence of the pattern 132.  Choosing a subword
$\myomega_{i_1}\cdots\myomega_{i_k}$ with the same relative order as $p$
will be referred to as \emph{placing} $p$ in $\myomega$.

We will need the following result, due to Lakshmibai and Sandhya.
Note that in type $A$, smoothness is equivalent to rational smoothness for Schubert varieties \cite{Deodhar85}.
\begin{prop}{\textup{\cite[Theorem 1]{LakSan}}}
\label{laksan}
A Schubert variety $X_\myomega\subseteq\GL_n(\C)/B$ is (rationally) smooth if and only if $\myomega$ avoids the patterns 3412 and 4231.
\end{prop}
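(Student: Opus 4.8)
The plan is to deduce Proposition~\ref{laksan} from the Carrell--Peterson criterion (Proposition~\ref{CarrellPeterson}). Since $X_\myomega\subseteq\GL_n(\C)/B$ is a finite-type Schubert variety and, by \cite{Deodhar85}, rational smoothness and smoothness coincide in type $A$, it is enough to decide for which $\myomega\in S_n$ the Poincar\'e polynomial $\poin_\myomega(q)$ is palindromic, equivalently for which $\myomega$ one has $\#\scrR(\mychi,\myomega)=\ell(\myomega)-\ell(\mychi)$ for all $\mychi\le\myomega$. So two implications must be shown: (a) if $\myomega$ avoids $3412$ and $4231$, then $\poin_\myomega(q)$ is palindromic; and (b) if $\myomega$ contains $3412$ or $4231$, then $\poin_\myomega(q)$ is not palindromic.

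For (a) I would induct on $n$ using the parabolic decomposition tools of Section~\ref{sub:coxeter}. Choose a maximal parabolic $J=S\setminus\{s_i\}$, with $i$ picked appropriately from the one-line notation of $\myomega$, so that $W_J$ is a product of two strictly smaller symmetric groups, and write $\myomega=\mymu\cdot\mynu$ with $\mymu\in W_J$, $\mynu\in{}^JW$ as in Proposition~\ref{decomp}. Exploiting that $\myomega$ avoids the two patterns, one arranges that $\mymu=m(\myomega,J)$ --- so that Proposition~\ref{BilleyPostDecomp} yields $\poin_\myomega(q)=\poin_\mymu(q)\cdot{}^J\poin_\mynu(q)$ --- that $\mymu$ again avoids $3412$ and $4231$ inside the smaller group (so $\poin_\mymu(q)$ is palindromic by induction together with the fact that the Poincar\'e polynomial of a direct product is the product of the factors' Poincar\'e polynomials), and that the quotient polynomial ${}^J\poin_\mynu(q)$ is palindromic; concretely, Gasharov's analysis \cite{Gasharov98,GR2000} shows ${}^J\poin_\mynu(q)$ is itself a product of palindromic factors read off from $\mynu$. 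A product of palindromic polynomials being palindromic, the induction closes. This is the route of Section~\ref{One Direction}.

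For (b) I would split on the pattern. If $\myomega$ contains $4231$, I would compare the coefficient of $q^1$ in $\poin_\myomega(q)$, which counts simple reflections $s\le\myomega$, with the coefficient of $q^{\ell(\myomega)-1}$, which counts the coatoms of $[0,\myomega]$, i.e.\ the reflections $t$ with $\myomega t\lessdot\myomega$; using the finite-type analogs of Propositions~\ref{BruhatOrder} and~\ref{length2}, one shows a $4231$ occurrence forces strictly more coatoms than simple reflections, so $\poin_\myomega$ fails to be palindromic already at rank $1$. If $\myomega$ contains $3412$ but avoids $4231$, that rank-$1$ comparison can balance out, so instead I would produce an explicit $\mychi\le\myomega$ violating part~(3) of Proposition~\ref{CarrellPeterson}: take $\mychi$ to be a small Bruhat-order modification of the element realizing the $3412$ occurrence, and count the reflections $t$ with $\mychi<\mychi t\le\myomega$ via the rank-function criterion and its monotone refinement (Corollary~\ref{length2cor}), showing $\#\scrR(\mychi,\myomega)>\ell(\myomega)-\ell(\mychi)$.

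The hard part is the $3412$ case of (b). Unlike $4231$, the loss of palindromicity forced by a $3412$ is invisible at the bottom and top of the Bruhat interval, so no global count of atoms or coatoms detects it; one must pin down the right witness $\mychi$ near the pattern occurrence and then control $\#\scrR(\mychi,\myomega)$ by a delicate Bruhat-order computation, tracking how the two nested descents in $3412$ together create one extra reflection. Making the choice of $\mychi$ uniform enough that this count behaves correctly across all configurations of the surrounding entries is exactly the obstacle --- which is why the affine version in Section~\ref{s:3412} is developed through the dedicated formalism of affine Bruhat pictures.
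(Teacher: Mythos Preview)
The paper does not prove Proposition~\ref{laksan}; it is quoted from \cite{LakSan} as an external input (note the introductory sentence ``We will need the following result, due to Lakshmibai and Sandhya'' and the absence of any proof environment). So there is no ``paper's own proof'' to compare your proposal against.

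That said, your outline is a coherent combinatorial proof of the finite-type statement, and it is in fact the specialization to $S_n\subset\wt{S}_n$ of exactly what the paper does in Sections~\ref{One Direction}, \ref{Converse}, and \ref{s:3412} for the affine case: Gasharov-style factoring for the pattern-avoiding direction, a coatom-versus-atom count for $4231$, and a local witness $\mychi$ with $\#\scrR(\mychi,\myomega)>\ell(\myomega)-\ell(\mychi)$ for $3412$. You are clearly aware of this, since you cite those sections yourself. One historical remark worth making is that this combinatorial route is \emph{not} the argument in \cite{LakSan}; Lakshmibai--Sandhya work via standard monomial theory and an explicit tangent-space dimension count, and the palindromicity/Bruhat-interval approach you sketch is due to later authors (Gasharov \cite{Gasharov98} for sufficiency, and the $\scrR(\mychi,\myomega)$ analysis in \cite{BilleyWarrington} for necessity). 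So your proposal is correct in spirit but is a different proof from the one in the cited reference, and the paper itself supplies neither.
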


\begin{cor}
\label{propercase}
Let $J=S\backslash\{i\}$ and suppose $\myomega\in(\wt{S}_n)_J$.
If $\myomega$ contains the pattern 4231 or 3412, then there is an occurrence of that pattern at indices $i\le i_1<i_2<i_3<i_4<i+n$,
so that $\myomega$ is not palindromic.
\end{cor}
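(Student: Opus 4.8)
The plan is to reduce to the classical Lakshmibai--Sandhya criterion (Proposition~\ref{laksan}): when $\myomega$ lies in the maximal parabolic subgroup $(\wt{S}_n)_J=\Stab([i,i+n-1])$, I will show that every occurrence of $4231$ or $3412$ in $\myomega$ already occurs inside a single length-$n$ window of its one-line notation, and then transfer non-palindromicity from $S_n$ back to $(\wt{S}_n)_J$.

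First I would record the ``block'' structure of $\myomega$. Since $\myomega$ stabilizes $[i,i+n-1]$ and $\myomega(x+n)=\myomega(x)+n$, it stabilizes each interval $[i+kn,\,i+(k+1)n-1]$, $k\in\Z$; hence in one-line notation the entries in positions $[i+kn,\,i+(k+1)n-1]$ are precisely the values $\{i+kn,\dots,i+(k+1)n-1\}$, arranged in the same relative order as the base window $[\myomega_i,\dots,\myomega_{i+n-1}]$. In particular, if $j$ lies in an earlier such block than $j'$, then $j<j'$ and $\myomega_j<\myomega_{j'}$: across a block boundary the one-line notation is strictly increasing.

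The crux is a ``folding'' argument, using that $4231$ and $3412$ are \emph{plus-indecomposable}, i.e.\ no proper nonempty initial set of positions has value-set an initial segment $\{1,\dots,t\}$ of $\{1,2,3,4\}$ (the prefixes of $4231$ have value-sets $\{4\}$, $\{4,2\}$, $\{4,2,3\}$, those of $3412$ are $\{3\}$, $\{3,4\}$, $\{3,4,1\}$, and none equals $\{1\}$, $\{1,2\}$, or $\{1,2,3\}$). Suppose $\myomega$ contains such a pattern $\pi$ at positions $j_1<j_2<j_3<j_4$, and that these positions meet at least two blocks; let $t$, with $1\le t\le 3$, be the number of them lying in the earliest block that is met. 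By the block monotonicity above, $j_1,\dots,j_t$ carry the $t$ smallest of the four values and $j_{t+1},\dots,j_4$ the remaining $4-t$, so together with $j_1<\cdots<j_4$ this realizes $\pi$ as a direct sum $\pi|_{\{1,\dots,t\}}\oplus\pi|_{\{t+1,\dots,4\}}$, contradicting plus-indecomposability. Hence all four positions lie in a single block $[i+kn,\,i+(k+1)n-1]$, and translating by $-kn$ using $n$-periodicity gives an occurrence of $\pi$ at indices $i\le i_1<i_2<i_3<i_4\le i+n-1<i+n$, as claimed.

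To conclude, note that the base-window permutation $\sigma\in S_n$ defined by $\sigma_\ell=\myomega_{i+\ell-1}-i+1$ then contains $4231$ or $3412$, so $X_\sigma\subseteq\GL_n(\C)/B$ is not rationally smooth by Proposition~\ref{laksan}, whence $\poin_\sigma(q)$ is not palindromic by Proposition~\ref{CarrellPeterson} applied to $W=S_n$. Since the lower Bruhat interval of $\myomega\in(\wt{S}_n)_J$ is contained in $(\wt{S}_n)_J$, and both the length function and the Bruhat order of $\wt{S}_n$ restrict to those of this parabolic subgroup --- which, being generated by $n-1$ of the $n$ affine simple reflections, is a Coxeter group of type $A_{n-1}$ in which $\myomega$ corresponds to $\sigma$ --- we get $\poin_\myomega(q)=\poin_\sigma(q)$, so $\poin_\myomega$ is not palindromic. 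The only genuinely nontrivial step is the folding argument; the subtlety there is to check that the blockwise pieces of a pattern occurrence really do form a direct sum, which is precisely what the block monotonicity of the second paragraph guarantees.
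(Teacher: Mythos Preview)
Your proof is correct and follows the same overall structure as the paper's: confine the pattern to a single window, then transfer to $S_n$ via $(\wt{S}_n)_J\cong S_n$ and invoke Lakshmibai--Sandhya together with Carrell--Peterson. The one place you differ is in the confinement step: you use a plus-indecomposability/folding argument, whereas the paper simply observes that for both $4231$ and $3412$ the last entry is smaller than the first ($p_4<p_1$), so once $i_1$ is translated into $[i,i+n)$ the block structure forces $w_{i_4}<w_{i_1}$ to put $i_4<i+n$ as well, and then $i_2,i_3$ are sandwiched in between. Your argument has the virtue of applying to any plus-indecomposable pattern, while the paper's is a one-line observation tailored to these two patterns; both are valid and the remainder of your argument (that $[0,w]\subset(\wt{S}_n)_J$ and that Bruhat order and length restrict correctly, giving $\poin_w=\poin_\sigma$) matches the paper's reasoning.
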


\begin{proof}
Suppose the pattern, $p$, occurs at indices $i_1<i_2<i_3<i_4$ in $\myomega$.
By periodicity of $\myomega$ we may assume $i\le i_1<i+n$.
Since $\myomega\in(\wt{S}_n)_J$, Proposition~\ref{parabolic1} implies $\{\myomega_i,\dots,\myomega_{i+n-1}\}=\{i,\dots,i+n-1\}$.
Hence,  $\myomega_j>\myomega_{i_1}$ for all $j\ge i+n$.
Since $\myomega_{i_4}<\myomega_{i_1}$, we must have $i_4<i+n$.
Recall that $(\wt{S}_n)_J\cong S_n$.
Since $p$ is contained in the window $[\myomega_i,\dotsc,\myomega_{i+n-1}]$,
the image of $\myomega$ under this isomorphism will also contain $p$.
Hence, by Proposition~\ref{CarrellPeterson} and Proposition~\ref{laksan}, $\myomega$ is not palindromic.
\end{proof}

\subsection{Twisted Spiral Permutations}
\label{s:spirals}
For $a,b\in\Z$, define
\begin{equation}\label{e:cycles}
c(a,b)=\begin{cases}
s_as_{a+1}s_{a+2}\cdots s_{a+b-1},&\text{ if }b>0,\\
s_as_{a-1}s_{a-2}\cdots s_{a+b+1},&\text{ if }b<0,\\
1,&\text{ if }b=0,
\end{cases}
\end{equation}
where all of the subscripts are taken modulo $n$.
By definition, we have $c(a,1)=c(a,-1)=s_a$, and $\ell(c(a,b))=|b|$.
For any $k\in\Z$ with $k\not=0$ and any $1\le i\le n$, let 
\begin{equation}\label{e:spirals}
\myomega^{(i,k)}=\myomega_0^{J_i}c(i,k(n-1))\in\wt{S}_n,
\end{equation}
where $\myomega_0^{J_i}$ is the unique longest element in the parabolic subgroup generated by $J_i=S\backslash\{i\}$.
For $k \neq 0$, we call $\myomega^{(i,k)}$ a \emph{twisted spiral permutation},
since its reduced expression is obtained by spiraling around the Coxeter graph for $\wt{S}_n$ and then twisting by $\myomega_0^{J_i}$.
We could also define the twisted spiral permutations by using right cosets and twisting on the right.
This definition is equivalent because of the formula
\begin{equation}\label{e:spiralswap}
\myomega_0^{J_i}c(i,k(n-1))=c(i-kn\pm1,k(n-1))\myomega_0^{J_{i-k}},
\end{equation}
where we use $+$ if and only if $k>0$.  Thus, the set of twisted
spiral permutations is closed under taking inverses.

By a result of Mitchell~\cite{Mitchell}, the Poincar\'{e} polynomial
for $c(i,k(n-1))$ in the quotient ${}^{J_{i}}W$ is palindromic.  The
quotient ${}^{J_{i}}W$ corresponds with the affine Grassmannian of
type $A$.   To be specific, we need the following definitions for
$q$-\textit{factorial} and $q$-\textit{binomial} coefficients.  For
$m\in\N$, let
\begin{equation}\label{e:q-binomial}
(m!)_q=\prod_{k=1}^m\frac{1-q^k}{1-q}\hspace{.1in}\text{ and }\hspace{.1in}\binom{k}{j}_q=\frac{(k!)_q}{(j!)_q((k-j)!)_q}
\end{equation}
be the $q$-analogues of $m!$ and $\binom{k}{j}$.

\begin{prop}\label{p:mitchell}{\textup{\cite[Prop. 2.4 and 3.2]{Mitchell}}}
Let $J=S\backslash\{i\}$ and let $W_{J}$ be the corresponding maximal
parabolic subgroup of $W=\wt{S}_{n}$.  Then, the Poincar\'{e} for the Schubert variety $X_{c(i,k(n-1))}$ in the affine Grassmannian is
$${}^{J}P_{c(i,k(n-1))} =  \binom{k+n-1}{k}_q.$$
\end{prop}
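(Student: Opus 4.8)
The plan is to compute the Poincar\'e polynomial ${}^{J}P_{c(i,k(n-1))}(q)$ by identifying the Bruhat order ideal below $c(i,k(n-1))$ inside the quotient ${}^{J}W$ with a well-understood combinatorial poset. The quotient ${}^{J}W$ for $J = S \setminus \{s_i\}$ corresponds to the affine Grassmannian of type $A$, and its minimal-length coset representatives can be described by their one-line notation as in Proposition~\ref{parabolic1}: they are exactly the affine permutations with $\myomega_1 < \cdots < \myomega_i$ and $\myomega_{i+1} < \cdots < \myomega_{n+1}$ (for the left-quotient version one transposes the roles appropriately). Under this description, the element $c(i, k(n-1))$ is the specific representative obtained by spiraling $k$ full times around the affine Coxeter diagram starting at node $i$; since $\ell(c(i,k(n-1))) = k(n-1)$, its length is known, and one can write down its one-line notation explicitly as a "staircase" shift. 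The first step, then, is to establish this explicit one-line form and to describe $\scrR(\mychi, \myomega)$ or equivalently the covering relations in ${}^{J}W$ below it using Proposition~\ref{BruhatOrder} and the difference-function criterion of Proposition~\ref{length2} and Corollary~\ref{length2cor}.

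Next I would set up the bijection between the order ideal $[0, c(i,k(n-1))] \cap {}^{J}W$ and the set of partitions fitting inside a $k \times (n-1)$ rectangle — this is the combinatorial content behind the appearance of the Gaussian binomial $\binom{k+n-1}{k}_q$, since it is standard that $\sum_{\lambda \subseteq k \times (n-1)} q^{|\lambda|} = \binom{k+n-1}{k}_q$. The affine Grassmannian elements can be encoded by their sets of "jumps" or by abacus/core-partition data, and the condition $\mynu \le c(i,k(n-1))$ translates into the partition being contained in the $k \times (n-1)$ rectangle; the statistic $\ell(\mynu)$ becomes $|\lambda|$. Alternatively, one could sidestep an explicit bijection and instead verify that ${}^{J}P_{c(i,k(n-1))}(q)$ satisfies the same recursion as $\binom{k+n-1}{k}_q$ by peeling off one simple generator at a time, using Proposition~\ref{parabolicredexp} (every reduced word for a minimal coset representative begins with $s_i$) together with the standard factorization of parabolic Poincar\'e polynomials. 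Since the statement is quoted verbatim from Mitchell~\cite{Mitchell}, the cleanest write-up is to cite that computation directly and merely indicate how the quotient in question matches the affine Grassmannian setting of that paper.

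The main obstacle will be bookkeeping: pinning down precisely which minimal-length coset representative the spiral word $c(i,k(n-1)) = s_i s_{i+1} \cdots$ (indices mod $n$) corresponds to in one-line notation, and checking that the Bruhat interval below it is exactly the rectangle of partitions rather than some truncation thereof. One must be careful that the element indeed "uses up" the full $k(n-1)$ steps to reach the far corner of the $k \times (n-1)$ box, so that no boundary conditions shrink the ideal. Once the order ideal is identified with $\{\lambda : \lambda \subseteq k \times (n-1)\}$ graded by size, the identity ${}^{J}P_{c(i,k(n-1))}(q) = \binom{k+n-1}{k}_q$ follows from the classical evaluation of the Gaussian binomial coefficient as a generating function over partitions in a rectangle, completing the proof.
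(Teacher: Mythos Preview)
The paper does not prove this proposition at all: it is stated with the attribution \cite[Prop.~2.4 and 3.2]{Mitchell} and is used as a black box. Your final suggestion---to cite Mitchell directly and note that the quotient ${}^{J}W$ is the affine Grassmannian---is exactly what the paper does.

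Your sketched argument (identifying $[0,c(i,k(n-1))]\cap{}^{J}W$ with partitions in a $k\times(n-1)$ rectangle and invoking the standard $q$-binomial identity) is a reasonable outline of how Mitchell's result is obtained, and would work as a self-contained proof. But since the paper treats the proposition as a quoted result, there is no paper-proof to compare against beyond the citation itself.
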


Proposition~\ref{BilleyPostDecomp} then shows that we may lift each
$c(i,k(n-1))$ to a palindromic element in $W$ by left multiplying by
$\myomega_0^{J_i}$.  Hence the Poincar\'{e} polynomial for
$\myomega^{(i,k)}$ is palindromic.  However, each twisted spiral
permutation contains the pattern 3412.  We will see in
Section~\ref{s:3412} that elements of this infinite family,
$\left\{\myomega^{(i,k)}:k\in\Z\backslash\{0\},1\le i\le n\right\}$,
are the only palindromic permutations containing 3412.  

\begin{remark}\label{r:twisted.spiral}
For $k\geq 1$, we note that $X_{c(i,k(n-1))}$ is not a smooth variety
in the affine Grassmannian by \cite[Thm 1.1]{BilleyMitchell} since it
is not a closed parabolic orbit.  Hence, we see $X_{\myomega^{(i,k)}}$
cannot be smooth in the affine flag variety $\wt{G}/\wt{B}$. 
\end{remark}

\section{Factoring Poincar\'{e} Polynomials}
\label{One Direction}

The goal of this section is to prove the first direction of the main
theorem as stated below in Theorem~\ref{thm:onedirection}.  We will
use an adaptation of the proof for the non-affine case given by
Gasharov in \cite{Gasharov98}.  In particular, we show that if
$\myomega$ avoids 3412 and 4231, we may repeatedly factor off
palindromic terms from $\poin_\myomega(q)$.  For $\myomega\in S_n$,
Gasharov shows that the factors of the Poincar\'e polynomial for
(rationally) smooth elements are all of the form $(1+q+\cdots+q^k)$.
However, for $\myomega\in\wt{S}_n$, we will see that the factors
sometimes end up being the more general $q$-binomial coefficients
instead.

\begin{thm}
\label{thm:onedirection}
Let $\myomega\in\wt{S}_n$ be an affine permutation that avoids the patterns 3412 and 4231.
Then $P_\myomega(q)$ is palindromic.
\end{thm}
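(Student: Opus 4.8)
The plan is to adapt Gasharov's inductive factorization argument to the affine setting, using the parabolic decomposition tools assembled in Section~\ref{s:background}. Given $\myomega\in\wt{S}_n$ avoiding $3412$ and $4231$, I would first dispose of the trivial case $\myomega=1$ and otherwise choose a generator $s_i$ such that $\myomega$ is \emph{not} a minimal length coset representative for $(\wt{S}_n)_{J}\myomega$ where $J=S\backslash\{s_i\}$; concretely, pick $s_i$ to be the first letter of some reduced expression for $\myomega$, so that by Lemma~\ref{parabolicredexp} we may set $J=S\backslash\{s_i\}$ and write the parabolic decomposition $\myomega=\mymu\cdot\mynu$ with $\mymu\in(\wt{S}_n)_J$ and $\mynu\in{}^J\wt{S}_n$, where $\mymu=m(\myomega,J)\neq 1$. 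By Proposition~\ref{BilleyPostDecomp} this gives
$$\poin_\myomega(q)=\poin_\mymu(q)\cdot{}^J\poin_\mynu(q).$$
The strategy is then: (a) show $\poin_\mymu(q)$ is palindromic, and (b) show ${}^J\poin_\mynu(q)$ is palindromic, whence the product is palindromic and we are done. For (a), note $\mymu=m(\myomega,J)$ lies in $(\wt{S}_n)_J\cong S_n$ (Proposition~\ref{parabolic1}), and since $\mymu\le\myomega$ in Bruhat order and pattern containment is inherited by Bruhat intervals in the relevant sense—more precisely, $\mymu$ is a sub-pattern of $\myomega$ because one can realize $\mymu$'s one-line notation inside a window of $\myomega$—we get that $\mymu$ avoids $3412$ and $4231$ as an ordinary permutation, so by Proposition~\ref{laksan} and Proposition~\ref{CarrellPeterson}, $\poin_\mymu(q)$ is palindromic.

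The real work is part (b): showing ${}^J\poin_\mynu(q)$ is palindromic for the quotient representative $\mynu\in{}^J\wt{S}_n$. Here I would set up an induction on $\ell(\myomega)$ (equivalently on $\ell(\mynu)$, since $\ell(\myomega)=\ell(\mymu)+\ell(\mynu)$ and $\ell(\mymu)\ge 1$). The induction hypothesis is that the theorem holds for all pattern-avoiding affine permutations of smaller length, and I expect one needs a slightly stronger statement handling the parabolic-quotient Poincar\'e polynomials directly. The key structural input is that $\mynu$, being a minimal-length left coset representative avoiding $3412$ and $4231$, has a very restricted one-line notation—its one-line notation on the relevant window is weakly increasing except at a controlled location dictated by the missing generator—so that one can identify a further parabolic decomposition (or a recursive "peeling" of a spiral-type segment) expressing ${}^J\poin_\mynu(q)$ as a product of a $q$-binomial coefficient $\binom{k+n-1}{k}_q$ (palindromic by Proposition~\ref{p:mitchell}) or a simpler $(1+q+\cdots+q^m)$ factor, times the analogous Poincar\'e polynomial of a shorter pattern-avoiding element. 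The $q$-binomial coefficients are manifestly palindromic since each factor $(1-q^k)/(1-q)=1+q+\cdots+q^{k-1}$ is palindromic and products of palindromic polynomials are palindromic; this is exactly where the affine case genuinely differs from Gasharov's, where only the $(1+q+\cdots+q^k)$ factors appear.

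The main obstacle is establishing that, under the two-pattern avoidance hypothesis, the quotient element $\mynu$ really does admit such a clean recursive factorization—i.e., controlling the combinatorics of ${}^J\wt{S}_n$ well enough to pin down $\mynu$'s structure and extract a palindromic factor at each step. This requires a careful case analysis of where $\mynu$ can "fail to be increasing" given that it avoids $3412$ and $4231$: avoiding $3412$ forces the large values to cluster in a spiral-like pattern, and avoiding $4231$ rules out the remaining obstructions, leaving essentially the configurations whose Poincar\'e polynomials are $q$-binomials. I would organize this as a sequence of lemmas: first a normal-form lemma for two-pattern-avoiding elements of ${}^J\wt{S}_n$, then a lemma computing ${}^J\poin_\mynu(q)$ for those normal forms (invoking Proposition~\ref{p:mitchell} and Proposition~\ref{BilleyPostDecomp} repeatedly), and finally assembling the induction. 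Everything downstream—palindromicity of products, the $q$-binomial identity—is routine once the normal form is in hand.
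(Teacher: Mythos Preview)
Your high-level plan---factor $\poin_\myomega(q)$ via Proposition~\ref{BilleyPostDecomp} and induct---matches the paper's, but there is a genuine gap at the very first step. You write the parabolic decomposition $\myomega=\mymu\cdot\mynu$ and assert $\mymu=m(\myomega,J)$, but Proposition~\ref{BilleyPostDecomp} requires this as a \emph{hypothesis}, and for an arbitrarily chosen $J$ it simply fails. Concretely, take $\myomega=s_1s_3s_2=[2,4,1,3]\in S_4$ (which avoids both patterns) and your $J=\{s_2,s_3\}$: the parabolic decomposition is $\myomega=s_3\cdot(s_1s_2)$, so $\mymu=s_3$, yet $m(\myomega,J)=s_3s_2$. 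The factorization then breaks: $\poin_{s_3}(q)\cdot{}^J\poin_{s_1s_2}(q)=(1+q)(1+q+q^2)=1+2q+2q^2+q^3$, while $\poin_\myomega(q)=1+3q+3q^2+q^3$. So the entire argument collapses before you reach part~(b).

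This is exactly the obstacle the paper's machinery of \emph{factoring subwords} is built to overcome. Rather than pick $J$ first and hope, the paper extracts from the one-line notation of $\myomega$ (or $\myomega^{-1}$) a specific decreasing consecutive block $x=x_1\cdots x_k$ with a designated pivot, and from it reads off both $J=S\setminus\{s_d\}$ and a Grassmannian permutation $\mysigma\in{}^J\wt{S}_n$ with a single descent. The point of Lemmas~\ref{reducedstart}--\ref{max} is precisely to verify that for \emph{this} $J$ the Billey--Postnikov condition $\Psi(\myomega)=m(\myomega,J)$ holds, and Lemma~\ref{sigmapalindromic} then gives ${}^J\poin_\mysigma(q)=\binom{k}{j}_q$ directly---no recursion on the quotient side is needed at all. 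The induction lives entirely on $\myomega'=\Psi(\myomega)\in(\wt{S}_n)_J$, and Lemma~\ref{recursiveStep} shows $\myomega'$ again avoids both patterns (this is a nontrivial case analysis, not the ``$\mymu$ is a sub-pattern of $\myomega$'' heuristic you invoke, which is not correct in general: the parabolic factor is obtained by permuting positions of $\myomega$, not by selecting a subsequence).

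In short, your part~(a) is unjustified and your part~(b) is where you have pushed all the difficulty, whereas the paper arranges things so that the quotient factor is trivial (a single $q$-binomial from a finite Grassmannian, not Proposition~\ref{p:mitchell}) and the parabolic factor carries the induction. The substantive work you are missing is the construction of the factoring subword (Definition~\ref{factsubword}, Lemmas~\ref{Gasharov}--\ref{xexists}) and the verification of Lemma~\ref{max}.
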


Given any finite ordered sequence of integers $t=t_1t_2\dots t_p$,
define the \emph{subsequence of left to right maxima of $t$}, denoted
$a=a_1 a_2\dots a_\ell$, as follows.  Set $a_1=t_1$ and $i_{1}=1$.  Inductively, let
$i_{j+1}$ be the smallest index greater than $i_j$ with
$t_{i_{j+1}}>t_k$ for all $k<i_{j+1}$.  Then set
$a_{j+1}=t_{i_{j+1}}$.  Note that by its construction
$t_1=a_1<\cdots<a_\ell$.  This subsequence can be obtained by reading
$t$ from left to right and recording the value each time a new maximum
is obtained.  Similarly, we can define the \emph{subsequence of right
to left maxima} of $t$, denoted $b=b_1b_2\cdots b_\ell$, by reading $t$
right to left.  Note that by its construction $b_1>\cdots>b_\ell=t_p$.

Let $\myomega\in\wt{S}_n$ and $\alpha\in\Z$.  Define the \emph{subword
of $\alpha$-inversions} of $\myomega$ by
$v^{(\alpha)}=\myomega_{i_0}\myomega_{i_1}\cdots\myomega_{i_k}$, where
$i_{0}=\alpha$ and $i_{1}<i_{2}<\dotsb <i_{k}$ indexes the subsequence
of all values to the right of $\alpha$ forming an inversion with
$\myomega_\alpha$.  Call $v^{(\alpha)}$ \emph{uninterrupted} if
$v^{(\alpha)}$ is a consecutive subword of $\myomega$.  Otherwise, we
say $v^{(\alpha)}$ is \emph{interrupted}.  We say $\myomega_\beta$
\emph{interrupts} $v^{(\alpha)}$ if $\alpha<\beta<i_k$ and
$\myomega_\beta>\myomega_\alpha$ so that $\myomega_\beta$ is not in
$v^{(\alpha)}$.

Throughout the rest of this section, assume $\myomega\in\wt{S}_n$ and $\myomega$ is
not the identity element.  Fix $\alpha\in\Z$ to be the index such that
$\myomega_\alpha$ is the largest element in the window
$[\myomega_1,\dots,\myomega_n]$ that has a non-trivial subword of
$\alpha$-inversions.  Such an $\alpha$ exists since $\myomega$ is not
the identity permutation.
Let $v=v^{(\alpha)}=\myomega_{i_0}\myomega_{i_1}\cdots\myomega_{i_p}$ be the subword of $\alpha$-inversions of $\myomega$,
and let $u=u_{1} \cdots  u_{\ell}$ be the subsequence of right to left maxima in $v$.
Let $v\backslash u$ denote all terms of $v$, which are not in $u$.
Note $u_1=v_1=\myomega_\alpha$ and $u_\ell=\myomega_{i_p}$ by construction.

Next, we prove the affine analogue of the main lemma Gasharov uses in his proof of the non-affine version of Theorem~\ref{thm:onedirection}.

\begin{lemma}
\label{Gasharov}
If $\myomega$ avoids the patterns 4231 and 3412 and $v$ is not decreasing (and hence not equal to $u$),
then $u=\myomega_\alpha(\myomega_\alpha-1)\cdots(\myomega_\alpha-m)$ for some $m$.
\end{lemma}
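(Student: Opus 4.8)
The plan is to understand the structure forced by avoiding $4231$ and $3412$ on the subword $v = v^{(\alpha)}$ of $\alpha$-inversions, and to show the subsequence $u$ of right-to-left maxima of $v$ must be the consecutive descending run $\myomega_\alpha, \myomega_\alpha - 1, \dots, \myomega_\alpha - m$. First I would fix notation: write $v = v_0 v_1 \cdots v_p$ with $v_0 = \myomega_\alpha = u_1$, every $v_j < v_0$ for $j \geq 1$, and recall $u_1 > u_2 > \cdots > u_\ell$ with $u_\ell = v_p = \myomega_{i_p}$. Since $v$ is not decreasing, $\ell \geq 2$ (otherwise $u = v$). The goal is to prove two things: (a) the values $u_1, u_2, \dots, u_\ell$ form a set of \emph{consecutive} integers, i.e. $u_{j+1} = u_j - 1$ for each $j$, and hence $u = \myomega_\alpha(\myomega_\alpha - 1)\cdots(\myomega_\alpha - m)$ with $m = \ell - 1$.

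The main engine is the pattern-avoidance hypothesis applied to cleverly chosen four-term subsequences that include terms of $u$, terms of $v \setminus u$, and — crucially — terms of $\myomega$ lying strictly to the right of $i_p$. The key point: because $\myomega_\alpha$ is chosen to be the \emph{largest} element in the base window having a nontrivial subword of $\alpha$-inversions, any value larger than $\myomega_\alpha$ appearing to the right of $\alpha$ must itself have only a trivial (empty) inversion subword to its right, which sharply constrains what can appear after position $i_p$. I would argue that if some $u_{j+1} < u_j - 1$, then there is an integer value $c$ with $u_{j+1} < c < u_j$ not appearing anywhere as a $v_k$; I would locate where $c$ (or rather the entry of $\myomega$ taking a value in the gap, using that $\myomega$ is a bijection on $\Z$ and periodicity) actually sits relative to positions $\alpha, i_j, i_{j+1}, i_p$, and in each case extract a forbidden $3412$ or $4231$. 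Concretely: a value strictly between $u_{j+1}$ and $u_j$ that sits between positions $i_j$ and $i_{j+1}$ is not in $v$ only if it exceeds $\myomega_\alpha$ — impossible since it is $< u_j \le \myomega_\alpha$; so it lies outside $[i_j, i_{j+1}]$, and combining it with $\myomega_\alpha$, $u_j$ (or $u_{j+1}$) and a term of $v \setminus u$ between two right-to-left maxima produces the pattern.

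The detailed casework splits on whether such an intermediate-valued entry lies to the left of $\alpha$, strictly between $i_j$ and $i_{j+1}$, strictly between $i_{j+1}$ and $i_p$, or to the right of $i_p$. In the left-of-$\alpha$ case I would use that $v$ is not decreasing to find an ascent $v_k < v_{k+1}$ among the non-maxima, giving four positions realizing $3412$ with the left-of-$\alpha$ entry; in the right-of-$i_p$ cases I would invoke the maximality of $\myomega_\alpha$ (any entry $> \myomega_\alpha$ to the right of $i_p$ has no further inversions, forcing everything after it to be even larger, contradicting periodicity or producing $4231$ with $\myomega_\alpha$ at the end) to rule out gaps; and for entries genuinely interleaved with the $u_j$'s I would pair them with $\myomega_\alpha$ at the front and a non-maximum $v_k$ to hit $4231$ (large, small, medium, large–no, rather: $\myomega_\alpha$ is biggest, then the gap value, then a $v \setminus u$ term, then a later $u$-term — checking relative orders gives $3412$ or $4231$).

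The hard part will be the bookkeeping in the final case — ruling out a value in the gap $(u_{j+1}, u_j)$ that sits among the positions of $v$ between $i_j$ and $i_p$ but is \emph{not} itself one of the $v_k$ (i.e. it fails to form an inversion with $\myomega_\alpha$, which is automatic once it is smaller, so actually it \emph{must} be some $v_k$) — so the real subtlety is that such an interleaved value \emph{is} forced to be in $v$, hence is some $v_k$ with $u_{j+1} < v_k < u_j$, and then $v_k$ together with the right-to-left maxima $u_j$ before it and $u_{j+1}$ after it, plus $\myomega_\alpha$ at the front, must avoid both patterns. Here $u_j > v_k$ and $u_j$ comes before $v_k$ which comes before $u_{j+1} < v_k$: the pattern of $(\myomega_\alpha, u_j, v_k, u_{j+1})$ is $(4,3,2,1)$-ish in value but we need $v_k$ to also not be a right-to-left maximum, meaning some later $v_m > v_k$; chasing this $v_m$ and its position relative to $i_{j+1}$ yields $3412$ or $4231$ and closes the argument. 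I would organize this as: first prove no $v_k$ lies strictly between two consecutive right-to-left maxima in value (a "no gaps" lemma, the crux), then immediately conclude $u$ consists of consecutive integers starting at $\myomega_\alpha$, which is exactly the claimed form.
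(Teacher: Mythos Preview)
Your gap-by-gap strategy is more laborious than necessary, and one of your cases does not close as written.  The ``right of $i_p$'' case is vacuous: by definition $i_p$ is the last index with $\myomega_{i_p}<\myomega_\alpha$, so every entry to the right of $i_p$ exceeds $\myomega_\alpha$ and cannot have value in a gap $(u_{j+1},u_j)$.  The ``$c$ lies in $v$'' case does work: such a $c$ is not a right-to-left maximum, so some $d>c$ lies to its right in $v$; since $d>c>u_{j+1}$, $d$ must sit left of $u_{j+1}$'s position, and $\myomega_\alpha,c,d,u_{j+1}$ is a $4231$.

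The real problem is your ``$c$ lies left of $\alpha$'' case.  You propose to use an arbitrary ascent $v_k<v_{k+1}$ in $v$ to build a $3412$ with $c$ and $\myomega_\alpha$, but for that you need $v_{k+1}<c$, which is not guaranteed: the ascent could involve values larger than $c$ (for instance $v_{k+1}$ could equal $u_j$ itself).  What you are missing is the single structural fact that drives the paper's proof: \emph{every} element of $v\setminus u$ has value strictly less than $u_\ell$.  This follows in one line from $4231$-avoidance (if $\myomega_i\in v\setminus u$ had $\myomega_i>u_\ell$, take the next right-to-left maximum $\myomega_j$ after it; then $\myomega_\alpha\,\myomega_i\,\myomega_j\,u_\ell$ is $4231$).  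Once you know this, any $\myomega_j\in v\setminus u$ gives $\myomega_j<u_\ell<c$, and then $c,\myomega_\alpha,\myomega_j,u_\ell$ is your $3412$, closing the left-of-$\alpha$ case.

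More to the point, once you have that fact the entire gap-by-gap casework evaporates.  The paper argues globally: since everything left of $\alpha$ is below $u_\ell$ (by the $3412$ argument just given) and everything right of $i_p$ is above $\myomega_\alpha$, the bijection $\myomega$ must place every integer in $[u_\ell,\myomega_\alpha]$ somewhere in $v$; these values then occur in decreasing order (else $4231$), so they are exactly the right-to-left maxima.  Three sentences instead of a multi-case induction.
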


\begin{proof}
Since $\myomega$ avoids the pattern 4231, for any $\myomega_i\in
v\backslash u$ we must have $\myomega_i<u_\ell$.  To see this, suppose,
par contradiction, that there exists an $\myomega_i\in v\backslash u$
with $\myomega_i>u_\ell$.  Since $\myomega_i\notin u$, there must exist
some $i<j<i_p$ such that $\myomega_j\in u$ and $\myomega_i<\myomega_j$.  But
then $\myomega_\alpha\myomega_i\myomega_ju_\ell$ forms a 4231 pattern,
giving a contradiction.

Since $u\not=v$, there exists some $\myomega_j\in v\backslash u$, which
must have $\myomega_j<u_\ell$ by the above argument.  The definition of
$\alpha$ guarantees that for any integer $i<\alpha$, we have
$\myomega_i<\myomega_\alpha$.  Thus, for any $i<\alpha$ we must also have
$\myomega_i<u_\ell$ or else $\myomega_i\myomega_\alpha\myomega_ju_\ell$ will
form a 3412 pattern in $\myomega$.

Let $m=w_\alpha-u_\ell$.
Then by the previous argument and the fact that $\myomega:\Z\to\Z$ is a bijection,
every value between $\myomega_\alpha$ and $\myomega_\alpha-m$ must occur in $v$, or else $u=v$.
All of these values must occur in decreasing order, otherwise $\myomega$ would have a 4231 pattern.
\end{proof}

We now prove a lemma that is new to the affine case that arises since the subsequence $v$ need not be a consecutive subsequence.

\begin{lemma}
\label{lem:small1}
Suppose $\myomega$ avoids 3412 and 4231, and $v$ is interrupted.
If $\myomega_\beta$ is the largest element of $\myomega$ that interrupts $v$,
then $\myomega_\beta$ starts a consecutive decreasing subword ending in $u_\ell$ in the one-line notation for $\myomega$.
\end{lemma}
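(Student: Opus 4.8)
The plan is to prove the apparently stronger statement that $\myomega_\beta>\myomega_{\beta+1}>\cdots>\myomega_{i_p}$; since $\myomega_{i_p}=u_\ell$ and $\myomega$ is a bijection of $\Z$, this is exactly the assertion that $\myomega_\beta$ starts a consecutive decreasing subword ending in $u_\ell$. The first thing I would establish is that $\myomega_\beta$ dominates the whole initial segment of indices up to $i_p$: namely $\myomega_\gamma<\myomega_\beta$ for every $\gamma\le i_p$ with $\gamma\ne\beta$. For $\gamma\le\alpha$ this follows from the choice of $\alpha$ (which forces $\myomega_\gamma<\myomega_\alpha$ for $\gamma<\alpha$, as noted in the proof of Lemma~\ref{Gasharov}) together with $\myomega_\alpha<\myomega_\beta$; for $\gamma=i_p$ one has $\myomega_{i_p}<\myomega_\alpha<\myomega_\beta$ since $\myomega_{i_p}$ forms an inversion with $\myomega_\alpha$; and for $\alpha<\gamma<i_p$, $\gamma\ne\beta$, either $\myomega_\gamma<\myomega_\alpha<\myomega_\beta$ or else $\myomega_\gamma>\myomega_\alpha$, in which case $\myomega_\gamma$ itself interrupts $v$, so $\myomega_\gamma\le\myomega_\beta$ by maximality of $\myomega_\beta$ and hence $\myomega_\gamma<\myomega_\beta$ as the two indices are distinct.

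Next I would argue by contradiction: suppose $\myomega$ fails to be decreasing on the block $[\beta,i_p]$, so there is an index $\gamma$ with $\beta\le\gamma<i_p$ and $\myomega_\gamma<\myomega_{\gamma+1}$. The case $\gamma=\beta$ is excluded by the previous paragraph (it would give $\myomega_\beta=\myomega_\gamma<\myomega_{\gamma+1}$, contradicting that $\myomega_\beta$ dominates indices $\le i_p$), so $\beta<\gamma$. I then claim $\myomega_\gamma<u_\ell$. If $\gamma+1=i_p$ this is immediate, since $\myomega_{\gamma+1}=\myomega_{i_p}=u_\ell$. If $\gamma+1<i_p$, consider the four indices $\beta<\gamma<\gamma+1<i_p$: they carry $\myomega_\beta$ (the largest), $\myomega_\gamma$, $\myomega_{\gamma+1}$ and $u_\ell=\myomega_{i_p}$, and if $\myomega_\gamma\ge u_\ell$ then $\myomega_\beta>\myomega_{\gamma+1}>\myomega_\gamma>u_\ell$, an occurrence of 4231, contradiction. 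So $\myomega_\gamma<u_\ell$; in particular $\myomega_\gamma<\myomega_\alpha$ with $\gamma>\alpha$, so $\myomega_\gamma$ is one of the entries of $v$.

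Now I would split on whether $v$ is decreasing. If $v$ is decreasing, then $v$ ends in its least entry $u_\ell=\myomega_{i_p}$, so no entry of $v$ lies strictly below $u_\ell$, contradicting $\myomega_\gamma<u_\ell$; hence this case does not arise. If $v$ is not decreasing, then $v\ne u$ and Lemma~\ref{Gasharov} gives $u=\myomega_\alpha(\myomega_\alpha-1)\cdots(\myomega_\alpha-m)$; let $q_1=\alpha<q_2<\cdots<q_\ell=i_p$ be the (increasing) indices at which these consecutive values occur, so $\myomega_{q_t}=\myomega_\alpha-(t-1)$. Since $\myomega_\gamma<u_\ell\le\myomega_{q_t}$ for all $t$, $\gamma$ is none of the $q_t$, so $q_r<\gamma<q_{r+1}$ for some $r<\ell$; likewise $\myomega_\beta>\myomega_\alpha$ forces $q_s<\beta<q_{s+1}$ for some $s\ge 1$, and $s\le r$ because $\beta<\gamma$. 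The crucial point is then that the four indices $q_s<\beta<\gamma<q_{r+1}$ realize a 3412 pattern: their values satisfy $\myomega_\gamma<u_\ell\le\myomega_{q_{r+1}}<\myomega_{q_s}\le\myomega_\alpha<\myomega_\beta$, i.e.\ relative order $\myomega_{q_s}\mapsto 3$, $\myomega_\beta\mapsto 4$, $\myomega_\gamma\mapsto 1$, $\myomega_{q_{r+1}}\mapsto 2$. This contradicts the hypothesis, and the lemma follows.

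The inequality bookkeeping (comparing $u_\ell$, $\myomega_{q_{r+1}}$, $\myomega_{q_s}$, $\myomega_\alpha$, $\myomega_\beta$, and verifying $r<\ell$ and $s\le r$) is routine; the step I expect to be the genuine obstacle is extracting the 3412 when $v$ is not decreasing. The naive four-tuples one first tries — say $\myomega_\alpha$, $\myomega_\beta$, $\myomega_\gamma$, $u_\ell$, or an interrupter lying to the right of $\gamma$ together with $\myomega_\beta$, $\myomega_\gamma$, $u_\ell$ — only produce patterns such as 3142 or 4132, which are allowed, so one really does not get a contradiction this way. What makes the argument go through is using \emph{two} honest left-to-right maxima of $v$, one ($\myomega_{q_s}$) just before $\beta$ and one ($\myomega_{q_{r+1}}$) just after $\gamma$, and exploiting the consecutiveness from Lemma~\ref{Gasharov}: because the $\myomega_{q_t}$ are consecutive integers, $\myomega_{q_{r+1}}$ still lies strictly above the very small entry $\myomega_\gamma$ while $\myomega_{q_s}$ still lies below the interrupter $\myomega_\beta$, which is precisely the configuration of a 3412.
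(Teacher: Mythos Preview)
Your argument is correct, but it takes a noticeably different path from the paper's.  The paper never invokes Lemma~\ref{Gasharov} here.  Instead it first proves two structural facts about the interrupted word: (i) once an interrupter appears, the remaining entries of $v$ occur in strictly decreasing order (a direct $3412$ argument using $\myomega_\alpha$, the interrupter, and any later ascent inside $v$); and (ii) interrupters in later gaps dominate interrupters in earlier gaps (a $4231$ argument).  Together these force the maximal interrupter $\myomega_\beta$ to sit in the \emph{last} gap, so every index $j$ with $\beta<j<i_p$ automatically satisfies $\myomega_{i_p}<\myomega_j<\myomega_\beta$ (either $\myomega_j$ is an interrupter, hence $>\myomega_\alpha>u_\ell$, or it lies in $v$ past the first interruption, hence $>u_\ell$ by (i)).  A single $4231$ application on $\beta<j<j'<i_p$ then gives the decreasing conclusion.

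Your route reaches the same end by showing $\myomega_\beta$ dominates $(-\infty,i_p]$, then pushing a hypothetical ascent value $\myomega_\gamma$ \emph{below} $u_\ell$ via $4231$, and finally manufacturing a $3412$ from two right-to-left maxima $\myomega_{q_s},\myomega_{q_{r+1}}$ straddling $\beta$ and $\gamma$; Lemma~\ref{Gasharov} is what guarantees these maxima exist with the required value gaps.  This is sound, and the bookkeeping you flag ($s\le r<\ell$, strictness of the inequalities) all checks out.  The trade-off is that the paper's argument is shorter and self-contained, whereas yours leans on Lemma~\ref{Gasharov} and splits into the decreasing/non-decreasing cases for $v$; on the other hand, your first paragraph (that $\myomega_\beta$ dominates the whole initial segment) is a clean standalone observation that the paper only obtains implicitly.
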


\begin{proof}
Suppose that $\myomega_i$ interrupts $v$ and that there exist two elements of $v$ to the right of $\myomega_i$ not in decreasing order.
Then there are $\myomega_j,\myomega_k\in v$ with $i<j<k$ and $\myomega_j<\myomega_k$.
Since $\myomega_i$ interrupts $v$ we must have $\myomega_\alpha<\myomega_i$.
But then $\myomega_\alpha\myomega_i\myomega_j\myomega_k$ will form a 3412 pattern.
Hence, all elements of $v$ to the right of the first interrupting subsequence must occur in decreasing order.

Let $\myomega_r$ be an element in the consecutive subword lying between $v_{j-1}$ and $v_j$,
and let $\myomega_s$ be an element in the consecutive subword lying between $v_{m-1}$ and $v_m$.
Suppose $j<m$ so that $r<s$.  If $\myomega_r>\myomega_s$, then,
since $v$ is decreasing to the right of the first interrupting subword,
$\myomega_r>\myomega_s>\myomega_\alpha>v_j>v_m$.
Thus, $\myomega_rv_j\myomega_sv_m$ forms a 4231 pattern, giving a contradiction.
Hence, every element in one interrupting subword is larger than
any element in any previous interrupting subword.

Let $\myomega_\beta$ be the largest element of $\myomega$ interrupting $v$, and recall $u_\ell=\myomega_{i_p}$.
By the above paragraph, $\myomega_\beta$ must occur in the last interrupting subsequence.
Then for any $\beta<j<i_p$, we must have $\myomega_j<\myomega_\beta$ since $\myomega_\beta$ was chosen to be the maximal interrupter.
We must also have $\myomega_j>\myomega_{i_p}$, since $v$ is decreasing to the right of the first interrupting subsequence.
Then all $\myomega_j$ with $\beta\le j\le i_p$ must occur in decreasing order or else we get a 4231 pattern.
\end{proof}

\begin{definition}\label{factsubword}
Let $x=x_1x_2\cdots x_k$ be a consecutive subsequence of the one-line
notation of $\myomega$.  Call $x$ a \emph{factoring subword} of $\myomega$
if it has all of the following properties:
\begin{enumerate}
\item $x$ is decreasing,\label{prop1}
\item $x_1$ is larger than all elements to its left in $\myomega$,\label{prop2}
\item $x_k$ is smaller than all elements to its right in $\myomega$,\label{prop3}
\item if $x_k$ is larger than all elements to the left of $x_1$ in $\myomega$,
	then all elements to the right of $x_k$ in $\myomega$ are larger than $x_1$.\label{prop4}
\end{enumerate}
\end{definition}
Note that Property (\ref{prop1}) implies $k\le n$ since there cannot be a
consecutive decreasing subword of length longer than $n$ in any element of $\wt{S}_n$. 

\begin{lemma}
\label{xexists}
Suppose $\myomega$ avoids the patterns 3412 and 4231.
Then either $\myomega$ or $\myomega^{-1}$ (or both) has a factoring subword.
\end{lemma}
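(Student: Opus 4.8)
The plan is to split on whether the subword $v = v^{(\alpha)}$ of $\alpha$-inversions is interrupted, and in each case to exhibit an explicit consecutive decreasing subword---of $\myomega$ in all but one subcase, and of $\myomega^{-1}$ in the remaining one---and check the four conditions of Definition~\ref{factsubword}. Two observations do most of the work. First, by the choice of $\alpha$ together with the periodicity/maximality argument already used in the proof of Lemma~\ref{Gasharov}, one has $\myomega_i < \myomega_\alpha$ for \emph{every} $i \in \Z$ with $i < \alpha$ (not merely in the base window). Second, since $v$ records \emph{all} entries to the right of position $\alpha$ forming an inversion with $\myomega_\alpha$, its last index $i_p$ is the rightmost such index, so $\myomega_j > \myomega_\alpha$ for all $j > i_p$, while $\myomega_{i_p} < \myomega_\alpha$ because $v$ is nontrivial. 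These two facts make conditions~(\ref{prop2}) and~(\ref{prop3}) immediate in every case, and force~(\ref{prop4}) to hold either vacuously or because its conclusion is automatically satisfied.

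Suppose first that $v$ is interrupted, and let $\myomega_\beta$ be the largest interrupter. Lemma~\ref{lem:small1} supplies a consecutive decreasing subword $\myomega_\beta\,\myomega_{\beta+1}\cdots\myomega_{i_p}$ of $\myomega$, and I would verify that this is a factoring subword of $\myomega$. Condition~(\ref{prop1}) is given. For~(\ref{prop2}), an entry of $\myomega$ lying to the left of position $\beta$ either lies to the left of $\alpha$ (hence is $<\myomega_\alpha<\myomega_\beta$), equals $\myomega_\alpha$, or lies strictly between $\alpha$ and $i_p$; an entry of the last type is either $<\myomega_\alpha$ or is itself an interrupter and hence $<\myomega_\beta$ by maximality. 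Condition~(\ref{prop3}) holds because every entry right of $i_p$ exceeds $\myomega_\alpha>\myomega_{i_p}$, and~(\ref{prop4}) is vacuous since $\myomega_\alpha$ lies to the left of $\myomega_\beta$ and exceeds $\myomega_{i_p}$.

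Now suppose $v$ is uninterrupted, so $v=\myomega_\alpha\myomega_{\alpha+1}\cdots\myomega_{i_p}$ is a consecutive subword of $\myomega$. If $v$ is decreasing I would take $x=v$: condition~(\ref{prop1}) is the hypothesis, (\ref{prop2})--(\ref{prop3}) follow from the two facts above, and~(\ref{prop4}) holds because its conclusion---that every entry right of $i_p$ exceeds $\myomega_\alpha$---is always true. If $v$ is not decreasing, then Lemma~\ref{Gasharov} gives $u=\myomega_\alpha(\myomega_\alpha-1)\cdots(\myomega_\alpha-m)$ with $m\ge 1$, and now $\myomega$ itself may well have no factoring subword (the candidate $v$ is not decreasing, and $u$ is not consecutive in $\myomega$), so I would pass to $\myomega^{-1}$. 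Let $\pi_1=\alpha<\pi_2<\cdots<\pi_{m+1}=i_p$ be the positions in $\myomega$ of the values $\myomega_\alpha,\myomega_\alpha-1,\dots,\myomega_\alpha-m$ (the $\pi_j$ increase because, $u$ being the subsequence of right-to-left maxima of $v$, larger $u$-values occur at earlier positions of $v$). Then the one-line notation of $\myomega^{-1}$ on the consecutive positions $\myomega_\alpha-m,\dots,\myomega_\alpha$ reads $\pi_{m+1},\pi_m,\dots,\pi_1$, a consecutive decreasing subword $x$ of $\myomega^{-1}$ of length $m+1\ge 2$. Using the dictionary ``$\myomega^{-1}_q>p$ iff the value $q$ occupies a position $>p$ in $\myomega$'', conditions~(\ref{prop2})--(\ref{prop4}) for $x$ become, respectively: no value $<\myomega_\alpha-m$ occupies a position $\ge i_p$ in $\myomega$; no value $>\myomega_\alpha$ occupies a position $\le\alpha$; and no value $>\myomega_\alpha$ occupies a position $\le i_p$. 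The first is clear since positions beyond $i_p$ carry values $>\myomega_\alpha$ while position $i_p$ carries $\myomega_\alpha-m$; the second is the strengthened defining property of $\alpha$; and the third holds because positions $\alpha,\alpha+1,\dots,i_p$ carry values $\le\myomega_\alpha$ by uninterruptedness while positions below $\alpha$ carry values $<\myomega_\alpha$. So $x$ is a factoring subword of $\myomega^{-1}$, which exhausts the cases.

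The step I expect to require the most care is this last subcase: because the factoring subword lives in $\myomega^{-1}$, one must correctly identify the entries of $\myomega^{-1}$ involved, translate each of the four conditions back into a statement about $\myomega$, and keep in mind that ``all entries to the left/right'' ranges over all of $\Z$ in the bi-infinite one-line notation---which is precisely why the strengthened form of the defining property of $\alpha$, valid over all of $\Z$ and not just the base window, is needed. Every other case reduces cleanly to the two structural observations about $\alpha$ and $i_p$ together with Lemmas~\ref{Gasharov} and~\ref{lem:small1}.
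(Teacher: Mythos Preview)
Your proof is correct and follows essentially the same three-case split as the paper's own argument (interrupted; uninterrupted and decreasing; uninterrupted and not decreasing), choosing the same factoring subword in each case and invoking Lemmas~\ref{Gasharov} and~\ref{lem:small1} in the same places. If anything, your verification of Properties~(\ref{prop2})--(\ref{prop4}) is more explicit than the paper's, particularly in the interrupted case and in translating the conditions to $\myomega^{-1}$.
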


\begin{proof}
If $v$ is decreasing and not interrupted, set $x=v$, and $x$ will be a factoring subword.
To see that Property (\ref{prop4}) holds, note that since $v$ is not interrupted, $x_1=\myomega_\alpha$,
and no element to the right of $x_k$ in $\myomega$ can form an $\alpha$-inversion.

If $v$ is not decreasing and not interrupted, then by Lemma~\ref{Gasharov},
$u=\myomega_\alpha(\myomega_\alpha-1)\cdots(\myomega_\alpha-m)$ for some $m$.
Let $x$ be the subword of $\myomega^{-1}$ corresponding to $u$.
In other words, $x=\myomega^{-1}_{\myomega_\alpha-m}\cdots\myomega^{-1}_{\myomega_\alpha-1}\myomega^{-1}_{\myomega_\alpha}$.
Note, $x$ is a consecutive subword of $\myomega^{-1}$, since the values of $u$ are all consecutive.
Property (\ref{prop1}) holds, since $u$ is decreasing.
Since $\myomega_\alpha$ is larger than all elements to its left in $\myomega$,
$\myomega^{-1}_{\myomega_\alpha}$ will be smaller than all elements to its right in $\myomega^{-1}$,
so that Property (\ref{prop2}) holds.
Similarly, since $u_\ell$ is smaller than all elements to its right in $\myomega$,
$\myomega^{-1}_{\myomega_\alpha-m}$ will be larger than all elements to its left in $\myomega^{-1}$.
Thus, Property (\ref{prop3}) holds.
Finally, if there exists some $j>\myomega_\alpha$,
such that $\alpha=\myomega^{-1}_{\myomega_\alpha}<\myomega^{-1}_j<\myomega^{-1}_{\myomega_\alpha-m}=i_p$,
then this contradicts the hypothesis that $v$ is not interrupted.
Hence, Property (\ref{prop4}) is always true.
Thus, $x$ is a factoring subword of $\myomega^{-1}$.

If $v$ is interrupted,
let $x=x_1\cdots x_k$ be the consecutive subsequence of $\myomega$
starting at the maximal interrupter, $\myomega_\beta$, and ending at $u_\ell$.
By Lemma~\ref{lem:small1}, properties (\ref{prop1})-(\ref{prop3}) hold.
Since $\myomega_\alpha$ is not in $x$ and $x_k<\myomega_\alpha$, Property (\ref{prop4}) is vacuously true.
Hence, $x$ is a factoring subword.
\end{proof}

\subsection{The Factoring Map}
\label{s:factoringmap}

The goal of this subsection is to define a map
$\Psi:\myomega\mapsto\myomega^\prime$ such that
$\poin_{\myomega^\prime}(q)$ divides $\poin_\myomega(q)$
and the quotient $\poin_\myomega(q)/\poin_{\myomega^\prime}(q)$ is palindromic.
This map will be defined whenever $\myomega$
avoids the pattern 3412 and has a factoring subword.  
 
Define $\Psi$ as follows.
Let $x=x_1\cdots x_k$ be a factoring subword for $\myomega$, and let $x_j$ be the rightmost element in $x$ such
that there is no element of $\myomega$ to the left of $x_1$ that is bigger than $x_j$.
Call $x_j$ the \emph{pivot point} of the factoring subword.
In the event where $j=k$, use $x_1$ as the pivot point instead.
This will happen, for example, when the subword of $\alpha$-inversions is not interrupted.

Suppose $x_j=\myomega_\gamma$ is the pivot point embedded in $\myomega$.
Using the definition of $c(a,b)$ in Section~\ref{s:spirals}, set $$\mysigma^{-1}=c(\gamma,k-j)c(\gamma-1,k-j)\cdots c(\gamma-(j-1),k-j).$$
Define $\Psi(\myomega)=\myomega\cdot\mysigma^{-1}$, so that $\myomega=\Psi(\myomega)\mysigma$.
In terms of the one-line notation for $\myomega$, this has the effect of shuffling $x_1\cdots x_j$ to the right past $x_{j+1}\cdots x_k$.

As an example, take $n=6$ and $$\myomega=[8,3,1,0,4,5]14,9,7,6,10,11\cdots\in\wt{S}_6.$$
Then $\myomega_\alpha=8$ so that $v=8,3,1,0,4,5,7,6$ and $u=8,7,6$.
A factoring subword is $x=14,9,7,6$, which has pivot point $x_2=\myomega_8=9$.
Hence, $$\mysigma^{-1}=c(8,2)c(7,2)=s_8s_9s_7s_8=s_2s_3s_1s_2,$$
where the indices of the generators are taken mod $n$, so that
$$\Psi(\myomega)=\myomega\cdot s_2s_3s_1s_2=[1,0,8,3,4,5]7,6,14,9,10,11\cdots.$$
\smallskip

We collect here some facts about $\mysigma$ and $\Psi(\myomega)$.  Fix
the notation $d=\gamma-(j-1)+(k-j-1)$ and $J=S\backslash\{s_d\}$.
Then, value $x_{1}$ occurs in position $d+1$ in $\Psi(\myomega)$, and
$x_{k}$ occurs in position $d$.  Since $k\le n$, $\mysigma$
corresponds to a non-affine permutation in $W_{J}$ with exactly one
descent at position $d$. By Lemma~\ref{parabolicredexp}, we
immediately get the following lemma.

\begin{lemma}
\label{reducedstart}
Every reduced expression for $\mysigma$ begins with the letter $s_d$, so that $\mysigma\in{}^J(\wt{S}_n)$.
\end{lemma}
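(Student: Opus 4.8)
The plan is to derive the lemma from the last sentence of Lemma~\ref{parabolicredexp}, taken with $J = S\setminus\{s_d\}$: since that lemma tells us every reduced expression for a non-identity minimal-length representative of $W_J\mysigma$ must begin with $s_d$, it is enough to check two things, that $\mysigma\neq 1$ and that $\mysigma\in{}^{J}(\wt{S}_n)$, i.e.\ $\ell(s_i\mysigma) > \ell(\mysigma)$ for every $s_i\neq s_d$. Because $\ell(s_i\mysigma) < \ell(\mysigma)$ is equivalent to $\ell(\mysigma^{-1}s_i) < \ell(\mysigma^{-1})$, the condition $\mysigma\in{}^{J}(\wt{S}_n)$ is precisely the statement that the one-line notation of $\mysigma^{-1}$ has a single descent and that it is located at position $d$ --- which is exactly the assertion about $\mysigma$ recorded in the paragraph preceding the lemma. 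So the proof reduces to making that assertion explicit.

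To do so I would read off the one-line notation of $\mysigma^{-1}$ directly from its defining product $\mysigma^{-1} = c(\gamma,k-j)c(\gamma-1,k-j)\cdots c(\gamma-(j-1),k-j)$. Since $k\le n$, all of these cyclic factors involve only the $k-1$ cyclically consecutive simple reflections $s_{\gamma-j+1},\dots,s_{\gamma+k-j-1}$, so $\mysigma^{-1}$ permutes the $k$ consecutive positions $\gamma-j+1,\dots,\gamma+k-j$ and acts as the identity on every other position, up to the $n$-periodicity of $\wt{S}_n$. Using the identity $\myomega = \Psi(\myomega)\mysigma$, which shuffles the values $x_1\cdots x_j$ of the factoring subword to the right past $x_{j+1}\cdots x_k$ --- so that, as already noted, $x_k$ lands in position $d$ and $x_1$ in position $d+1$ of $\Psi(\myomega)$ --- one finds that $\mysigma^{-1}$ sends the positions $\gamma-j+1,\dots,d$ to the increasing run of values $\gamma+1,\dots,\gamma+k-j$, sends $d+1,\dots,\gamma+k-j$ to the increasing run $\gamma-j+1,\dots,\gamma$, and fixes all other positions. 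As $\gamma+k-j > \gamma-j+1$, the only descent is the drop between positions $d$ and $d+1$; inspecting the two ends of the block, and the periodicity in the degenerate case $k=n$ where the block is a full period, shows that no further descent appears. In particular $\mysigma^{-1}\neq 1$, hence $\mysigma\neq 1$.

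Putting this together, $\mysigma^{-1}$ has a unique descent, at position $d$, so $\ell(s_i\mysigma) > \ell(\mysigma)$ for all $s_i\in J$, giving $\mysigma\in{}^{J}(\wt{S}_n)$; since $\mysigma\neq 1$, Lemma~\ref{parabolicredexp} then forces every reduced expression for $\mysigma$ to begin with $s_d$, as claimed. The only place that calls for any care is the index bookkeeping modulo $n$ in the middle paragraph --- verifying that the two increasing runs meet the surrounding identity positions without creating an extra inversion, and that the factoring subword has length $k\ge 2$ so that $\mysigma$ is genuinely nontrivial --- but this is entirely routine and, as noted, essentially already contained in the discussion preceding the lemma.
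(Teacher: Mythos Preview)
Your proof is correct and follows essentially the same approach as the paper: the paper's argument is the sentence immediately preceding the lemma, which asserts that $\sigma$ has a unique (left) descent at position $d$ and then invokes Lemma~\ref{parabolicredexp}. You have simply made that assertion explicit by writing out the one-line notation of $\sigma^{-1}$ and checking $\sigma\neq 1$, which the paper leaves implicit.
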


Recall the definition of the $q$-binomial coefficients
$\binom{k}{j}_q$ in \eqref{e:q-binomial}.  Then we have the following
lemma.

\begin{lemma}
\label{sigmapalindromic}
${}^J\poin_\mysigma(q) = \binom{k}{j}_q$ which is palindromic.
\end{lemma}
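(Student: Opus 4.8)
The plan is to identify the minimal-length left coset representative $\mysigma \in {}^J(\wt{S}_n)$ with a very concrete combinatorial object and then read off its Poincar\'e polynomial in the quotient directly. Recall from the paragraph preceding Lemma~\ref{reducedstart} that $\mysigma \in W_J$ (indeed $\mysigma$ is the non-affine permutation obtained by restricting attention to the positions occupied by $x_1 \cdots x_k$), that $\mysigma$ has a single descent, located at position $d$, and that under the isomorphism $(\wt{S}_n)_J \cong S_n$ the word $x_1 \cdots x_j$ gets moved to the right past $x_{j+1} \cdots x_k$. Concretely, $\mysigma$ acting on the $k$ relevant slots is the Grassmannian-type permutation that separates $k$ entries into a block of size $j$ and a block of size $k-j$ and interleaves them in the unique way dictated by $\mysigma^{-1} = c(\gamma,k-j) c(\gamma-1,k-j)\cdots c(\gamma-(j-1),k-j)$; one checks from the definition of $c(a,b)$ in \eqref{e:cycles} that this is the longest element of the coset $W_{S\setminus\{s_d\}}\backslash W_J$-style ``$j$ out of $k$'' shuffle, i.e. the minimal-length representative of a maximal parabolic quotient of $S_k$.

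First I would make precise that $\mysigma$, regarded in $S_k$ (or $(\wt{S}_n)_J$), is exactly the minimal-length coset representative of the parabolic quotient $S_j \times S_{k-j} \backslash S_k$ — equivalently, the ``staircase'' permutation sending $1,\dots,k$ to the unique increasing-on-each-block arrangement with a single descent at $d$. This is the standard combinatorial model for the Grassmannian $\mathrm{Gr}(j,k)$, and it follows immediately from the explicit product of $c$'s: reading $\mysigma^{-1}$ right to left, each factor $c(\gamma - r, k-j)$ slides one of the first $j$ values rightward past the last $k-j$ values, so after all $j$ factors the first block of $j$ and the last block of $k-j$ have been completely interchanged, which is precisely the claimed staircase. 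Then by Lemma~\ref{parabolicredexp} (already invoked in Lemma~\ref{reducedstart}), $\mysigma$ is the minimal-length representative in ${}^J(\wt{S}_n)$, so the induced Bruhat order on ${}^J(\wt{S}_n)$ below $\mysigma$ agrees with the Bruhat order on this $S_j\times S_{k-j}\backslash S_k$ quotient below its top element.

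Next I would invoke the classical fact that the rank generating function of the full quotient $S_j \times S_{k-j} \backslash S_k$ — equivalently the Poincar\'e polynomial of the Grassmannian $\mathrm{Gr}(j,k)$, whose Schubert cells are indexed by partitions inside a $j \times (k-j)$ box with $q^{|\lambda|}$ weighting — is the Gaussian binomial coefficient $\binom{k}{j}_q$. Since $\mysigma$ is the maximal element of that quotient, its order ideal in ${}^J(\wt{S}_n)$ is the whole quotient, giving ${}^J\poin_\mysigma(q) = \binom{k}{j}_q$. (Alternatively, one can cite Proposition~\ref{p:mitchell} in the degenerate/finite regime or the well-known $q$-Vandermonde/Lindström–Gessel–Viennot argument; either way this is a standard evaluation.) Palindromicity of $\binom{k}{j}_q$ is immediate from the definition \eqref{e:q-binomial}: substituting $q \mapsto q^{-1}$ in $(m!)_q$ multiplies it by $q^{-\binom{m}{2}}$ up to a power of $q$, and the resulting powers in the numerator and denominator of $\binom{k}{j}_q$ balance to leave exactly the degree $j(k-j)$ reversal.

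The main obstacle I anticipate is the bookkeeping in the first step — verifying that the specific product $c(\gamma,k-j)c(\gamma-1,k-j)\cdots c(\gamma-(j-1),k-j)$ really is the Grassmannian permutation with descent at $d = \gamma-(j-1)+(k-j-1)$, rather than some other single-descent element. This requires tracking how the windows and the mod-$n$ subscripts in the $c(a,b)$'s interact with the positions of $x_1,\dots,x_k$ inside the one-line notation, using the already-noted facts that $x_1$ lands in position $d+1$ and $x_k$ in position $d$ of $\Psi(\myomega)$, together with $k \le n$ so that no wraparound collision occurs. Once that identification is pinned down, everything else is a citation of standard facts about Gaussian binomials and Grassmannian Schubert cells.
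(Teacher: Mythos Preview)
Your proposal is correct and follows essentially the same route as the paper's proof: both identify $\mysigma$ (via its single descent at position $d$) with the maximal element of the Grassmannian quotient $S_j \times S_{k-j} \backslash S_k$, observe that the order ideal in ${}^J(\wt{S}_n)$ is then the full set of partitions in a $j \times (k-j)$ rectangle, and conclude ${}^J\poin_\mysigma(q) = \binom{k}{j}_q$. The paper compresses this to three sentences, citing Stanley's \emph{EC1} for the $q$-binomial evaluation and invoking box complementation for palindromicity, whereas you spell out the shuffle interpretation of $\mysigma^{-1}$ more explicitly and argue palindromicity algebraically via $q \mapsto q^{-1}$; these are cosmetic differences, and the bookkeeping you flag as a potential obstacle is already handled by the paper's observation (just before Lemma~\ref{reducedstart}) that $\mysigma$ has a unique descent at $d$ and $\ell(\mysigma) = j(k-j)$.
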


\begin{proof}
Since $\mysigma$ has only one descent, the order ideal of $\mysigma$
in the quotient ${}^J(\wt{S}_n)$ corresponds to the classical
Grassmannian $G(j,k)$ consisting of all partitions that fit inside a
$(k-j)\times j$ rectangle.  In \cite[Proposition 1.3.19]{EC1} it is
shown that $\poin_\mysigma(q)=\binom{k}{j}_q$, which is known to be
palindromic by box complementation.
\end{proof}

We now show that $\Psi(\myomega)$ always lies in some proper parabolic
subgroup whenever it is defined.  Note that in the proof below we only
explicitly use avoidance of the pattern 3412.  However, the assumption
that $\myomega$ avoids the pattern 4231 is implicitly used to
guarantee the existence of a factoring subword in Lemma~\ref{xexists}.

\begin{lemma}
Suppose $\myomega$ avoids the pattern 3412 and contains a factoring subword $x=x_1\cdots x_k$. 
Set $\myomega^\prime=\Psi(\myomega)$.
Then $\myomega^\prime\in(\wt{S}_n)_J$.
\end{lemma}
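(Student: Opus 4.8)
The plan is to show that $\myomega' = \Psi(\myomega) = \myomega \cdot \mysigma^{-1}$ lies in the maximal parabolic $(\wt{S}_n)_J = \Stab([d+1, d+n])$ with $J = S \setminus \{s_d\}$, where $d = \gamma - (j-1) + (k-j-1)$. By Proposition~\ref{parabolic1}, it suffices to check that $\{\myomega'_{d+1}, \dots, \myomega'_{d+n}\} = \{d+1, \dots, d+n\}$ as a set, equivalently that $\myomega'$ fixes the window $[d+1, d+n]$ setwise. First I would recall from the discussion preceding Lemma~\ref{reducedstart} that $x_k = \myomega'_d$ (the pivot block $x_1 \cdots x_j$ has been shuffled to the right of $x_{j+1} \cdots x_k$, so $x_k$ now sits in position $d$ and $x_1$ in position $d+1$). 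Since $\mysigma^{-1}$ only permutes the positions occupied by $x_1, \dots, x_k$, the multiset of values $\{\myomega'_i : i \in \Z\}$ equals that of $\myomega$, and $\myomega'$ agrees with $\myomega$ outside the indices of $x$; thus I need only verify the window condition using the factoring subword properties.

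The key step is a value-counting argument. Using the rank-function characterization of parabolic cosets, $\myomega' \in (\wt{S}_n)_J$ iff $r_{\myomega'}(d, q) = \#\{i \le d : \myomega'_i \ge q\}$ vanishes for $q > d$ and equals the forced value for $q \le d$; concretely, I want to show every entry of $\myomega'$ in positions $\le d$ is $\le d$ and every entry in positions $> d$ is $> d$ — no, more precisely that positions $\le d$ carry exactly the values $\{\dots, d-1, d\}$ below $d$ and positions $> d$ carry $\{d+1, d+2, \dots\}$. Here is where the four defining properties of a factoring subword enter. Property~(\ref{prop3}) says $x_k$ is smaller than everything to its right in $\myomega$; since in $\myomega'$ the entries strictly right of position $d$ are $x_1 \cdots x_j$ followed by the tail of $\myomega$ to the right of $x_k$, and by Property~(\ref{prop1}) each $x_i \ge x_k$ for $i \le j$, everything in positions $> d$ of $\myomega'$ is $\ge x_k = \myomega'_d$. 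Dually, Property~(\ref{prop2}) (and the shuffle leaving $x_{j+1} \cdots x_k$ fixed in their relative block but moved left) gives that $x_1$, now in position $d+1$, dominates everything to its left — the entries of $\myomega'$ in positions $\le d$ are the entries of $\myomega$ left of $x_1$ together with $x_{j+1}, \dots, x_k$, all of which are $< x_1 = \myomega'_{d+1}$. So $\myomega'_d$ is a left-to-right-reversed extremum: positions $\le d$ all carry values $< x_1$ and $\ge$ nothing forced yet, positions $> d$ all carry values $\ge x_k$. Since $\myomega'$ is a bijection with $\myomega'_{i+n} = \myomega'_i + n$, the set of $d$ consecutive-congruence-class values in positions $[d-n+1, d]$ must be exactly $d$ consecutive integers, and the extremal conditions pin them down to $[x_k, x_k + n - 1]$ — but I must also identify $x_k = d - (\text{something})$; in fact the count of values $\le d$ appearing in positions $\le d$ equals $d$ by a $\binom{\cdot}{\cdot}$-free bijectivity/periodicity count, forcing $\{\myomega'_{d-n+1}, \dots, \myomega'_d\} = [d+1-n, d]$, hence $\myomega'$ stabilizes $[d+1, d+n]$.

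The main obstacle I anticipate is the bookkeeping of exactly which values land in positions $\le d$ versus $> d$ after the shuffle, and checking that Property~(\ref{prop4}) is genuinely needed — specifically the degenerate case where $x_k$ is larger than everything left of $x_1$ (so the pivot is taken to be $x_1$ and $j = 1$), in which case the shuffle is trivial or nearly so, and one must invoke the ``all elements to the right of $x_k$ are larger than $x_1$'' clause to still get the window condition. I would handle this by splitting into the case $j < k$ (generic) and $j = k$ (pivot reset to $x_1$), treating the latter via Property~(\ref{prop4}) directly. A secondary technical point is confirming that $d$ is well-defined mod $n$ and that $1 \le $ the relevant residue $\le n$, so that $J = S \setminus \{s_d\}$ names an honest maximal parabolic; this follows since $k \le n$ by the note after Definition~\ref{factsubword}, so the block being shuffled has length $\le n$ and stays within one period. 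Once the window condition $\{\myomega'_{d+1}, \dots, \myomega'_{d+n}\} = \{d+1, \dots, d+n\}$ is established, Proposition~\ref{parabolic1} gives $\myomega' \in (\wt{S}_n)_J$ immediately.
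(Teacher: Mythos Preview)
Your argument has a genuine gap at the ``bijectivity/periodicity count'' step. From Properties~(\ref{prop1})--(\ref{prop3}) you correctly deduce that in $\myomega'$ every position $\le d$ carries a value $< x_1$ and every position $> d$ carries a value $> x_k$. But these two inequalities do \emph{not} force the window $\{\myomega'_{d-n+1},\dots,\myomega'_d\}$ to be a block of $n$ consecutive integers. Concretely, take $n=4$ and $\myomega'=[3,0,5,2]$ with $d=2$, $x_1=\myomega'_3=5$, $x_k=\myomega'_2=0$: positions $\le 2$ carry $\{\dots,-2,1,3,0\}$, all $<5$, and positions $>2$ carry $\{5,2,7,4,\dots\}$, all $>0$; yet the window $\{\myomega'_{-1},\myomega'_0,\myomega'_1,\myomega'_2\}=\{-2,0,1,3\}$ is not consecutive and $\myomega'$ is not in any maximal parabolic. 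The values strictly between $x_k$ and $x_1$ can land on either side of position $d$, and nothing in your argument controls them. Periodicity only tells you the window values lie in $(x_k-n,\,x_1)$, an interval of length $x_1-x_k+n-1>n$ in general.

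This is exactly where the 3412-avoidance hypothesis enters, and you never invoke it. The paper's proof proceeds by letting $\myomega'_\delta$ be the maximum value in the window $[d-n+1,d]$ and supposing some $\myomega'_{d-m}$ falls below $\myomega'_\delta-n+1$; then the shifted value $\myomega'_{d-m+n}<\myomega'_\delta$ sits to the right of the factoring subword in $\myomega$, and one exhibits a 3412 pattern in $\myomega$ (e.g.\ $\myomega'_\delta\, x_1\, x_k\, \myomega'_{d-m+n}$ when $\myomega'_\delta$ lies left of $x_1$ and $\myomega'_{d-m}$ is outside $x$). Property~(\ref{prop4}) is used only in the residual case where $\myomega'_\delta$ itself lies in $x$. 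Your outline would need to supply this 3412-contradiction argument to close the gap; the purely structural count you sketch cannot succeed on its own.
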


\begin{proof}
Let $x_j$ be the pivot point of $x$, so that $\myomega^\prime_{d+1}=x_1$.
Let $\myomega^\prime_\delta$ be the largest element strictly to the left of $x_1$ in $\myomega^\prime$.
We will show $$\{\myomega^\prime_{d-n+1},\dots,\myomega^\prime_d\}=\{\myomega^\prime_\delta-n+1,\myomega^\prime_\delta-n+2,\dots,\myomega^\prime_\delta\}.$$

Any element in $\{\myomega^\prime_{d-n+1},\dots,\myomega^\prime_d\}$
is at most $\myomega^\prime_\delta$, since $\myomega^\prime_\delta$ was chosen to be maximal.
Suppose there is some $m$ with $0\le m\le n-1$ and $\myomega^\prime_{d-m}<\myomega^\prime_\delta-n+1$.
Then $\myomega^\prime_{d-m}<\myomega^\prime_\delta-n$,
since both $\myomega^\prime_\delta$ and $\myomega^\prime_{\delta-n}$ cannot be in the window $\{\myomega^\prime_{d-n+1},\dots,\myomega^\prime_d\}$.
Thus, $\myomega^\prime_{d-m}+n=\myomega^\prime_{d-m+n}<\myomega^\prime_\delta$.

First, suppose $\myomega^\prime_\delta$ is not in $x$, so that $\myomega^\prime_\delta>x_k$
and $\myomega^\prime_\delta$ lies to the left of $x_1$ in $\myomega$.
If $\myomega^\prime_{d-m}$ is not a value in $x$, then $\myomega^\prime_{d-m}$ lies to the left of $x_{j+1}$ in $\myomega^\prime$ and
$\myomega^\prime_{d-m+n}$ lies to the right of $x_k$ in $\myomega$.   So, $\myomega^\prime_{d-m+n}>x_k$
by Property~(\ref{prop3}) in Definition~\ref{factsubword}.
But then $\myomega^\prime_\delta x_1x_k\myomega^\prime_{d-m+n}$ forms a 3412 pattern in $\myomega$, giving a contradiction.
Otherwise, if $\myomega^\prime_{d-m}$ is in $x$, then $\myomega^\prime_{d-m}$ lies to the right of $x_1$ in $\myomega$,
so that $\myomega^\prime_\delta x_1\myomega^\prime_{d-m}\myomega^\prime_{d-m+n}$ will form a 3412 pattern in $\myomega$, giving another contradiction.

Second, suppose $\myomega^\prime_\delta$ is in $x$.
By the construction of the pivot point, this can only happen if
there are no elements to the left of $x_1$ in $\myomega$ that are larger than $x_k$.
Then $x_1=x_{j}=\myomega_\alpha$ and $x_2=\myomega_{\alpha+1}=\myomega^\prime_\delta$.
Also, $\myomega^\prime_{d-m+n}<x_2<\myomega_\alpha$, so that,
by Property~(\ref{prop4}) in Definition~\ref{factsubword}, $\myomega^\prime_{d-m+n}$ is in $x$.
But then $\myomega^\prime_{d-m}$ lies to the left of $x_1-n$ in $\myomega^\prime$,
which is a contradiction, since $\myomega^\prime_{d+1}=x_1$.
\end{proof}

\begin{lemma}
\label{max}
The element $\Psi(\myomega)=\myomega^\prime$ is maximal in $(\wt{S}_n)_J\cap[0,\myomega]$.
\end{lemma}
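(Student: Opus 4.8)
The plan is to show that $\myomega' = \Psi(\myomega)$ is in fact equal to $m(\myomega, J)$, the unique maximal element of $[0,\myomega]\cap (\wt{S}_n)_J$ guaranteed by Proposition~\ref{prop:}. Since we already know from the previous lemma that $\myomega' \in (\wt{S}_n)_J$, and since $\myomega' \le \myomega$ (because $\myomega = \myomega'\mysigma$ with $\ell(\myomega) = \ell(\myomega') + \ell(\mysigma)$, which follows from the construction of $\mysigma^{-1}$ as a product of the cycles $c$ whose lengths are additive in the relevant positions), it suffices to show that no element of $(\wt{S}_n)_J$ lying strictly above $\myomega'$ is still $\le \myomega$. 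Equivalently, I will verify $\myomega' = m(\myomega,J)$ directly via the greedy algorithm of Definition~\ref{findingmwJ}, or via the Demazure-product characterization.

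First I would set up the parabolic decomposition $\myomega = \mymu \cdot \mynu$ with $\mymu \in (\wt{S}_n)_J$, $\mynu \in {}^J(\wt{S}_n)$, and $\ell(\myomega) = \ell(\mymu) + \ell(\mynu)$. By Lemma~\ref{reducedstart}, $\mysigma \in {}^J(\wt{S}_n)$ as well. The key point is that right-multiplication by $\mysigma$ takes $\myomega'$ to $\myomega$ while only moving the block $x_1\cdots x_k$ within the window of $n$ consecutive positions around position $d$; since $\mysigma$ corresponds to a Grassmannian permutation in $W_J$ with a single descent at position $d$, it does not touch the parabolic factor $\mymu$. Concretely, I would argue that $m(\myomega, J) = \mymu$ and that $\mymu = \myomega'$ because applying $\mysigma$ to $\myomega'$ strictly increases every relevant rank, i.e. $\ell(\myomega' \mysigma) = \ell(\myomega') + \ell(\mysigma)$, and $\myomega'$ already sits in $(\wt{S}_n)_J$ with the maximal possible $J$-part.

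The cleanest route is probably the Demazure-product description noted right after Definition~\ref{findingmwJ}: take a reduced expression for $\myomega$ of the form (reduced expression for $\myomega'$)$\cdot$(reduced expression for $\mysigma$), observe that the $J$-letters of $\mysigma$'s reduced word, when fed through the Demazure product, contribute nothing new because $\mysigma$'s unique descent is at position $d \notin J$ and all its reduced words begin with $s_d$; hence $m(\myomega,J)$ is computed entirely from the $J$-letters of the $\myomega'$-part, which already give $\myomega'$ since $\myomega' \in (\wt{S}_n)_J$. This forces $m(\myomega,J) = \myomega'$, and by Proposition~\ref{prop:} this is exactly the maximal element of $(\wt{S}_n)_J \cap [0,\myomega]$, as desired.

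The main obstacle I anticipate is the bookkeeping needed to show that concatenating reduced expressions for $\myomega'$ and $\mysigma$ actually yields a reduced expression for $\myomega$ — i.e. that $\ell(\myomega) = \ell(\myomega') + \ell(\mysigma)$ — and then that the interaction of $\mysigma$'s letters with the Demazure product genuinely adds nothing. One must be careful that some generators $s_i$ with $i \in J$ do appear in $\mysigma$ (the cycles $c(\gamma - r, k-j)$ sweep through several adjacent transpositions), so the claim is not that $\mysigma$ has no $J$-letters but rather that the Demazure product of those $J$-letters, appended after a reduced word for the $J$-part $\myomega'$, stabilizes. I would handle this by using Proposition~\ref{prop:} in the reverse direction: since $\myomega' \le \myomega$, $\myomega' \in (\wt{S}_n)_J$, and $[0,\myomega]\cap (\wt{S}_n)_J$ has a unique maximum $m(\myomega,J)$, it is enough to show $\myomega'$ is not strictly below any element of that intersection, which reduces to checking that lifting any single Bruhat cover of $\myomega'$ within $(\wt{S}_n)_J$ leaves the interval $[0,\myomega]$ — a finite local computation using the one-line-notation covering criterion of Proposition~\ref{BruhatOrder} and the explicit window description of $\myomega'$ recorded before Lemma~\ref{reducedstart}.
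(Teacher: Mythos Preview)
Your proposal is correct and follows essentially the same approach as the paper: show $\myomega' = m(\myomega,J)$ by running the greedy algorithm of Definition~\ref{findingmwJ} on the concatenated reduced expression $(\text{reduced word for }\myomega')\cdot(\text{reduced word for }\mysigma)$.

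The one place where the paper is sharper is precisely at the obstacle you flag. Rather than falling back on a Bruhat-cover check, the paper dispatches it in one line: after the greedy algorithm has absorbed all of $\myomega'$ and skipped the initial $s_d$ of $\mysigma$, each remaining letter $s_{j_t}$ of $\mysigma$ acts on $\myomega'$ by swapping two adjacent entries inside one of the two blocks $x_{j+1},\dots,x_k$ or $x_1,\dots,x_j$; since both of these blocks are decreasing in the one-line notation of $\myomega'$, every such swap strictly \emph{decreases} length, so the greedy algorithm never accepts any of them and terminates at $\myomega'$. This structural observation about the one-line form of $\myomega'$ is the whole content of the lemma, and it makes your proposed cover-by-cover verification unnecessary.
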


\begin{proof}
Since $\myomega^\prime\in(\wt{S}_n)_J$ and $\mysigma\in{}^J\wt{S}_n$, then $\myomega=\myomega^\prime\mysigma$
is the parabolic decomposition for $\myomega$ given in Proposition~\ref{decomp}.
Let $s_{i_1}\cdots s_{i_q}$ be a reduced expression for $\myomega^\prime$, 
and let $s_ds_{j_2}\cdots s_{j_r}$ be a reduced expression for $\mysigma$.
Then $s_{i_1}\cdots s_{i_q}s_ds_{j_2}\cdots s_{j_r}$ is a reduced expression for $\myomega$.
Use the construction of $m(\myomega,J)$ given in Definition~\ref{findingmwJ}.
Since $\myomega^\prime\in(\wt{S}_n)_J$, start by choosing all of the letters in $s_{i_1},\dotsc,s_{i_q}$.
Then skip $s_d$, since it is not in $J$.
Picking any letter of the reduced expression for $\mysigma$ aside from $s_d$ reduces the length of $\myomega^\prime$,
since the elements in $x_{j+1},\dots,x_k$ and $x_1,\dots,x_j$ occur in decreasing order in $\myomega^\prime$.
Hence,  $m(\myomega,J)=\myomega^\prime$.
\end{proof}

\begin{lemma}
\label{recursiveStep}
If $\myomega$ has a factoring subword and avoids the patterns 3412 and 4231, then $\Psi(\myomega)=\myomega^\prime$ also avoids 3412 and 4231.
\end{lemma}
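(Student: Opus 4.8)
The plan is to show that the factoring map $\Psi$ cannot create a new occurrence of $3412$ or $4231$, by analyzing what $\Psi$ does to the one-line notation. Recall that $\Psi(\myomega) = \myomega^\prime$ is obtained from $\myomega$ by taking the factoring subword $x = x_1 \cdots x_k$ (a consecutive decreasing block) and cyclically rotating its prefix $x_1\cdots x_j$ past its suffix $x_{j+1}\cdots x_k$, leaving the rest of the one-line notation untouched. So as a sequence of \emph{values}, the multiset of entries in every window is preserved; only the relative order of the entries \emph{within} the block $x$ changes, and after $\Psi$ the block reads $x_{j+1} \cdots x_k x_1 \cdots x_j$, which is still the concatenation of two decreasing runs. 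The key structural facts I would use are the defining Properties (1)--(4) of a factoring subword (Definition~\ref{factsubword}): $x_1$ exceeds everything to its left, $x_k$ is smaller than everything to its right, and Property~(4) controls the one case where values below $x_1$ can reappear to the right of $x_k$.

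First I would set up the argument by supposing, par contradiction, that $\myomega^\prime$ contains a $3412$ or $4231$ pattern on positions $p_1 < p_2 < p_3 < p_4$. Since $\myomega$ and $\myomega^\prime$ agree outside the positions occupied by the block $x$, and agree as multisets on $x$, the pattern must use at least two positions inside $x$ (a pattern using at most one position of $x$ occurs verbatim in $\myomega$). Moreover, since within $x$ the map $\Psi$ only swaps the two decreasing runs $A = x_1\cdots x_j$ (the larger values, occupying the \emph{earlier} slots in $\myomega$) and $C = x_{j+1}\cdots x_k$ (the smaller values), any two positions of the pattern that both lie in $x$ and lie in the \emph{same} run keep their relative order under $\Psi$; only a pair with one entry in $A$ and one in $C$ gets flipped, and in $\myomega^\prime$ every entry of $C$ precedes every entry of $A$ while in $\myomega$ the reverse holds. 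So the only genuinely new relative orderings are: in $\myomega^\prime$ a value from $C$ appears \emph{before} a value from $A$, whereas in $\myomega$ it appeared after. I would then do a short case analysis on which of the four pattern positions land in $A$ versus $C$ versus outside $x$ (left of $x_1$, or right of $x_k$).

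The main case — and the main obstacle — is when the pattern genuinely straddles the $A$/$C$ split, i.e. uses a position in $C$ followed by a position in $A$ inside $\myomega^\prime$. Here I would exploit the factoring-subword properties to derive a $3412$ in the \emph{original} $\myomega$, contradicting the hypothesis. The idea: take such a straddling pair, say $x_b \in C$ at pattern position $\leq$ and $x_a \in A$ at the next pattern position; in $\myomega$ these same values appear in the order $x_a$ (large, early) then $x_b$ (small, late). Any pattern letter of $\myomega^\prime$ lying strictly left of $x_1$ has value $< x_1 \le x_a$ by Property~(2), and any pattern letter lying strictly right of $x_k$ has value $> x_k \ge x_b$ by Property~(3); using these inequalities together with the fact that in $\myomega$ we have $\cdots x_a \cdots x_b \cdots$ with $x_a > x_b$, one assembles a $3412$ pattern in $\myomega$ (the roles of the ``$3$'' and ``$4$'' being played by a left-outside letter and $x_a$, and the ``$1$'', ``$2$'' by $x_b$ and a right-outside letter — the precise assignment depends on the case). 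The delicate sub-case is when values below $x_1$ do occur right of $x_k$, which is exactly the situation Property~(4) addresses: there it forces $x_k$ itself to be \emph{not} larger than everything left of $x_1$, pinning down $x_1 = x_j = \myomega_\alpha$ and the neighbor structure used elsewhere in this section, and one checks directly that no bad pattern can form. I expect essentially the same bookkeeping handles $4231$, since a $4231$ in $\myomega^\prime$ restricted to $x$ would need a value in $C$ left of a value in $A$ as its ``$2$'' before a ``$4$'' (impossible: $C < A$), forcing the $4$ and $1$ outside $x$, which again contradicts Properties~(2)--(3) or reduces to the Property~(4) special case. Once all cases close, we conclude $\myomega^\prime$ avoids both patterns.
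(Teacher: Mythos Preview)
Your approach is the same as the paper's --- partition the block into $y = x_1\cdots x_j$ and $z = x_{j+1}\cdots x_k$, then case-split on where the pattern letters land --- but the hardest case is not handled correctly.

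The gap is precisely the $3412$ sub-case in which the $4$ lies in $y$ and the $3$ lies in $z$, so the leftmost pattern entry is \emph{inside} the block. Your recipe ``use a left-outside letter and $x_a$ as the $3$ and $4$'' fails here because the pattern has no left-outside letter at all. The paper resolves this by \emph{manufacturing} a new $3$: when the pivot index satisfies $j>1$, the very definition of the pivot guarantees an element $\myomega'_m$ strictly to the left of the block with value in the open interval $(x_{j+1},\,x_1)$, and then $\myomega'_m,\,x_1,\,\myomega'_{i_1},\,\myomega'_{i_2}$ is a $3412$ in $\myomega$. Only when $j=1$ and no such $\myomega'_m$ exists does Property~(4) apply --- its conclusion (every value right of $x_k$ exceeds $x_1$) then forbids the $1$ and $2$ from existing at all. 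Your Property~(4) paragraph has this logic inverted: the presence of values below $x_1$ to the right of $x_k$ does \emph{not} force the pivot to be $x_1$; by the contrapositive of Property~(4) it only says some element left of $x_1$ exceeds $x_k$, which is compatible with any pivot index $j<k$. So Property~(4) alone cannot close this case; you need the pivot-based replacement argument.

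Two smaller points. First, your reduction to ``at least two positions inside $x$'' glosses over affine periodicity: a pattern may touch several translates of the block. The paper controls this by normalizing the $4$ into a single window $[t\,z\,y]$ and repeatedly using that every element of a given copy of $y$ exceeds everything to its left not in that copy --- itself a consequence of the pivot choice, not just Property~(2). Second, your $4231$ sketch is too thin; the paper still needs a genuine (though easier) split on whether the $4$ lies in $t\cup z$ or in $y$, and the argument that the remaining letters avoid all translates of $y$ uses the same ``$y$ dominates its left'' fact.
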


\begin{proof}
Let $x=x_1\cdots x_k$ be a factoring subword for $\myomega$, and let $x_j=\myomega_\gamma$ be the pivot point.
Partition $x$ as $y=x_1\cdots x_j$ and $z=x_{j+1}\cdots x_k$.
Since $x$ is decreasing, this implies every element of $y$ is larger than every element of $z$.
Moreover, when considered as subwords of either $\myomega$ or $\myomega^\prime$,
every element of $y$ is larger than all elements to its left not in $y$.
Consider the following consecutive sequences in $\myomega$ and $\myomega^\prime$:
\begin{align*}
\myomega&=\cdots(y-n)\,(z-n)\left[t\,y\,z\right](t+n)\cdots,\\
\myomega^\prime&=\cdots(z-n)\,(y-n)\left[t\,z\,y\right](t+n)\cdots,
\end{align*}
where $t$ is the (possibly empty) subsequence of indices between $(z-n)$ and $y$ in $\myomega$ so $[tyz]$ forms a window of length $n$.
If $\myomega^\prime$ contains either of the patterns 4231 or 3412, we may assume the 4 of the pattern occurs in the window $[t\,z\,y]$.

First, suppose $\myomega^\prime$ has a 4231 pattern at indices $i_4<i_2<i_3<i_1$.
If $\myomega^\prime_{i_4}$ occurs in $t$ or $z$, then no other part of the 4231 pattern occurs in $y$,
or any of its translates, since all values in $y$ are bigger than all values to the left of $y$.
Hence, the relative positions of the 4231 pattern remain unchanged in $\myomega$,
so that $\myomega$ also contains 4231, giving a contradiction.
If $\myomega^\prime_{i_4}$ occurs in $y$,
then only the 2 of the 4231 pattern may occur in $y$, since $y$ is decreasing,
and no other terms of the pattern may occur in any translate of $y$.
This follows since every element of $y$ is larger than all elements to the left of $y$.
The remaining terms of the 4231 pattern occur in translates of $t$ and $z$, so that
the relative positions of the 4231 pattern remain unchanged in $\myomega$.
Hence, $\myomega$ also contains 4231, giving another contradiction.

Second, suppose $\myomega^\prime$ has a 3412 pattern at indices $i_3<i_4<i_1<i_2$.
If $\myomega^\prime_{i_4}$ occurs in $t$ or $z$, then $\myomega^\prime_{i_1}$ and $\myomega^\prime_{i_2}$ cannot occur in $y$.
Hence, they still occur to the right of $\myomega^\prime_{i_4}$ in $\myomega$ and in the correct relative order.
Since $\myomega^\prime_{i_3}$ occurs to the left of $\myomega^\prime_{i_4}$ in both  $\myomega^\prime$  and $\myomega$,
we get that $\myomega$ also contains 3412.
If $\myomega^\prime_{i_4}$ occurs in $y$,
then we may assume $\myomega^\prime_{i_4}=x_1$, since $y$ is decreasing.
Since $\myomega^\prime_{i_1}$ and $\myomega^\prime_{i_2}$ cannot occur in any translate of $y$,
they still occur to the right of $\myomega^\prime_{i_4}$ in $\myomega$ and in the correct relative order.
If $\myomega^\prime_{i_3}$ is not in $z$, then $\myomega^\prime_{i_3}$ still occurs to the left of $\myomega^\prime_{i_4}$ in $\myomega$,
so that $\myomega$ also contains 3412.
So suppose, in addition, that  $\myomega^\prime_{i_3}$ occurs in $z$.
Note that, by Property (\ref{prop3}) in Definition~\ref{factsubword}, we must have $x_k<\myomega^\prime_{i_1}<\myomega^\prime_{i_2}$.
Moreover, since $\myomega^\prime_{i_3}$ is in $z$, and $z$ is decreasing, we must have $\myomega^\prime_{i_2}<x_{j+1}$.
If there exists a value $\myomega^\prime_m$ occurring to the left of $x_{j+1}$ in $\myomega^\prime$
with $x_{j+1}<\myomega^\prime_m<x_1$, replace $\myomega^\prime_{i_3}$ with $\myomega^\prime_m$.
Then $\myomega^\prime_m$ will occur to the left of $\myomega^\prime_{i_4}$ in $\myomega$ and $\myomega^\prime_{i_2}<x_{j+1}<\myomega^\prime_m$,
so that $\myomega$ contains 3412.
Note that such a value is guaranteed to exist whenever $j>1$, by definition of the pivot point of $x$.
If $j=1$ and no such value exists, then by Property (\ref{prop4}) in Definition~\ref{factsubword},
$\myomega^\prime_{i_1}$ and $\myomega^\prime_{i_2}$ cannot exist,
contradicting the assumption that $\myomega^\prime$ contains 3412.
\end{proof}

\begin{proof}[Proof of Theorem~\ref{thm:onedirection}]
The proof is by induction on $\ell(\myomega)$.
Since $\myomega$ avoids the patterns 3412 and 4231, by Lemma~\ref{xexists}, either $\myomega$ of $\myomega^{-1}$ has a factoring subword.
We may assume $\myomega$ has a factoring subword since $\poin_\myomega(q)=\poin_{\myomega^{-1}}(q)$.
Set $\myomega^\prime=\Psi(\myomega)$, so that $\myomega=\myomega^\prime\mysigma$.
By Lemma~\ref{max}, we have $\myomega^\prime=m(\myomega,J)$.
By Lemma \ref{reducedstart}, we have $\mysigma\in{}^J(\wt{S}_n)$.
Hence, by Proposition~\ref{BilleyPostDecomp}, $$\poin_\myomega(q)=\poin_{\myomega^\prime}(q)\cdot{}^J\poin_\mysigma(q).$$

By Lemma~\ref{sigmapalindromic}, ${}^J\poin_\mysigma(q)$ is palindromic (in fact, a $q$-binomial coefficient).
Lemma~\ref{recursiveStep} shows that $\myomega^\prime$ avoids 3412 and 4231, so by induction, $\poin_{\myomega^\prime}(q)$ is also palindromic.
\end{proof}

\section{When $\myomega$ Contains 4231}
\label{Converse}
The goal of this section is to prove the following theorem.

\begin{thm}
\label{thm:4231}
Let $\myomega\in\wt{S}_n$ be an affine permutation that contains the pattern 4231.
Then $P_\myomega(q)$ is not palindromic.
\end{thm}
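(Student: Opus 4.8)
The plan is to show that in $P_\myomega(q)$ the coefficient of $q$ and the coefficient of $q^{\ell(\myomega)-1}$ are not equal; since a palindromic polynomial has these two coefficients equal, this proves $P_\myomega(q)$ is not palindromic. Write $L=\ell(\myomega)$. Because the Bruhat interval $[0,\myomega]$ is graded with unique maximum $\myomega$, the coefficient of $q^{L-1}$ in $P_\myomega(q)=\sum_{\mynu\le\myomega}q^{\ell(\mynu)}$ equals the number of coatoms of $\myomega$, that is, the number of reflections $t\in T$ with $\myomega t\lessdot\myomega$; by Proposition~\ref{BruhatOrder} these are exactly the $t_{ij}$ satisfying its three conditions. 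The coefficient of $q$ equals the number of length-one elements $\le\myomega$, i.e.\ the number of simple reflections below $\myomega$, which is the size of the support of $\myomega$ (the set of simple generators appearing in a reduced expression) and is therefore at most $n$. So it suffices to show that an affine permutation containing $4231$ has strictly more coatoms than its support has elements.

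I would first dispose of the case in which $\myomega$ lies in a proper parabolic subgroup, equivalently in which the support of $\myomega$ is smaller than $S$. Then $\myomega$ lies in one of the maximal proper parabolics $(\wt{S}_n)_{S\setminus\{s_i\}}\cong S_n$ described in Proposition~\ref{parabolic1}, and Corollary~\ref{propercase} immediately gives that $P_\myomega(q)$ is not palindromic. So I may assume $\myomega$ has full support; then the coefficient of $q$ is exactly $n$, and it remains only to exhibit $n+1$ distinct coatoms of $\myomega$.

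For the main construction, fix an occurrence of $4231$ in $\myomega$ at positions $p_1<p_2<p_3<p_4$ with $\myomega_{p_1}>\myomega_{p_3}>\myomega_{p_2}>\myomega_{p_4}$, chosen as tight as possible, say with $\myomega_{p_1}-\myomega_{p_4}$ minimal and then $p_4-p_1$ minimal. From a tight occurrence one reads off several covering transpositions $t_{ij}\lessdot\myomega$, namely those pairing the ``$4$'' with the ``$2$'', the ``$3$'' with the ``$1$'', the ``$4$'' with the ``$3$'', and the ``$2$'' with the ``$1$''. Minimality forces condition~(2) of Proposition~\ref{BruhatOrder} in each case, since an obstructing value in between would give a strictly tighter $4231$, while condition~(3) is settled by checking whether the two indices lie in a common window of length $n$ or, if not, that $\myomega_i-\myomega_j<n$ follows from minimality of $\myomega_{p_1}-\myomega_{p_4}$; these transpositions move pairwise-different entries, so they yield distinct coatoms. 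To push the total past $n$ I would combine them with coatoms forced by full support: Corollary~\ref{parabolic1cor} says that for each position $i$ the value $\myomega_i$ is not an isolated ``peak'' in its length-$n$ window, and after a standard tightening (replacing an index by a nearer one, using condition~(3) since the relevant indices are then within distance $n$) this produces covering transpositions spread across all $n$ residue classes. The remaining point is to verify that the union of the pattern-coatoms and these support-coatoms has size at least $n+1$: a tight $4231$ links two entries whose values both lie strictly between $\myomega_{p_4}$ and $\myomega_{p_1}$, and one can arrange the support-coatoms so that at least one such covering transposition is not among them. Then $[q^{L-1}]P_\myomega(q)\ge n+1>n$, contradicting palindromicity.

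The genuinely hard part is this last comparison: the $4231$ occurrence and the family of support-coatoms must be chosen compatibly so that the size of their union is provably at least $n+1$, and all of this has to be carried out while respecting the periodicity $\myomega_{i+n}=\myomega_i+n$ and keeping careful track of which pairs of indices sit in a common length-$n$ window, where condition~(3) of Proposition~\ref{BruhatOrder} is automatic, versus which straddle windows, where one instead must check $\myomega_i-\myomega_j<n$. The individual covering-relation verifications and the ``replace an index by a nearer one'' tightening steps I expect to be routine once the configuration is fixed; the real obstacle is the bookkeeping that makes the two families of coatoms interlock correctly.
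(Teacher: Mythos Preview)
Your overall strategy is exactly the paper's: compare the coefficients $c_1$ and $c_{L-1}$ of $P_\myomega(q)$, use Corollary~\ref{propercase} to reduce to the full-support case so that $c_1=n$, and then produce at least $n+1$ coatoms. You have also correctly located the difficulty.

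The gap is that you have not supplied a mechanism for the hard part you flag at the end. Your plan is to extract four coatoms from a ``tight'' $4231$ occurrence and then combine them with a family of ``support-coatoms'' coming from Corollary~\ref{parabolic1cor}, but you give no argument that the union has size at least $n+1$; indeed you do not specify what the support-coatoms are or why there are roughly $n$ of them. Corollary~\ref{parabolic1cor} tells you only that certain values exist in each window, not that they assemble into $n$ distinct covering transpositions, and there is no reason the four pattern-coatoms should be disjoint from whatever support family you build. The claim that minimality of the $4231$ forces condition~(2) of Proposition~\ref{BruhatOrder} for all four transpositions simultaneously, and that condition~(3) follows ``from minimality of $\myomega_{p_1}-\myomega_{p_4}$'', also needs proof; a value obstructing one of your four covers need not produce a strictly tighter $4231$.

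The paper organizes exactly this bookkeeping via a graph. One first normalizes the $4231$ so that its ``$4$'' entry, at position $\beta$, is larger than everything to its left, then lets $a_1<\cdots<a_r$ be the left-to-right maxima in the window $[\myomega_\beta,\dots,\myomega_{\beta+n-1}]$ and builds a graph $G_\beta$ whose edges are covering transpositions $t_{j,k}$ with $\beta\le j<\beta+n$. A chain lemma (Lemma~\ref{chain}) makes each block $[a_i,a_{i+1})$ a connected subgraph $H_{i,\beta}$, yielding at least $n-r$ edges (Lemma~\ref{noassumption}); full support then furnishes one extra edge $e_i$ leaving each $H_{i,\beta}$, for a baseline of $n$ edges (Lemma~\ref{notproper}). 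Finally a three-case analysis, depending on how the $4231$ sits relative to $a_1,a_2$ and $\beta+n$, locates either a cycle in $H_{1,\beta}$ or an edge distinct from the $e_i$'s and outside $H_\beta$, pushing the count past $n$. The graph framework is what converts the vague ``support-coatoms plus pattern-coatoms'' idea into a count you can actually control; this is the missing ingredient in your proposal.
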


In particular, we show that if $$\poin_\myomega(q)=\sum_{i=0}^{\ell(\myomega)}c_iq^i,$$ then $c_1<c_{\ell(\myomega)-1}$.
Recall that $c_1$ counts the number of unique generators occurring in any reduced expression for $\myomega$,
and that $c_{\ell(\myomega)-1}$ counts the number of affine permutations covered by $\myomega$ in Bruhat order.
We will construct a graph whose edges correspond to these covering relations.
Whenever $\myomega$ contains 4231, this graph will have more than $c_1$ edges.

Let $\myomega\in\wt{S}_n$.
Fix $\beta\in\Z$ such that $\myomega_i<\myomega_\beta$ for all $i<\beta$, and let $a_1,\dots,a_r$ be the indices corresponding to the subsequence
of left to right maxima in the window $[\myomega_\beta,\dots,\myomega_{\beta+n-1}]$.
Let $G_\beta$ be the graph on the vertices 
\begin{equation}\label{e:vertices}
V_\beta=\{j\in\Z:j\ge \beta\text{ and }\myomega_j\le\myomega_{a_r}\},
\end{equation}
with edges
\begin{equation}\label{e:edges}
E_\beta=\left\{(j,k)\in V_\beta\times V_\beta:\beta\le j<\beta+n,\,j<k,\text{ and }\ell(\myomega t_{j,k})=\ell(\myomega)-1\right\}.
\end{equation}
For $1\le i\le r-1$, let $H_{i,\beta}$ be the induced subgraph of $G_\beta$ on the vertices $[a_i,a_{i+1})$,
and let $H_{r,\beta}$ be the induced subgraph of $G_\beta$ on $V_\beta\cap[a_r,\beta+n)$.
Let $H_\beta=H_{1,\beta}\cup\cdots\cup H_{r,\beta}$.

The following technical lemma is used to determine if $\myomega$ is in a proper parabolic subgroup or not.

\begin{lemma}
\label{parabolic2}
If $r\ge2$, and there exists some $1\le i\le r-1$
such that for all $a_{i+1}\le b<a_{i+1}+n$, $\myomega_b>\myomega_{a_i}$,
then $\myomega$ is in a proper parabolic subgroup.
If $\myomega$ is not in a proper parabolic subgroup, then either $r=1$,
or for all $1\le i\le r-1$ there exists some $a_{i+1}<j<a_{i+1}+n$, such that $\myomega_j<\myomega_{a_i}$.
\end{lemma}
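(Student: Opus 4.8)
\emph{Proof plan.} The plan is to prove the first assertion directly; the second then follows as its contrapositive, after a little extra care upgrading weak inequalities to strict ones. So suppose $r\ge 2$ and fix an index $i$ with $1\le i\le r-1$ for which $\myomega_b>\myomega_{a_i}$ for every $b$ with $a_{i+1}\le b<a_{i+1}+n$. Write $c=\myomega_{a_i}$ and $m=a_{i+1}$. The goal is to show that $\myomega$ stabilizes the set $[m,m+n-1]$ of $n$ consecutive integers setwise, which, after reducing $m$ modulo $n$, puts $\myomega$ in one of the maximal proper parabolic subgroups described in Proposition~\ref{parabolic1}.

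First I would show that $\myomega_p>c$ for every position $p\ge m$ and $\myomega_p\le c$ for every position $p<m$. The first half is simply the hypothesis on the window $[m,m+n-1]$ propagated by periodicity $\myomega_{p+n}=\myomega_p+n$. For the second half: if $p<\beta$ then $\myomega_p<\myomega_\beta=\myomega_{a_1}\le\myomega_{a_i}=c$ by the defining property of $\beta$ and the fact that the values of the left-to-right maxima are increasing; and if $\beta\le p<m$ then $p$ lies in the base window $[\beta,\beta+n-1]$ (since $m\le a_r\le\beta+n-1$) strictly to the left of the maximum $a_{i+1}$, so $\myomega_p$ is at most the running maximum $\myomega_{a_i}=c$. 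Since $\myomega$ is a bijection of $\Z$, it therefore restricts to bijections $(-\infty,m-1]\to(-\infty,c]$ and $[m,\infty)\to[c+1,\infty)$.

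Next I would pin down $\myomega([m,m+n-1])$. By the previous step this image lies in $[c+1,\infty)$, while $\myomega([m-n,m-1])=\myomega([m,m+n-1])-n$ lies in $(-\infty,c]$, so $\myomega([m,m+n-1])\subseteq[c+1,c+n]$; comparing cardinalities forces $\myomega([m,m+n-1])=[c+1,c+n]$. Now the defining condition $\sum_{i=1}^{n}\myomega_i=\binom{n+1}{2}$ is equivalent to $\sum_{p\in W}(\myomega_p-p)=0$ for every length-$n$ window $W$, so applying it with $W=[m,m+n-1]$ gives $\sum_{p=m}^{m+n-1}p=\sum_{q=c+1}^{c+n}q$, which simplifies to $c=m-1$. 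Hence $\myomega([m,m+n-1])=[m,m+n-1]$ and $\myomega$ lies in a proper parabolic subgroup, proving the first assertion. For the second assertion, assume $\myomega$ is not in any proper parabolic subgroup and $r\ge 2$; by the first assertion the hypothesis must fail for every $i$, so for each $i$ there is some $b$ with $a_{i+1}\le b<a_{i+1}+n$ and $\myomega_b\le\myomega_{a_i}$. Since the maxima values are strictly increasing we have $\myomega_{a_{i+1}}>\myomega_{a_i}$, so $b\ne a_{i+1}$, and $\myomega_b=\myomega_{a_i}$ is impossible because the only position in $(a_{i+1},a_{i+1}+n)$ congruent to $a_i$ modulo $n$ is $a_i+n$ (using $a_{i+1}-a_i\le n-1$), where $\myomega$ has value $\myomega_{a_i}+n$. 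Thus $a_{i+1}<b<a_{i+1}+n$ and $\myomega_b<\myomega_{a_i}$, so $j=b$ is the index sought.

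The step I expect to be the main obstacle is the window bookkeeping in the third paragraph — in particular, recognizing that the affine sum condition $\sum_{i=1}^n\myomega_i=\binom{n+1}{2}$ is precisely what upgrades ``$\myomega$ carries the window $[m,m+n-1]$ onto the interval $[c+1,c+n]$'' to ``$\myomega$ fixes that window setwise.'' Everything else is a careful, but essentially routine, unwinding of the definitions of $\beta$ and of the left-to-right maxima $a_1,\dots,a_r$.
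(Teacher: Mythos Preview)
Your proof is correct. The overall strategy matches the paper's --- show that the window $[a_{i+1},a_{i+1}+n-1]$ is mapped by $\myomega$ onto an interval of $n$ consecutive integers, then invoke Proposition~\ref{parabolic1} --- but the tactics differ. The paper lets $\myomega_j$ be the minimum of that window and argues by contradiction that no window value can exceed $\myomega_j+n$: if some $\myomega_k>\myomega_j+n$, then $\myomega_{k-n}>\myomega_j>\myomega_{a_i}$ with $k-n<a_{i+1}$, which collides with the choice of $\beta$ or with $a_i,a_{i+1}$ being consecutive left-to-right maxima. You instead establish a clean dichotomy, $\myomega_p\le c$ for $p<m$ and $\myomega_p>c$ for $p\ge m$, and then explicitly invoke the affine sum condition to force $c=m-1$. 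The paper's argument leaves that last step (why the image interval coincides with the domain interval) to Proposition~\ref{parabolic1}, so your version is more self-contained on that point. Your treatment of the second assertion is also more careful than the paper's, which simply calls it the contrapositive; you correctly note that the raw contrapositive only gives $a_{i+1}\le b$ and $\myomega_b\le\myomega_{a_i}$, and then rule out the boundary cases. (That step can be shortened: since $\myomega$ is a bijection of $\Z$, $\myomega_b=\myomega_{a_i}$ forces $b=a_i<a_{i+1}$, so equality is impossible; the residue argument is not needed.)
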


\begin{proof}
Suppose $r\ge2$ and such an $i$ exists.
Let $\myomega_j=\min\{\myomega_{a_{i+1}},\dots,\myomega_{a_{i+1}+n-1}\}$.
Note that $\myomega_j>\myomega_{a_i}$ by our assumptions on $i$.
Also, by our choice of $j$, for all $a_{i+1}\le b<a_{i+1}+n$, we have $\myomega_j\le\myomega_b$.
Suppose there exists some $a_{i+1}\le k<a_{i+1}+n$ with $\myomega_k>\myomega_j+n$.
Then $k-n<a_{i+1}$ and $\myomega_{k-n}>\myomega_j>\myomega_{a_i}\ge\myomega_{a_1}=\myomega_\beta$.
In particular, $k-n\not=a_i$.
If $k-n<\beta$, then $\myomega_{k-n}>\myomega_\beta$, which contradicts the choice of $\beta$.
If $\beta\le k-n<a_i$, this contradicts $a_i$ being a left to right maximum.
Finally, if $a_i<k-n<a_{i+1}$, this would mean there is some left to right maximum between $a_i$ and $a_{i+1}$,
contradicting the fact that $a_i$ and $a_{i+1}$ are two consecutive maxima.
So we must have $\myomega_j\le\myomega_{a_{i+1}},\dots,\myomega_{a_{i+1}+n-1}<\myomega_j+n$.
Hence, by Proposition~\ref{parabolic1}, $\myomega$ is in a proper parabolic subgroup.
The second statement of the lemma is the contrapositive of the first.
\end{proof}

The next lemma will be useful for counting edges in $G_\beta$.

\begin{lemma}
\label{chain}
Let $\myomega\in\wt{S}_n$, and let $i<j$ be such that $\myomega_i>\myomega_j$.
If either $j-i<n$ or $\myomega_i-\myomega_j<n$, then there exists an increasing sequence $i=i_1<i_2<\cdots<i_k=j$
such that $\ell(\myomega t_{i_m,i_{m+1}})=\ell(\myomega)-1$ for each $1\le m<k$.
\end{lemma}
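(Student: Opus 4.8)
The plan is to induct on $j-i$ and reduce everything to the covering-relation criterion of Proposition~\ref{BruhatOrder}. By hypothesis the pair $(i,j)$ already satisfies conditions (1) and (3) of that proposition. If it also satisfies condition (2) — no index $k$ with $i<k<j$ has $\myomega_k\in[\myomega_j,\myomega_i]$ — then $\myomega t_{ij}\lessdot\myomega$ directly, and the sequence $i=i_1<i_2=j$ finishes the job (this covers the base case $j-i=1$, where there is no room for an obstruction). Otherwise I would fix some index $k$ with $i<k<j$ and $\myomega_k\in[\myomega_j,\myomega_i]$.

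The first thing to check is that such a $k$ behaves well. Since the restriction of $\myomega$ to $\Z$ is a bijection and $i<k<j$, neither $\myomega_k=\myomega_i$ nor $\myomega_k=\myomega_j$ is possible, so in fact $\myomega_j<\myomega_k<\myomega_i$ strictly. Periodicity then forces $i\not\equiv k\pmod{n}$ and $k\not\equiv j\pmod{n}$ (for instance, $i\equiv k$ with $i<k$ would give $\myomega_k\ge\myomega_i+n$, a contradiction), so $t_{ik}$ and $t_{kj}$ are genuine reflections in $\wt{S}_n$. Next I would verify that both pairs $(i,k)$ and $(k,j)$ satisfy the hypotheses of the lemma: condition (1) is immediate from the strict inequalities, and condition (3) is inherited because $k-i,\,j-k<j-i$ handles the case $j-i<n$, while $\myomega_i-\myomega_k,\,\myomega_k-\myomega_j<\myomega_i-\myomega_j$ handles the case $\myomega_i-\myomega_j<n$. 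Since $k-i$ and $j-k$ are both strictly smaller than $j-i$, the inductive hypothesis applies to each pair, producing chains $i=i_1<\cdots<i_a=k$ and $k=i'_1<\cdots<i'_b=j$ of the required form.

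Finally I would concatenate these two chains, identifying their common endpoint $k$, to obtain the increasing sequence $i=i_1<\cdots<i_{a-1}<k<i'_2<\cdots<i'_b=j$, along which every consecutive transposition lowers the length by one, as needed. The argument is essentially routine once Proposition~\ref{BruhatOrder} is in hand; the step I expect to be the only real subtlety — and where a careless proof would slip — is the bookkeeping showing that conditions (1) and (3) descend to the two subintervals. This hinges on using bijectivity of $\myomega$ to exclude the degenerate equalities $\myomega_k=\myomega_i$ and $\myomega_k=\myomega_j$, which is also exactly what guarantees that $t_{ik}$ and $t_{kj}$ are well-defined affine transpositions.
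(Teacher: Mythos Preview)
Your proof is correct and uses essentially the same idea as the paper: reduce to Proposition~\ref{BruhatOrder}, and when condition~(2) fails, split at an intermediate index and recurse. The only structural difference is that the paper does a one-sided greedy recursion---picking $i_2$ to minimize $i_2-i_1$ (or $\myomega_{i_1}-\myomega_{i_2}$ in the second case) so that $(i_1,i_2)$ is already a cover and only $(i_2,j)$ needs further work---whereas you pick an arbitrary obstructing $k$ and recurse on both halves; both arguments rest on the same observation that the hypotheses descend to subintervals.
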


\begin{proof}
Suppose $j-i<n$.
Pick $i_2$ such that $\myomega_{i_1}>\myomega_{i_2}\ge\myomega_j$ and $i_2-i_1$ is minimal.
By Proposition \ref{BruhatOrder}, $\ell(\myomega t_{i_1,i_2})=\ell(\myomega)-1$.
Similarly, if we have defined $i_1<i_2<\cdots<i_{r-1}<j$, then we can pick $i_r$
such that $\myomega_{i_{r-1}}>\myomega_{i_r}\ge\myomega_j$ and $i_r-i_{r-1}$ is minimal.
Then again by Proposition \ref{BruhatOrder}, $\ell(\myomega t_{i_{r-1},i_r})=\ell(\myomega)-1$.
The same proof works when $\myomega_i-\myomega_j<n$,
only instead choose $i_r$ to minimize $\myomega_{i_{r-1}}-\myomega_{i_r}$.
\end{proof}

\begin{lemma}
\label{noassumption}
Each $H_{i,\beta}$ is connected, and hence $H_\beta$ has at least $n-r$ edges.
\end{lemma}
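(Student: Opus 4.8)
The plan is to show that each induced subgraph $H_{i,\beta}$ on the vertex set $[a_i,a_{i+1})$ (or $V_\beta\cap[a_i,\beta+n)$ when $i=r$) is connected by exhibiting a spanning structure built from the left-to-right maxima $a_i$. The key observation is that $\myomega_{a_i}$ is the maximum value occurring among indices in the range $[a_i,a_{i+1})$: indeed $a_{i+1}$ is by definition the first index past $a_i$ whose value exceeds all previous values, so no index strictly between $a_i$ and $a_{i+1}$ has value larger than $\myomega_{a_i}$. Hence for every vertex $j\in[a_i,a_{i+1})$ with $j>a_i$ we have $\myomega_{a_i}>\myomega_j$, and moreover $a_i\le j<a_i+n$ (since the whole block lies inside a window of length $n$), so the hypothesis ``$j-i<n$'' of Lemma~\ref{chain} applies with $i$ replaced by $a_i$. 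Lemma~\ref{chain} then produces an increasing chain $a_i=i_1<i_2<\cdots<i_k=j$ with $\ell(\myomega t_{i_m,i_{m+1}})=\ell(\myomega)-1$ for each $m$; each step $(i_m,i_{m+1})$ is an edge of $G_\beta$ provided both endpoints lie in $V_\beta$, i.e.\ provided $\myomega_{i_m}\le\myomega_{a_r}$. This containment must be checked: since the chain values are weakly decreasing down toward $\myomega_j$, and $j\in V_\beta$ means $\myomega_j\le\myomega_{a_r}$, the only possible obstruction is the top vertex $a_i$ itself having $\myomega_{a_i}>\myomega_{a_r}$. For $i=r$ this does not arise; for $i<r$ it can, but then $a_i$ simply is not a vertex of $H_{i,\beta}$, and one instead runs the argument starting from the largest vertex actually present in $[a_i,a_{i+1})\cap V_\beta$ and checks that all intermediate chain vertices, lying below it in value, also lie in $V_\beta$. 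I would phrase this carefully: let $b$ be the index in $[a_i,a_{i+1})\cap V_\beta$ with $\myomega_b$ maximal; then every other vertex $j$ of $H_{i,\beta}$ satisfies $\myomega_b>\myomega_j$ and $b-j$ or $j-b$ is less than $n$, so Lemma~\ref{chain} connects $b$ to $j$ through a chain whose interior values are sandwiched between $\myomega_j$ and $\myomega_b$, hence all $\le\myomega_{a_r}$, hence all in $V_\beta$. This shows $b$ is adjacent (via a path in $H_{i,\beta}$) to every other vertex, so $H_{i,\beta}$ is connected.

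Granting connectivity of each piece, the edge count is immediate: a connected graph on $m$ vertices has at least $m-1$ edges. The union $H_\beta=H_{1,\beta}\cup\cdots\cup H_{r,\beta}$ is a disjoint union of the $r$ pieces (their vertex sets partition $V_\beta\cap[\beta,\beta+n)$), so the number of edges of $H_\beta$ is at least $\sum_{i=1}^r(|V(H_{i,\beta})|-1)=\bigl(\sum_i|V(H_{i,\beta})|\bigr)-r$. Now $\bigsqcup_i V(H_{i,\beta})=V_\beta\cap[\beta,\beta+n)$, and since the base window $[\myomega_\beta,\dots,\myomega_{\beta+n-1}]$ contains every residue class exactly once and $\myomega_{a_r}$ is the last left-to-right maximum, the indices $j\in[\beta,\beta+n)$ with $\myomega_j>\myomega_{a_r}$ are exactly... here one must identify precisely which indices in the window are \emph{excluded}. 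The values $>\myomega_{a_r}$ in the window are exactly the values that are themselves left-to-right maxima strictly exceeding $\myomega_{a_r}$, but $a_r$ is defined as the maximal such index, so no index in $[\beta,\beta+n)$ strictly past the start has value $>\myomega_{a_r}$ except possibly earlier maxima $a_1,\dots,a_{r-1}$ — wait, those do \emph{not} exceed $\myomega_{a_r}$ since the $a_i$-values are increasing. So in fact the only window indices with value $>\myomega_{a_r}$ are... none, giving $|V_\beta\cap[\beta,\beta+n)|=n$ and hence at least $n-r$ edges. I would double-check this count against the statement and, if the excluded set is nonempty, adjust (the lemma only claims ``at least $n-r$'', so any over-counting on the vertex side is harmless).

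The main obstacle I anticipate is the bookkeeping around $V_\beta$: Lemma~\ref{chain} produces a chain of \emph{covering} transpositions, but its vertices need not a priori all satisfy the constraint $\myomega_\bullet\le\myomega_{a_r}$ defining $V_\beta$, so one cannot blindly cite it. The fix — running the chain from the $V_\beta$-maximal vertex of each block rather than from $a_i$ — is the crux, and it relies on the monotonicity of the chain values produced by Lemma~\ref{chain} (each $\myomega_{i_{m+1}}$ satisfies $\myomega_j\le\myomega_{i_{m+1}}<\myomega_{i_m}$), which pins all interior values into the interval $[\myomega_j,\myomega_b]\subseteq(-\infty,\myomega_{a_r}]$. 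Once that is set up cleanly, everything else is routine: connectivity via a common neighbor, then the elementary $m\mapsto m-1$ edge bound summed over the $r$ blocks.
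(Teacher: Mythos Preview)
Your approach matches the paper's: connect each $j\in[a_i,a_{i+1})$ to $a_i$ via Lemma~\ref{chain} (using $j-a_i<n$ and $\myomega_{a_i}>\myomega_j$), observe that the chain stays inside $H_{i,\beta}$, and then count edges. Your worry that $a_i$ might fail to lie in $V_\beta$ is unfounded---as you yourself note near the end, the left-to-right maxima have increasing values, so $\myomega_{a_i}\le\myomega_{a_r}$ for every $i$---and hence the detour through the auxiliary vertex $b$ can be dropped entirely.
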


\begin{proof}
We start by showing that if $a_i<j<a_{i+1}$ for some $1\le i<r$, then there is a path in $G_\beta$ from $a_i$ to $j$.
By the construction of the $a_i$, we always have $a_{i+1}-a_i<n$, so that $j-a_i<n$.
Since $\myomega_{a_i}>\myomega_j$, such a path exists by Lemma~\ref{chain}.
Similarly, if $a_r\le j<\beta+n$, then there is a path in $G_\beta$ from $a_r$ to $j$.
Hence, each $H_{i,\beta}$ is connected.
Since $H_\beta$ is a graph on $n$ vertices with at most $r$ connected components, it has at least $n-r$ edges.
\end{proof}

In the case where $\myomega$ is not in a proper parabolic subgroup, we can improve on the lower bound for the number of edges in $G_\beta$.
\begin{lemma}
\label{notproper}
If $\myomega\in\wt{S}_n$ is not contained in any proper parabolic subgroup, then $G_\beta$ has at least $n$ edges.
\end{lemma}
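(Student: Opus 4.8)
The idea is to take the lower bound $n-r$ from Lemma~\ref{noassumption} and show that when $\myomega$ is not in any proper parabolic subgroup we can produce $r$ additional edges of $G_\beta$, one "between" each pair of consecutive blocks $H_{i,\beta}$ and $H_{i+1,\beta}$, plus one more edge emanating from the last block. More precisely, for each $1\le i\le r-1$, I will exhibit an edge of $E_\beta$ joining a vertex in $[a_i,a_{i+1})$ to a vertex in $V_\beta\cap[a_{i+1},\beta+n)$ (or to a vertex with larger index), so that this edge is not counted among the $n-r$ edges internal to the $H_{i,\beta}$'s. Since the blocks $H_{1,\beta},\dots,H_{r,\beta}$ partition the $n$ vertices in $[\beta,\beta+n)$ into $r$ connected pieces, any $r$ edges that each go strictly between two distinct pieces (or leave the window entirely) are genuinely new, giving $\ge (n-r)+r = n$ edges total.

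The construction of the new edges is where Lemma~\ref{parabolic2} enters. Since $\myomega$ is not in a proper parabolic subgroup, by Lemma~\ref{parabolic2} either $r=1$, or for every $1\le i\le r-1$ there is an index $a_{i+1}<j<a_{i+1}+n$ with $\myomega_j<\myomega_{a_i}$. Handle $r=1$ as a base-ish case: here $H_\beta$ is a single connected graph on $n$ vertices, so $H_\beta$ has $\ge n-1$ edges, and Corollary~\ref{parabolic1cor} (applied to guarantee some value in the window drops below $\myomega_\beta=\myomega_{a_1}$, else $\myomega$ would stabilize an interval) forces the existence of an index $j\ge\beta+n$ with $\myomega_j\le\myomega_{a_r}$, i.e. a vertex of $V_\beta$ outside the base window; then Lemma~\ref{chain} produces an edge from inside $[\beta,\beta+n)$ out to that vertex, for a total of $n$. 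For $r\ge 2$: fix $i$ and pick $j$ with $a_{i+1}<j<a_{i+1}+n$ and $\myomega_j<\myomega_{a_i}$; since $\myomega_{a_i}$ is a left-to-right maximum of the window it is $\ge$ everything to its left, and $a_i<j$ with $j-a_i<2n$ — but I will want $j-a_i<n$ or $\myomega_{a_i}-\myomega_j<n$ to invoke Lemma~\ref{chain}. If $j<a_{i+1}+n$ and $a_i\ge a_{i+1}+n-n=a_{i+1}$... more carefully: replace $j$ by the least such index, so $a_{i+1}<j<a_{i+1}+n$, and compare with $a_i<a_{i+1}$; then $j-a_i<a_{i+1}+n-a_i\le 2n$, so if $j-a_i\ge n$ I instead note $\myomega_{a_i}-\myomega_j$ is controlled because $\myomega_j$ lies just below $\myomega_{a_i}$ in a way forced by $a_i,a_{i+1}$ being consecutive maxima (any value exceeding $\myomega_{a_i}+n$ in that range would itself, shifted by $-n$, be a new left-to-right maximum strictly between $a_i$ and $a_{i+1}$, or contradict the choice of $\beta$ — exactly the argument run in Lemma~\ref{parabolic2}). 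Either way Lemma~\ref{chain} gives an edge-path from $a_i$ to $j$, and since $j\ge a_{i+1}$ this path must use at least one edge leaving the block $[a_i,a_{i+1})$; call that edge $e_i$, and note the edges $e_1,\dots,e_{r-1}$ lie in distinct "gaps" so are pairwise distinct and are not internal to any single $H_{i,\beta}$. Finally, as in the $r=1$ case, Corollary~\ref{parabolic1cor} gives an index $j\ge\beta+n$ with $\myomega_j<\myomega_{a_r}$, and Lemma~\ref{chain} yields one further edge $e_r$ from within $V_\beta\cap[a_r,\beta+n)$ out past $\beta+n$, which is distinct from all $e_i$ and from all internal block edges. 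Counting: $(n-r)$ internal $+$ $r$ new $=n$.

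The main obstacle I anticipate is the bookkeeping that the $r$ new edges $e_1,\dots,e_r$ are genuinely distinct from each other and from the $n-r$ edges guaranteed inside the blocks — i.e. making the "at least $n-r$ internal edges, plus $r$ crossing edges" count airtight, since an edge produced by Lemma~\ref{chain} running from $a_i$ toward $j>a_{i+1}$ could in principle share its left endpoint with an internal edge of $H_{i,\beta}$. The clean way around this is to observe that every internal edge of $H_{i,\beta}$ has both endpoints in $[a_i,a_{i+1})$, whereas $e_i$ has its right endpoint $\ge a_{i+1}$; similarly $e_r$ has right endpoint $\ge\beta+n$, outside every block. Thus the crossing edges are disjoint from the internal edge set by a simple endpoint-location argument, and the three edge sets (internal, $\{e_1,\dots,e_{r-1}\}$, $\{e_r\}$) are pairwise disjoint, finishing the count. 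A secondary nuisance is confirming the hypothesis of Lemma~\ref{chain} ($j-i<n$ or $\myomega_i-\myomega_j<n$) holds for the chosen pairs; this is dispatched by the same "consecutive left-to-right maxima" / "choice of $\beta$" reasoning already used in Lemma~\ref{parabolic2}, so no new ideas are needed there.
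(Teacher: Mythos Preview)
Your overall strategy matches the paper's proof: obtain $n-r$ edges internal to the blocks $H_{i,\beta}$ from Lemma~\ref{noassumption}, then exhibit $r$ additional edges $e_1,\dots,e_r$, one leaving each block, using Lemma~\ref{parabolic2} for $i<r$ and Corollary~\ref{parabolic1cor} for $i=r$, and argue distinctness via the location of the left endpoint.

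There is, however, a genuine gap in your verification of the hypothesis of Lemma~\ref{chain}. When $j-a_i\ge n$ you claim $\myomega_{a_i}-\myomega_j<n$ because ``any value exceeding $\myomega_{a_i}+n$ in that range would itself, shifted by $-n$, be a new left-to-right maximum\ldots''. But that parenthetical bounds values in the window from \emph{above}; it says nothing about how far $\myomega_j$ can lie \emph{below} $\myomega_{a_i}$, and indeed $\myomega_j$ can be arbitrarily small. So as written, neither alternative in the hypothesis of Lemma~\ref{chain} is established.

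The paper's fix is simple and uniform: given any $j$ with $\myomega_j<\myomega_{a_i}$, choose $s\ge 0$ so that $0<\myomega_{a_i}-\myomega_{j+sn}<n$ (always possible since the value jumps by exactly $n$ with each shift), and apply Lemma~\ref{chain} to the pair $(a_i,\,j+sn)$ via the value-difference condition. Because $j+sn\ge j>a_{i+1}$, the resulting increasing chain still exits $[a_i,a_{i+1})$, so your ``first edge leaving the block'' extraction goes through unchanged. The same shifting device handles the $r=1$ case and the construction of $e_r$, where again you need $\myomega_{a_r}-\myomega_{j+sn}<n$ rather than any direct bound on $\myomega_{a_r}-\myomega_j$.
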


\begin{proof}
First suppose $r=1$.
Then $\myomega_\beta$ is the largest element in the window $[\myomega_\beta,\dotsc,\myomega_{\beta+n-1}]$.  
Let $\myomega_j=\min\{\myomega_\beta,\dotsc,\myomega_{\beta+n-1}\}$.
Since $\myomega$ is not in a proper parabolic subgroup, we must have $\myomega_\beta-\myomega_{j}>n$.
There exists some $s\ge1$ such that $0<\myomega_\beta-\myomega_{j+sn}<n$.
By Lemma~\ref{chain}, there is some sequence $\beta=i_1<\cdots<i_p=j+sn$,
such that $\ell(\myomega t_{i_m,i_{m+1}})=\ell(\myomega)-1$, for each $1\le m<p$.
Let $m$ be the largest index such that $i_m$ is a vertex in $H_{1,\beta}$.
Then the edge $e_1=(i_m,i_{m+1})$ is not contained in $H_\beta$, but is in $G_\beta$.

Second, suppose $r\ge2$.
Let $1\le i<r$.
By Lemma~\ref{parabolic2}, there exists some integer $a_{i+1}<j<a_{i+1}+n$ such that $\myomega_j<\myomega_{a_i}$.
Then there exists some $s\ge0$ such that $0<\myomega_{a_i}-\myomega_{j+sn}<n$.
By Lemma~\ref{chain}, there is a sequence $a_i=i_1<\cdots<i_p=j+sn$,
such that $\ell(\myomega t_{i_m,i_{m+1}})=\ell(\myomega)-1$, for each $1\le m<p$.
Let $m$ be the largest index such that $i_m$ is a vertex in $H_{i,\beta}$.
Then the edge $e_i=(i_m,i_{m+1})$ is not contained in $H_\beta$, but is in $G_\beta$.
Repeat this process for each $1\le i<r$.

Fix $j$ such that $\myomega_j=\min\{\myomega_\beta,\dots,\myomega_{\beta+n-1}\}$.
Again, since $w$ is not in a proper parabolic subgroup,  we have some $\beta\le k<\beta+n$,
such that $\myomega_k<\myomega_j$ or $\myomega_k>\myomega_{j+n}$ by Corollary~\ref{parabolic1cor}.
But, by the definition of $\myomega_j$, we cannot have $\myomega_k<\myomega_j$.
Hence, $\myomega_k>\myomega_{j+n}$.
Since $\myomega_{a_r}$ is the largest element in $\{\myomega_\beta,\dotsc,\myomega_{\beta+n-1}\}$,
we have $\myomega_{a_r}\ge\myomega_k>\myomega_{j+n}$.
Then there exists some $s\ge1$ such that $\myomega_{a_r}-\myomega_{j+sn}<n$.
So again, by Lemma~\ref{chain}, there is a sequence $a_r=i_1<\cdots<i_p=j+sn$,
such that $\ell(\myomega t_{i_m,i_{m+1}})=\ell(\myomega)-1$, for each $1\le m<p$.
Let $m$ be the largest index such that $i_m$ is a vertex in $H_{r,\beta}$.
Then the edge $e_r=(i_m,i_{m+1})$ is not contained in $H_\beta$, but is in $G_\beta$.

We have found $r$ edges, $e_1,\dotsc,e_r$, which are not contained in $H_\beta$.
By Lemma~\ref{noassumption}, $H_\beta$ has at least $n-r$ edges.
Hence, $G_\beta$ has at least $n$ edges.
\end{proof}

\begin{proof}[Proof of Theorem~\ref{thm:4231}]
Suppose $\myomega$ contains the pattern 4231.
Then there exist indices $i_4<i_2<i_3<i_1$, such that $\myomega_{i_1}<\myomega_{i_2}<\myomega_{i_3}<\myomega_{i_4}$.
If there exists some $j<i_4$ with $\myomega_j>\myomega_{i_4}$, replace $i_4$ with $j$.
Hence, we may assume $\myomega_{i_4}>\myomega_j$, for all $j<i_4$.  
By decreasing $i_1$, $i_2$, and $i_3$,  we may assume $i_2-i_4<n$ and $i_3-i_1<n$, and also that
there is no $i_2<j<i_3$ with $\myomega_{i_2}<\myomega_j<\myomega_{i_4}$ and no $i_3<j<i_1$ with $\myomega_j<\myomega_{i_2}$.
We may then replace $i_4$ by the largest value in the range $i_4\le j<i_2$ with $\myomega_j\ge\myomega_{i_4}$.
Call the final choice of $i_4<i_2<i_3<i_1$ the normalized pattern.

Set $\beta=i_4$, and construct $G_\beta$ as above.  By construction
$a_{1}=\beta =i_{4}$.  By normalization $a_{1}<i_{2}<i_{4+n}$ and
$i_{2}<a_{2}$ if $a_{2}$ exists.

By Proposition~\ref{propercase}, we may assume $\myomega$ is not in a
proper parabolic subgroup, and hence $c_1=n$.  Note that
Lemma~\ref{notproper} gives $\left|E_\beta\right|\ge n$.  This lower
bound on the number of edges in $G_\beta$ is based on the edges in a
spanning forest of $H_{\beta}$.  So if we can show that there is
either a cycle in $H_\beta$, or another edge, different from the $e_i$
found in Lemma\ref{notproper}, that is not contained in any
$H_{i,\beta}$, we will get that $\left|E_\beta\right|>n$.

There are three ways the 4231 pattern can appear in $\myomega$ based on how the pattern intersperses with $a_{1},a_{2}$ and $i_{4}+n$.

\begin{enumerate}[\text{Case} 1:]
\item Suppose $i_1<a_2$ (or $i_1<i_4+n$ in the case where $r=1$).\\
	Then, since $a_2-a_1<n$, we have $i_1-i_4<n$.
	So, by Lemma~\ref{chain}, there is a path in $H_{1,\beta}$ connecting $i_4$ to $i_2$ and $i_3$
	and a path in $H_{1,\beta}$ connecting $i_1$ to $i_2$ and $i_3$.
	Hence,  there is a cycle in $H_{1,\beta}$.

\item Suppose $a_1<i_2<i_3<a_{2}<i_1$ (or $i_3<i_4+n<i_1$ if $r=1$).\\
	By the normalization of the 4231 pattern, we have $i_2-i_4<n$ and $i_1-i_3<n$.
	By Lemma~\ref{chain}, there is a path from $i_3$ to $i_1$ in $G_{\beta}$.
	Let $\gamma=(u,v)$ be the last edge in this path such that $u$ is a vertex in $H_{1,\beta}$.
	Also, there is some $s\ge0$ such that $0<\myomega_{i_2}-\myomega_{i_1+sn}<n$.
	Hence,  there is a sequence from $i_2$ to $i_1+sn$.
	Let $\delta=(u^\prime,v^\prime)$ be the last edge in this sequence such that $u^\prime$ is a vertex in $H_{1,\beta}$.
	
	If $u=u^\prime$, then there is a cycle in $H_{1,\beta}$, since there is already a path
	from $i_4$ to $i_2$ and a path from $i_4$ to $i_3$.
	If instead, $u\not=u^\prime$, then $\gamma\not=\delta$.
	At most one of $\gamma$ and $\delta$ can equal $e_1$ and neither are in $H_\beta$.

\item Finally, suppose $a_1<i_2<a_{2}<i_3$ (or $a_1<i_2<i_4+n<i_3$ if $r=1$).\\
	There exists some $q\ge0$ such that $0<\myomega_{i_2}-\myomega_{i_1+qn}<n$.
	Hence, there is a sequence from $i_2$ to $i_1+qn$.
	Let $\gamma=(u,v)$ be the last edge in this sequence such that $u$ is an edge in $H_{1,\beta}$.
	Similarly, there exists some $s\ge0$ such that $0<\myomega_{i_4}-\myomega_{i_3+sn}<n$.
	Hence, there is a path from $i_4$ to $i_3+sn$.
	Let $\delta=(u^\prime,v^\prime)$ be the last edge in this sequence such that $u^\prime$ is a vertex in $H_{1,\beta}$.
	Since $\myomega_v<\myomega_{i_2}<\myomega_{i_3}\le\myomega_{v^\prime},$ we have $\gamma\not=\delta$.
	At most one of $\gamma$ and $\delta$ can equal $e_1$ and neither are in $H_\beta$.
\end{enumerate}
\end{proof}

\section{Affine Bruhat Pictures}\label{s:pictures}

In this section we introduce an affine version of \textit{Bruhat
pictures}, which first appeared in \cite{BilleyWarrington}.  We use
these pictures to \textit{flatten} a pair $\mychi < \myomega$ as much
as possible while preserving the length difference and the size of the
set $\scrR(\mychi,\myomega)$.  The key result is
Corollary~\ref{cor:flatten}.

Given an affine permutation $\myomega\in\wt{S}_n$, we can visualize
$\myomega$ as $\{(i,\myomega_i):i\in\Z\}\subset\Z^2$.  We think of each
pair $(i,\myomega_{i})$ as a point in the plane drawn in Cartesian
coordinates.  Let $\pt_\myomega(i)=(i,\myomega_i)$.  Furthermore, when
comparing two affine permutations $\mychi, \myomega$ in Bruhat order using
the rank difference function $d_{\mychi,\myomega}$ and
Theorem~\ref{length2}, it is useful to visualize the nonzero entries
of the function $d_{\mychi ,\myomega}$ as a union of \textit{shaded} 
rectangles in the plane.  Combining both
visualizations we get an \textit{affine Bruhat picture}.  

For example, take $\myomega=[8,3,1,0,4,5]$ and $\mychi =\myomega
t_{1,4}=[0,3,1,8,4,5]$, then $d_{\mychi ,\myomega}$ will be positive
in the translated shaded regions of the affine Bruhat picture for
$\mychi < \myomega$ in Figure~\ref{fig:shadingexample}.  Here the
points of $\myomega$ are represented by dots, while the points of
$\mychi$ are represented by x's.

\begin{figure}
\begin{center}
\begin{tikzpicture}
\pgftransformscale{0.4}

\coordinate (ineg5) at (-5,2);
\coordinate (ineg4) at (-4,-3);
\coordinate (ineg3) at (-3,-5);
\coordinate (ineg2) at (-2,-6);
\coordinate (ineg1) at (-1,-1);
\coordinate (i0) at (0,-2);
\coordinate (i1) at (1,8);
\coordinate (i2) at (2,3);
\coordinate (i3) at (3,1);
\coordinate (i4) at (4,0);
\coordinate (i5) at (5,5);
\coordinate (i6) at (6,4);
\coordinate (i7) at (7,14);
\coordinate (i8) at (8,9);
\coordinate (i9) at (9,7);
\coordinate (i10) at (10,6);
\coordinate (i11) at (11,11);
\coordinate (i12) at (12,10);

\coordinate (jneg5) at (-5,-6);
\coordinate (jneg4) at (-4,-3);
\coordinate (jneg3) at (-3,-5);
\coordinate (jneg2) at (-2,2);
\coordinate (jneg1) at (-1,-1);
\coordinate (j0) at (0,-2);
\coordinate (j1) at (1,0);
\coordinate (j2) at (2,3);
\coordinate (j3) at (3,1);
\coordinate (j4) at (4,8);
\coordinate (j5) at (5,5);
\coordinate (j6) at (6,4);
\coordinate (j7) at (7,6);
\coordinate (j8) at (8,9);
\coordinate (j9) at (9,7);
\coordinate (j10) at (10,14);
\coordinate (j11) at (11,11);
\coordinate (j12) at (12,10);

\fill [lightgray] (jneg5) -- (ineg5) -- (jneg2) -- (ineg2) -- (jneg5);
\fill [lightgray] (j1) -- (i1) -- (j4) -- (i4) -- (j1);
\fill [lightgray] (j7) -- (i7) -- (j10) -- (i10) -- (j7);

\foreach \point in {ineg5,ineg4,ineg3,ineg2,ineg1,i0,i1,i2,i3,i4,i5,i6,i7,i8,i9,i10,i11,i12}
	{\fill [black] (\point) circle (4pt);}

\foreach \point in {jneg5,jneg4,jneg3,jneg2,jneg1,j0,j1,j2,j3,j4,j5,j6,j7,j8,j9,j10,j11,j12}
	{\draw [red] (\point) node {$\times$};}
\end{tikzpicture}
\end{center}
\caption{$\myomega=[8,3,1,0,5,4]$ and $\mychi=\myomega t_{1,4}=[0,3,1,8,5,4]$.}
\label{fig:shadingexample}
\end{figure}  

Recall from Section~\ref{sub:coxeter}, $\scrR(\mychi,\myomega) = \{t\in
T:\mychi<\mychi t\le\myomega\}$.  Given integers $p<q$ such that $\mychi_p<\mychi_q$,
we will use the affine Bruhat pictures to determine if $\mychi < \mychi
t_{p,q}\leq \myomega$.  Observe that $d_{\mychi,\mychi t_{p,q}}$ is positive
on the periodic union of rectangles
\begin{align*}
\scrA_{p,q,k}(\mychi)&=[p+kn,q-1+kn]\times[\mychi_p+1+kn,\mychi_q+kn],\text{ for all }k\in\Z.
\end{align*}
It is possible that these rectangles overlap in affine Bruhat pictures
as in Figure~\ref{fig:computingR}.  If a point $(i,j)$ is contained in exactly
$m$ consecutive translates of $\scrA_{p,q,0}(\mychi)$, then
$d_{\mychi,\mychi t_{p,q} }=m$.  Thus, we get the following criterion for
determining if $t_{p,q} \in \scrR(\mychi,\myomega)$.

\begin{lemma}
\label{shading} Given affine permutations $\mychi <\myomega$ and integers
$p<q$ such that $\mychi_p<\mychi_q$, then $t_{p,q}\in\scrR(\mychi,\myomega)$,
provided $d_{\mychi,\myomega}(i,j)\ge m$ for every $(i,j)$ contained in
$m$ consecutive rectangles $\scrA_{p,q,k}(\mychi)$.
\end{lemma}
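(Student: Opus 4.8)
The plan is to check directly the two conditions defining membership of $t_{p,q}$ in $\scrR(\mychi,\myomega)$, namely $\mychi<\mychi t_{p,q}$ and $\mychi t_{p,q}\le\myomega$, using the rank-function criterion for Bruhat order (Proposition~\ref{length2}) together with the shading interpretation of the difference functions set up just before the lemma. Note that writing $t_{p,q}$ already presupposes $p\not\equiv q\pmod n$, so $\mychi t_{p,q}$ is again an element of $\wt{S}_n$.

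The first step is the elementary identity, valid for all $(i,j)\in\Z^2$,
$$d_{\mychi t_{p,q},\myomega}(i,j)=d_{\mychi,\myomega}(i,j)-d_{\mychi,\mychi t_{p,q}}(i,j),$$
which is immediate from $r_\myomega-r_{\mychi t_{p,q}}=(r_\myomega-r_\mychi)-(r_{\mychi t_{p,q}}-r_\mychi)$. As recorded above, $d_{\mychi,\mychi t_{p,q}}(i,j)$ equals the number of translates $\scrA_{p,q,k}(\mychi)$ containing $(i,j)$; in particular it is $\ge 0$ everywhere and vanishes outside $\bigcup_k\scrA_{p,q,k}(\mychi)$. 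I would also pause to observe that the set of indices $k$ with $(i,j)\in\scrA_{p,q,k}(\mychi)$ is always an interval of integers: these rectangles are successive $(n,n)$-shifts of one another, so if $(i,j)$ lies in both $\scrA_{p,q,k}(\mychi)$ and $\scrA_{p,q,k+2}(\mychi)$ then comparing the two coordinate intervals forces $(i,j)\in\scrA_{p,q,k+1}(\mychi)$ as well. This is exactly why the hypothesis, phrased in terms of $m$ \emph{consecutive} rectangles, in fact constrains $d_{\mychi,\myomega}$ at every point of the support of $d_{\mychi,\mychi t_{p,q}}$.

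Granting this, $\mychi<\mychi t_{p,q}$ follows because $d_{\mychi,\mychi t_{p,q}}\ge 0$ everywhere gives $\mychi\le\mychi t_{p,q}$ by Proposition~\ref{length2}, while $\scrA_{p,q,0}(\mychi)=[p,q-1]\times[\mychi_p+1,\mychi_q]$ is nonempty (both factors are nonempty since $p<q$ and $\mychi_p<\mychi_q$), so $d_{\mychi,\mychi t_{p,q}}$ is not identically zero and $\mychi\ne\mychi t_{p,q}$. For the second condition I would argue pointwise: if $(i,j)\notin\bigcup_k\scrA_{p,q,k}(\mychi)$, then $d_{\mychi t_{p,q},\myomega}(i,j)=d_{\mychi,\myomega}(i,j)\ge 0$ since $\mychi\le\myomega$; and if $(i,j)$ lies in exactly $m\ge 1$ of the rectangles, then the hypothesis gives $d_{\mychi,\myomega}(i,j)\ge m=d_{\mychi,\mychi t_{p,q}}(i,j)$, hence $d_{\mychi t_{p,q},\myomega}(i,j)\ge 0$. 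Thus $d_{\mychi t_{p,q},\myomega}\ge 0$ on all of $\Z^2$, and Proposition~\ref{length2} yields $\mychi t_{p,q}\le\myomega$, completing the proof.

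There is no deep obstacle here; the argument is bookkeeping with the affine Bruhat picture. The one genuine subtlety, and the step I would take the most care over, is the claim that the rectangles covering a fixed point form a consecutive block: without it, the case split in the last paragraph would not visibly exhaust the support of $d_{\mychi,\mychi t_{p,q}}$, and the conclusion $d_{\mychi t_{p,q},\myomega}\ge 0$ would have a gap.
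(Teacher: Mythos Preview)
Your argument is correct and matches the paper's approach: the paper does not give a formal proof of this lemma, but the sentence immediately preceding it records the key fact that $d_{\mychi,\mychi t_{p,q}}(i,j)=m$ whenever $(i,j)$ lies in $m$ consecutive translates of $\scrA_{p,q,0}(\mychi)$, from which the lemma is meant to be read off via Proposition~\ref{length2}. You have simply written out that deduction carefully, including the verification that the set of translates containing a given point is an interval, which the paper leaves implicit.
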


As an example of computing $\scrR(\mychi ,\myomega )$ using shading,
let $\myomega=[6,-3,8,5,4,1]$ and $\mychi=[1,2,6,5,4,3]$ (see Figure~\ref{fig:computingR}).
Then $$\scrR(\mychi,\myomega)=\{t_{12},t_{13},t_{14},t_{15},t_{16},t_{23},t_{24},t_{25},t_{26},t_{37},t_{38},t_{47},t_{48},t_{57},t_{58},t_{67},t_{68}\}.$$

\begin{figure}
\begin{center}
\begin{tikzpicture}
\pgftransformscale{0.4}

\coordinate (ineg5) at (-5,0);
\coordinate (ineg4) at (-4,-9);
\coordinate (ineg3) at (-3,2);
\coordinate (ineg2) at (-2,-1);
\coordinate (ineg1) at (-1,-2);
\coordinate (i0) at (0,-5);
\coordinate (i1) at (1,6);
\coordinate (i2) at (2,-3);
\coordinate (i3) at (3,8);
\coordinate (i4) at (4,5);
\coordinate (i5) at (5,4);
\coordinate (i6) at (6,1);
\coordinate (i7) at (7,12);
\coordinate (i8) at (8,3);
\coordinate (i9) at (9,14);
\coordinate (i10) at (10,11);
\coordinate (i11) at (11,10);
\coordinate (i12) at (12,7);
\coordinate (i14) at (14,9);

\coordinate (jneg5) at (-5,-5);
\coordinate (jneg4) at (-4,-4);
\coordinate (jneg3) at (-3,0);
\coordinate (jneg2) at (-2,-1);
\coordinate (jneg1) at (-1,-2);
\coordinate (j0) at (0,-3);
\coordinate (j1) at (1,1);
\coordinate (j2) at (2,2);
\coordinate (j3) at (3,6);
\coordinate (j4) at (4,5);
\coordinate (j5) at (5,4);
\coordinate (j6) at (6,3);
\coordinate (j7) at (7,7);
\coordinate (j8) at (8,8);
\coordinate (j9) at (9,12);
\coordinate (j10) at (10,11);
\coordinate (j11) at (11,10);
\coordinate (j12) at (12,9);
\coordinate (j13) at (13,13);
\coordinate (j14) at (14,14);

\fill [lightgray] (i1) -- (j3) -- (i3) -- (j8) -- (i8) -- (j6) -- (i6) -- (j1) -- (i1);
\fill [lightgray] (ineg5) -- (jneg3) -- (ineg3) -- (j2) -- (i2) -- (j0) -- (i0) -- (jneg5) -- (ineg5);
\fill [lightgray] (i7) -- (j9) -- (i9) -- (j14) -- (i14) -- (j12) -- (i12) -- (j7) -- (i7);

\foreach \point in {ineg5,ineg3,ineg2,ineg1,i0,i1,i2,i3,i4,i5,i6,i7,i8,i9,i10,i11,i12,i14}
	{\fill [black] (\point) circle (4pt);}

\foreach \point in {jneg5,jneg4,jneg3,jneg2,jneg1,j0,j1,j2,j3,j4,j5,j6,j7,j8,j9,j10,j11,j12,j13,j14}
	{\draw [red] (\point) node {$\times$};}
\end{tikzpicture}
\end{center}
\caption{$\myomega=[6,-3,8,5,4,1]$ and $\mychi=[1,2,6,5,4,3]$.}
\label{fig:computingR}
\end{figure}  

\subsection{Flattening Affine Permutations}\label{sub:flatten}

Whenever $\mychi_i=\myomega_i$, the function $d_{\mychi ,\myomega }$ and the
Bruhat pictures would remain the same if we removed the entire column
$i$ and row $\mychi_i=\myomega_i$ from the plane.  However, removing some of
these rows and columns would change $\ell(\myomega)-\ell(\mychi)$ and/or
$\# \scrR(\mychi,\myomega)$.  We want to identify which rows and columns
where $\mychi_i=\myomega_i$ can be removed while maintaining the length
difference and $\# \scrR(\mychi,\myomega)$.
This will simplify the Bruhat pictures that follow.

Consider $\myomega \in \wt{S}_n$ as points drawn in the plane.  For $i
\in \mathbb{Z}$, let $\myomega^{\wh{i}}$ be the unique affine
permutation with points obtained from the points of $\myomega$ by
deleting columns $i+kn$ and rows $\myomega_{i}+kn$ for all $k \in
\mathbb{Z}$.  The remaining rows and columns of this infinite array
should now be relabeled in such a way that
$\myomega^{\wh{i}}_{1}+\myomega^{\wh{i}}_{2}+\ldots
+\myomega^{\wh{i}}_{n-1}=\binom{n}{2}$.  This can be done in a unique way since every
window of $n-1$ distinct values mod $n-1$ determines a periodic
function from $\mathbb{Z}$ to $\mathbb{Z}$ with period $n-1$ and some
translate of this window must sum to $\binom{n}{2}$.

\begin{lemma}\label{l:length.difference}
Suppose $\mychi<\myomega\in\wt{S}_n$.  Let $i$ be an integer such that
$\pt_\mychi (i)=\pt_\myomega(i)$ and
$d_{\mychi,\myomega}(\pt_\mychi(i))=d^\prime_{\mychi,\myomega}(\pt_\mychi(i))=0$.
Then,
$\ell(\myomega)-\ell(\mychi)=\ell(\myomega^{\wh{i}})-\ell(\mychi^{\wh{i}})$.
\end{lemma}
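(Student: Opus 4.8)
The plan is to show that deleting column $i$ and row $\myomega_i = \mychi_i$ removes exactly the same number of inversions from $\myomega$ as it does from $\mychi$, so that the two length drops cancel in the difference. Using the characterization $\ell(\myomega) = \#\affinv(\myomega)$ from Proposition~\ref{length}, it suffices to count affine inversions destroyed by the deletion. An affine inversion $(p,q)$ of $\myomega$ (with $1 \le p \le n$, $p < q$, $\myomega_p > \myomega_q$) is destroyed precisely when one of its two indices is $\equiv i \pmod n$, \emph{or} when one of the two values $\myomega_p, \myomega_q$ is $\equiv \myomega_i \pmod n$ (after relabeling to keep the window sum correct, but inversions are insensitive to that relabeling). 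I would split the count into three types: inversions of $\myomega$ of the form (index $\equiv i$, something), (something, value $\equiv \myomega_i$), and the degenerate overlap where both happen at once — being careful with periodicity so that each affine inversion is counted with the correct multiplicity.

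The key computation is to express, for a fixed index $j$, the number of affine inversions of $\myomega$ that involve column $j$ (paired with an index of larger value on the left or smaller value on the right), and similarly the number involving row $\myomega_j$, in terms of the rank functions $r_\myomega$ and $r'_\myomega$ evaluated at or near $\pt_\myomega(j)$. Concretely, the number of entries to the left of column $i$ lying above row $\myomega_i$, plus the number to the right lying below, is governed by quantities that telescope against the corresponding counts for $\mychi$. The hypothesis $d_{\mychi,\myomega}(\pt_\mychi(i)) = d'_{\mychi,\myomega}(\pt_\mychi(i)) = 0$ says exactly that $r_\myomega$ and $r'_\myomega$ agree with $r_\mychi$ and $r'_\mychi$ at the deleted point $\pt_\mychi(i) = \pt_\myomega(i)$; since $\mychi$ and $\myomega$ also agree in column $i$ and in row $\myomega_i$, the rank functions agree on the entire horizontal line through that point and the entire vertical line through it. That is the leverage: the number of inversions through column $i$ and through row $\myomega_i$ is the same for $\mychi$ as for $\myomega$.

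So the main steps, in order: (1) recall $\ell = \#\affinv$ and set up the bookkeeping for which affine inversions are lost under deletion of column $i$ and row $\myomega_i$, handling periodicity so the window $1 \le p \le n$ convention is respected on both sides; (2) write the count of lost inversions for a generic affine permutation $\myomega$ at a generic deleted column/row pair $(j, \myomega_j)$ purely in terms of $r_\myomega$ and $r'_\myomega$ restricted to the horizontal and vertical lines through $\pt_\myomega(j)$, being careful about the single inversion (if any) that is "through column $i$" and "through row $\myomega_i$" simultaneously so it is not double-counted; (3) observe that because $\pt_\mychi(i) = \pt_\myomega(i)$ and the two $d$-functions vanish there, together with $\mychi_i = \myomega_i$ forcing agreement along both lines, the formula from step (2) yields the same number for $\mychi$ as for $\myomega$; (4) subtract to get $\ell(\myomega) - \ell(\mychi) = \ell(\myomega^{\wh{i}}) - \ell(\mychi^{\wh{i}})$, noting the relabeling that normalizes the window sum to $\binom{n}{2}$ is an order-preserving bijection on positions and values and hence preserves inversion counts.

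The main obstacle I anticipate is step (2): making the inversion-counting argument genuinely periodic rather than "finite window" in flavor. When we delete \emph{all} columns $i + kn$ and \emph{all} rows $\myomega_i + kn$, an affine inversion of $\myomega^{\wh{i}}$ does not correspond one-to-one with an affine inversion of $\myomega$ "away from the deleted lines" — the standardization to $1 \le p \le n-1$ shifts things. The cleanest route is probably to bypass $\#\affinv$ in the new $(n-1)$-periodic picture and instead argue directly with Shi's formula (Proposition~\ref{length}) or with the rank-function description, showing $\ell(\myomega^{\wh i})$ equals $\ell(\myomega)$ minus the contribution of the deleted column and row, where that contribution is read off from $r_\myomega$ on the two lines through $\pt_\myomega(i)$; the same subtraction for $\mychi$ gives an identical correction term by the vanishing hypotheses, and the difference survives. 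I would phrase the key intermediate claim as: for any $\myomega\in\wt S_n$ and any integer $i$, $\ell(\myomega) - \ell(\myomega^{\wh i})$ depends only on the values of $r_\myomega$ and $r'_\myomega$ along the horizontal and vertical lines through $\pt_\myomega(i)$, and then invoke the hypotheses to equate the two correction terms.
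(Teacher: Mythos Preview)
Your overall strategy is the paper's: express $\ell(\myomega)-\ell(\myomega^{\wh{i}})$ in terms of rank functions at the deleted position, do the same for $\mychi$, and subtract. The paper does this in one line via
\[
\ell(\myomega)-\ell(\myomega^{\wh{i}}) = r_\myomega(\pt_\myomega(i)) + r'_\myomega(\pt_\myomega(i))
\]
(up to an additive constant independent of $\myomega$, which cancels in the difference): the affine inversions lost are exactly those involving a translate of $\pt_\myomega(i)$, and these are counted by points northwest plus points southeast of that one point. Subtracting gives $d_{\mychi,\myomega}(\pt_\mychi(i))+d'_{\mychi,\myomega}(\pt_\mychi(i))=0$ directly.

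You are overcomplicating step~(2), and step~(3) contains a false claim. Since $\myomega$ is a bijection, the unique point in column $i$ \emph{is} the unique point in row $\myomega_i$; there is no separate ``through the column'' versus ``through the row'' inversion count and hence no double-counting to manage. More importantly, your assertion that $\mychi_i=\myomega_i$ together with $d=d'=0$ at $\pt_\mychi(i)$ forces the rank functions to agree along the entire horizontal and vertical lines through that point is wrong. For instance, with $n=4$, $\myomega=[4,2,3,1]$, $\mychi=[3,2,1,4]$, and $i=2$, one checks $\mychi_2=\myomega_2=2$ and $d_{\mychi,\myomega}(2,2)=d'_{\mychi,\myomega}(2,2)=0$, yet $d_{\mychi,\myomega}(2,4)=1$. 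So your step~(3), as written, does not go through. Fortunately the line-agreement claim is unnecessary: the drop $\ell(\myomega)-\ell(\myomega^{\wh{i}})$ is already determined by $r_\myomega$ and $r'_\myomega$ at the single point $\pt_\myomega(i)$, and the hypotheses give agreement there.
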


\begin{proof}
Recall, $\ell(\myomega)$ is the number of affine inversions for $\myomega$.  We
claim $\ell(\myomega)-\ell(\myomega^{\wh{i}}) =
r_\myomega(\pt_\myomega(i))+r^\prime_\myomega(\pt_\myomega(i))$ since all the
affine inversions which involve a translate of $\pt_\myomega(i)$ can be
represented by a point northwest of $\pt_\myomega(i)$ or southeast of
$\pt_\myomega(i)$.  Therefore,
\begin{align*}
\ell(\myomega)-\ell(\mychi)-(\ell(\myomega^{\wh{i}})-\ell(\mychi^{\wh{i}}))&=\left(\ell(\myomega)-\ell(\myomega^{\wh{i}})\right)-\left(\ell(\mychi)-\ell(\mychi^{\wh{i}})\right)\\
&=r_\myomega(\pt_\myomega(i))+r^\prime_\myomega(\pt_\myomega(i))-\left(r_\mychi(\pt_\mychi(i))+r^\prime_\mychi(\pt_\mychi(i))\right)\\
&=d_{\mychi,\myomega}(\pt_\mychi(i))+d^\prime_{\mychi,\myomega}(\pt_\mychi(i)).
\end{align*}
The last line is 0 by assumption, thus proving the lemma.
\end{proof}

\begin{lemma}\label{l:R.bijection}
Suppose $\mychi<\myomega\in\wt{S}_n$.  Let $i$ be an integer such that
$\mychi_i=\myomega_i$ and $d(i,\myomega_i+1)=d(i-1,\myomega_i)=0$.
Then there is a bijection between
$\scrR(\mychi,\myomega)$ and $\scrR(\mychi^{\wh{i}},\myomega^{\wh{i}}).$
The  analogous statement holds if, instead, $d^\prime(i,\myomega_i-1)=d^\prime(i+1,\myomega_i)=0$.
\end{lemma}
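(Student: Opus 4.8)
The plan is to exhibit the bijection concretely at the level of reflections and check that it respects the membership condition $\mychi < \mychi t \le \myomega$ via the difference-function criterion of Proposition~\ref{length2}. Deleting column $i+kn$ and row $\myomega_i+kn$ for all $k$ induces a monotone, translation-equivariant relabeling of the surviving integers; let $\phi$ denote the induced map on indices (and, since $\mychi_i=\myomega_i$, the same $\phi$ works for values because the deleted rows coincide for $\mychi$ and $\myomega$). Given a reflection $t_{p,q}$ with $p,q \not\equiv i \pmod n$, send it to $t_{\phi(p),\phi(q)}$; since the same rows/columns are deleted from both permutations, $(\mychi t_{p,q})^{\wh{i}} = \mychi^{\wh{i}} t_{\phi(p),\phi(q)}$ and likewise with $\myomega$. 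Reflections $t_{p,q}$ with $p\equiv i$ or $q \equiv i \pmod n$ are exactly the ones that cannot survive, and the first step is to argue they never lie in $\scrR(\mychi,\myomega)$ under our hypotheses: if $t_{p,q}$ with (say) $q \equiv i$ were in $\scrR(\mychi,\myomega)$, then $\mychi < \mychi t_{p,q}$ forces $\mychi_p < \mychi_q = \myomega_q$ while moving the point in column $q\equiv i$, which one shows contradicts $d_{\mychi,\myomega}(i,\myomega_i+1)=0$ (no room above $\pt_\mychi(i)$) together with $\mychi t_{p,q} \le \myomega$; symmetrically for $p \equiv i$ using $d_{\mychi,\myomega}(i-1,\myomega_i)=0$.

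Next I would show the map is a bijection on reflections: $\phi$ is injective on residue classes not congruent to $i$, and every reflection of $\wt{S}_{n-1}$ lifts uniquely by reinserting column $i$ and row $\myomega_i$ (again translation-equivariantly), so the inverse is given by the analogous reinsertion. The only content is that distinct $\wt{S}_n$-reflections survive to distinct $\wt{S}_{n-1}$-reflections, which is clear from monotonicity of $\phi$, and that the surviving set is all of $T_{\wt{S}_{n-1}}$, which follows because any $1 \le p' \le n-1$ and $p' < q'$ with $p' \not\equiv q' \pmod{n-1}$ pulls back to a valid pair.

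Then comes the core step: showing $t_{p,q} \in \scrR(\mychi,\myomega)$ iff $t_{\phi(p),\phi(q)} \in \scrR(\mychi^{\wh{i}},\myomega^{\wh{i}})$. Here I would lean on the shading picture: the rectangles $\scrA_{p,q,k}(\mychi)$ of Lemma~\ref{shading} are, after deleting column $i$ and row $\myomega_i$, carried exactly onto the rectangles $\scrA_{\phi(p),\phi(q),k}(\mychi^{\wh{i}})$, because the hypotheses $d_{\mychi,\myomega}(i,\myomega_i+1)=d_{\mychi,\myomega}(i-1,\myomega_i)=0$ (equivalently $\pt_\mychi(i)=\pt_\myomega(i)$ is not ``in the way'') guarantee the deleted row/column is never a boundary that a relevant rectangle straddles — precisely, $\mychi_p < \mychi_q$ with both $\not\equiv i$ means $\myomega_i \notin [\mychi_p+1,\mychi_q]$ in the pertinent window, since otherwise the point in column $i$ would force $d_{\mychi,\myomega}$ or $d'_{\mychi,\myomega}$ positive at $\pt_\mychi(i)$. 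Consequently the value of $d_{\mychi,\myomega}$ at each lattice point of $\scrA_{p,q,k}(\mychi)$ equals the value of $d_{\mychi^{\wh{i}},\myomega^{\wh{i}}}$ at its image (deleting a row/column where $d$ and $d'$ vanish does not change $d$ elsewhere, by the same bookkeeping as in Lemma~\ref{l:length.difference}), so the criterion of Lemma~\ref{shading} holds on one side iff it holds on the other. For the converse direction (membership downstairs implies membership upstairs) one also needs $\mychi t_{p,q} \le \myomega$ to be detectable after reinsertion, which again is immediate since reinserting the common point $\pt_\mychi(i)=\pt_\myomega(i)$ leaves every $d_{\mychi,\myomega}(p',q')\ge 0$ unchanged in sign.

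The main obstacle is the bookkeeping in that last step — verifying that no relevant shaded rectangle $\scrA_{p,q,k}$ has the deleted row $\myomega_i+kn$ or deleted column $i+kn$ passing through its interior in a way that would change overlap multiplicities. This is where the two vanishing conditions $d(i,\myomega_i+1)=d(i-1,\myomega_i)=0$ (and their primed analogues) are doing real work, and I would spell out the short case analysis on whether $i \pmod n$ lies in the index window $[p,q-1]$ and whether $\myomega_i \pmod n$ lies in the value window $[\mychi_p+1,\mychi_q]$, ruling out the bad configurations. The primed statement ($d'(i,\myomega_i-1)=d'(i+1,\myomega_i)=0$) is proved by the identical argument after reflecting the picture through the point $\pt_\mychi(i)$, which swaps the roles of $r_\myomega$ and $r'_\myomega$, so I would simply note that symmetry rather than repeat it.
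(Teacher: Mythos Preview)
Your approach is essentially the paper's: rule out reflections touching the residue class of $i$ using the vanishing hypotheses, and for the remaining reflections argue that the difference function is unchanged by deletion. The paper's execution is considerably cleaner, however, and you introduce one claim that is both false and unnecessary.

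The paper's key observation is simply that, since $\mychi_i=\myomega_i$, deleting the common point (and its translates) subtracts the same quantity from both rank functions, so $d_{\mychi,\myomega}$ and $d_{\mychi^{\wh{i}},\myomega^{\wh{i}}}$ agree at every surviving lattice point. That single sentence replaces your entire ``core step'' and ``main obstacle'' paragraphs: once the difference functions agree, Lemma~\ref{shading} (or Proposition~\ref{length2} directly) transfers the condition $\mychi t_{p,q}\le\myomega$ back and forth with no case analysis on how the rectangles sit relative to the deleted row and column.

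Your assertion that the hypotheses force $\myomega_i\notin[\mychi_p+1,\mychi_q]$ --- i.e., that the deleted point never lies inside a relevant shading rectangle --- is not true in general, and your justification (``otherwise the point in column $i$ would force $d_{\mychi,\myomega}$ or $d'_{\mychi,\myomega}$ positive at $\pt_\mychi(i)$'') does not hold up: the hypotheses control $d$ at the two specific points $(i,\myomega_i+1)$ and $(i-1,\myomega_i)$, not at arbitrary points in column $i$. Fortunately you do not need this claim. The rectangle $\scrA_{p,q,k}(\mychi)$ may well contain $(i,\myomega_i)$; what matters is only that the $d$-values match across the deletion, which they do automatically. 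Drop the rectangle-avoidance discussion and the argument goes through exactly as in the paper.

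For the exclusion step, the paper's phrasing is also tighter than yours: if $t_{i,b}\in\scrR(\mychi,\myomega)$ with $i<b$, then $(i,\myomega_i+1)\in\scrA_{i,b,0}(\mychi)$, so $d_{\mychi t_{i,b},\myomega}(i,\myomega_i+1)\le d_{\mychi,\myomega}(i,\myomega_i+1)-1=-1$, contradicting $\mychi t_{i,b}\le\myomega$; the case $t_{a,i}$ with $a<i$ uses $(i-1,\myomega_i)$ symmetrically.
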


\begin{proof}
Consider the rows and columns of $\myomega^{\wh{i}}$ to be labeled as a
subset of the rows and columns of $\myomega$ instead of relabeling.
Since $\mychi_{i} =\myomega_{i}$, the values of $d_{\mychi,\myomega}$ and
$d_{\mychi^{\wh{i}},\myomega^{\wh{i}}}$ agree at all the points where $d_{\mychi^{\wh{i}},\myomega^{\wh{i}}}$ is defined.
Hence, by Lemma~\ref{shading}, for every $t_{a,b}\in\scrR(\mychi^{\wh{i}},\myomega^{\wh{i}})$,
there is a corresponding reflection in $\scrR(\mychi,\myomega)$.

Conversely, we claim every $t_{a,b} \in \scrR(\mychi,\myomega)$ has
$a\not\equiv i$ and $b\not\equiv i\pmod{n}$ and so corresponds with a
reflection in $\scrR(\mychi^{\wh{i}},\myomega^{\wh{i}})$, by Lemma~\ref{shading} again.  
To prove the claim, assume that there exists a $t_{i,b} \in \scrR(\mychi,\myomega)$ with $i<b$.
By hypothesis, $d_{\mychi,\myomega}(i,\myomega_{i}+1)=0$, so $d_{\mychi t_{i,b},\myomega}(i,\myomega_{i}+1)<0$,
which implies $\mychi t_{i,b} \not\leq \myomega$, contradicting the assumption that $t_{i,b}\in\scrR(\mychi,\myomega)$.
Similarly, since $d_{\mychi,\myomega}(i-1,\myomega_{i})=0$, no $t_{a,i}\in\scrR(\mychi,\myomega)$ exists for $a<i$.
The same proof works if, instead, $d^\prime(i,\myomega_i-1)=d^\prime(i+1,\myomega_i)=0$.
\end{proof}

We say the pair $\mychi <\myomega$ is \emph{flattened} if no index
$i$ exists such that 
\begin{enumerate}
\item $\mychi_i=\myomega_i$,
\item $d_{\mychi,\myomega}(\pt_\mychi(i))=d^\prime_{\mychi,\myomega}(\pt_\mychi(i))=0$, and
\item $d(i,\myomega_i+1)=d(i-1,\myomega_i)=0$\ or \ $d^\prime(i,\myomega_i-1)=d^\prime(i+1,\myomega_i)=0$.
\end{enumerate}
Otherwise, the pair is \emph{flattenable}.  Observe that
$\mychi^{\wh{i}}<\myomega^{\wh{i}}$ since $\mychi <\myomega$ and the values of
$d_{\mychi,\myomega}$ and $d_{\mychi^{\wh{i}},\myomega^{\wh{i}}}$ agree at all
the points where $d_{\mychi^{\wh{i}},\myomega^{\wh{i}}}$ is defined.  Let
$\wt{\mychi } <\wt{\myomega}$ be the \textit{flattened} pair obtained from
$\mychi <\myomega$ by iteratively removing every possible index until the
pair is flattened.  Then by Lemma~\ref{l:length.difference} and
Lemma~\ref{l:R.bijection} we have the following corollary.

\begin{cor}\label{cor:flatten}
Suppose $\mychi<\myomega\in\wt{S}_n$.  Then
$\#\scrR(\mychi,\myomega)=\ell(\myomega)-\ell(\mychi)$ if and only if
$\#\scrR(\wt{\mychi},\wt{\myomega})=\ell(\wt{\myomega})-\ell(\wt{\mychi})$.
\end{cor}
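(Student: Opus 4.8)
The plan is to prove Corollary~\ref{cor:flatten} as a direct consequence of the two preceding lemmas, by induction on the number of indices removed in the flattening process. First I would observe that the flattening procedure terminates: each application of $\myomega\mapsto\myomega^{\wh i}$ reduces $n$ by one, so after finitely many steps we reach a flattened pair $\wt\mychi<\wt\myomega$. Thus it suffices to prove the equivalence for a single flattening step, i.e.\ to show that if some index $i$ satisfies conditions (1)--(3) in the definition of flattenable, then $\#\scrR(\mychi,\myomega)=\ell(\myomega)-\ell(\mychi)$ holds if and only if $\#\scrR(\mychi^{\wh i},\myomega^{\wh i})=\ell(\myomega^{\wh i})-\ell(\mychi^{\wh i})$; the general statement then follows by transitivity along the (finite) chain of flattening steps.

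For the single step, I would combine Lemma~\ref{l:length.difference} and Lemma~\ref{l:R.bijection}. Note that condition~(2) in the definition of flattenable is precisely the hypothesis of Lemma~\ref{l:length.difference}, which gives $\ell(\myomega)-\ell(\mychi)=\ell(\myomega^{\wh i})-\ell(\mychi^{\wh i})$. Condition~(3) is precisely one of the two alternative hypotheses of Lemma~\ref{l:R.bijection} (noting that $d(i,\myomega_i+1)=d(i-1,\myomega_i)=0$ means $d_{\mychi,\myomega}(\pt_\mychi(i))$-adjacent values vanish, matching the lemma's statement after translating notation), which gives a bijection $\scrR(\mychi,\myomega)\leftrightarrow\scrR(\mychi^{\wh i},\myomega^{\wh i})$ and hence $\#\scrR(\mychi,\myomega)=\#\scrR(\mychi^{\wh i},\myomega^{\wh i})$. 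Since both sides of the desired biconditional are transformed into the corresponding sides for the pair $\mychi^{\wh i}<\myomega^{\wh i}$ by equalities, the equivalence for one step is immediate, and one should also check the easy remark (already made in the text) that $\mychi^{\wh i}<\myomega^{\wh i}$ so that the lemmas even apply at the next stage.

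The only genuinely delicate point is bookkeeping: I would need to confirm that the relabeling of rows and columns after deletion does not disturb the correspondence, but this is exactly the content of Lemma~\ref{l:R.bijection}'s proof, where one works with the labels of $\myomega^{\wh i}$ as a subset of those of $\myomega$; and that conditions (1)--(3) are preserved well enough that the iterative procedure is well-defined regardless of the order in which removable indices are chosen — but for the statement of the corollary we only need \emph{some} terminating sequence of removals producing a flattened pair, and the numerical invariants $\#\scrR$ and $\ell(\myomega)-\ell(\mychi)$ are preserved at every step, so any two flattenings give the same answer for the biconditional. I expect the main (mild) obstacle to be simply writing the induction cleanly, keeping the notational shift $d$ versus $d_{\mychi,\myomega}$ and $\pt_\mychi(i)$ versus coordinate pairs consistent so that conditions (2) and (3) visibly match the hypotheses of Lemmas~\ref{l:length.difference} and~\ref{l:R.bijection}.
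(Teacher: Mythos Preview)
Your proposal is correct and matches the paper's approach exactly: the paper does not write out a separate proof for this corollary, but simply states it as an immediate consequence of Lemma~\ref{l:length.difference} and Lemma~\ref{l:R.bijection}, which is precisely the single-step argument you describe, iterated along the finite chain of flattening steps. Your added remarks about termination, the preservation of $\mychi^{\wh i}<\myomega^{\wh i}$, and the irrelevance of the order of removals are all accurate and fill in details the paper leaves implicit.
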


\section{When $\myomega$ Contains 3412}
\label{s:3412}

The goal of this section is to prove that any affine permutation
$\myomega$ containing a 3412 pattern is either a twisted spiral element or there
exists a $\mychi < \myomega $ such that $\# \scrR(\mychi,\myomega) >
\ell(\myomega).$   In the latter case, this implies $\poin_\myomega(q)$ is not
palindromic and $X_\myomega$ is not rationally smooth by
Theorem~\ref{CarrellPeterson}.    We simplify the proofs by squeezing the
pattern down as much as possible to a \textit{normalized} 3412 pattern
and \textit{flattening} $\mychi,\myomega$ down to the smallest
$\wt{S}_{n}$ possible.  The key tool we use is the affine Bruhat pictures defined in Section~\ref{s:pictures}. 
The arguments generalize \cite[Section 4]{BilleyWarrington} to affine permutations.  

\begin{thm}
\label{thm:3412}
If $\myomega\in\wt{S}_n$ contains the pattern 3412, but is not a twisted spiral permutation, then $\poin_\myomega(q)$ is not palindromic.
\end{thm}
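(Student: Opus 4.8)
The plan is to verify that criterion (3) of Proposition~\ref{CarrellPeterson} fails, i.e.\ to produce a single element $\mychi\le\myomega$ with $\#\scrR(\mychi,\myomega)>\ell(\myomega)-\ell(\mychi)$; by the equivalence with (2) this shows $\poin_\myomega(q)$ is not palindromic, hence $X_\myomega$ is not rationally smooth. By Corollary~\ref{propercase} we may assume $\myomega$ lies in no proper parabolic subgroup of $\wt{S}_n$ (if it did, it would already be non-palindromic), so $\myomega$ satisfies the window condition of Corollary~\ref{parabolic1cor}. I would then \emph{normalize} the pattern: among all quadruples $a_1<a_2<a_3<a_4$ realizing $3412$ in $\myomega$ (so $\myomega_{a_3}<\myomega_{a_4}<\myomega_{a_1}<\myomega_{a_2}$), choose one in which $a_1,a_3$ are pushed as far right and $a_2,a_4$ as far left as the pattern permits, and with no value lying strictly between the pattern entries in the relevant index ranges, mirroring the finite-type normalization of \cite[\S4]{BilleyWarrington}. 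This leaves only finitely many combinatorial types, which --- as in the proof of Theorem~\ref{thm:4231} --- are organized by how $a_1,a_2,a_3,a_4$ interleave with the fundamental window $[\myomega_{a_1},\dots,\myomega_{a_1+n-1}]$ and its translate by $n$.

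For each type I would construct $\mychi$ from $\myomega$ by an explicit short chain of Bruhat covers that partially untangles the normalized $3412$, again following \cite[\S4]{BilleyWarrington}, so that $\ell(\myomega)-\ell(\mychi)$ is determined by the type and the difference function $d_{\mychi,\myomega}$ becomes a small staircase supported near the pattern, as recorded in the affine Bruhat picture of Section~\ref{s:pictures}. Then, using Lemma~\ref{shading}, each periodic family of rectangles $\scrA_{p,q,k}(\mychi)$ that lies underneath the shading of $d_{\mychi,\myomega}$ certifies a reflection $t_{p,q}\in\scrR(\mychi,\myomega)$, and the point is to count strictly more such reflections than $\ell(\myomega)-\ell(\mychi)$. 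Since $d_{\mychi,\myomega}$ is $n$-periodic, once the type is fixed the count is a finite verification, and Corollary~\ref{cor:flatten} first reduces $(\mychi,\myomega)$ to a bounded-size $\wt{S}_m$, so each case can be checked inside a small group.

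The step I expect to be the main obstacle --- and the genuinely new feature compared with the finite Lakshmibai--Sandhya theorem --- is isolating the twisted spiral permutations. There will be exactly one family of normalized configurations for which the construction above yields $\#\scrR(\mychi,\myomega)=\ell(\myomega)-\ell(\mychi)$ and no bad $\mychi$ is produced. The plan is to show that, in precisely these configurations, the combinatorial rigidity forced on the one-line notation --- every value between $\myomega_{a_3}$ and $\myomega_{a_2}$ must be pinned inside a single window, which via Corollary~\ref{parabolic1cor} and the rank criterion Proposition~\ref{length2} forces a decreasing-staircase shape --- makes $\myomega$, after an index shift, equal to $\myomega_0^{J_i}c(i,k(n-1))$ for some $1\le i\le n$ and $k\ne0$, i.e.\ a twisted spiral permutation in the sense of \eqref{e:spirals}. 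Concretely, one identifies $m(\myomega,J_i)$ with the longest element $\myomega_0^{J_i}$ and reads off the complementary factor $c(i,k(n-1))$ from the parabolic decomposition of Proposition~\ref{decomp}. Verifying that this ``failure locus'' is exactly the spiral family --- no larger, which would contradict Proposition~\ref{BilleyPostDecomp} and the discussion in Section~\ref{s:spirals}, and with no cases overlooked --- is the crux of the proof, and the normalization and flattening reductions are precisely what keep this bookkeeping manageable.
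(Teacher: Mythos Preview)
Your outline is essentially the paper's approach: normalize the 3412 pattern, do a finite case analysis, and in each non-spiral case exhibit an explicit $\mychi\le\myomega$ with $\#\scrR(\mychi,\myomega)>\ell(\myomega)-\ell(\mychi)$ using the affine Bruhat pictures and Corollary~\ref{cor:flatten}, while the residual rigid case is identified as a twisted spiral.

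Two points where the paper's organization differs from your sketch are worth flagging. First, the paper immediately invokes Theorem~\ref{thm:4231} to assume $\myomega$ avoids $4231$; this is what forces regions $A$, $B$, $C$ (and later $E$) in the normalized picture to be empty or decreasing, and without it the case list balloons. You do not state this reduction, and your normalization ``with no value lying strictly between the pattern entries'' does not by itself give it. Second, the paper's case split is not by how the pattern indices interleave a window as in Section~\ref{Converse}, but rather by (i) whether the central region $B$ is nonempty, and (ii) the congruence classes of $i_1,i_2,i_3,i_4$ mod $n$. In the sub-case ``$B$ empty and $i_3\not\equiv i_4$ or $i_1\not\equiv i_2$'' the paper does \emph{not} construct an $\mychi$ at all; it reverts to the edge-counting argument of Section~\ref{Converse} to show $c_1<c_{\ell(\myomega)-1}$ directly. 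Your plan to handle every case uniformly via an explicit $\mychi$ and flattening may still work, but the paper found it easier to mix the two techniques, and the twisted spiral family emerges precisely in the remaining sub-case ``$B$ and $E$ empty, exactly one of $D,F$ nonempty with all values decreasing.''
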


\begin{proof}
Suppose $\myomega\in\wt{S}_n$ contains 3412 at indices
$i_3<i_4<i_1<i_2$.  By Theorem~\ref{thm:4231} we may also assume that
$\myomega$ avoids the pattern 4231.
We normalize the pattern that occurs by squeezing it together as far
as possible.  This is accomplished using the following procedure.

\begin{enumerate}
\item Let $i_4^\prime<i_1$ be the maximal index such that there exists some
	$j<i_4^\prime$ with $\myomega_{i_4^\prime}>\myomega_j>\myomega_{i_{2}}$.
\item Let $i_1^\prime>i_4^\prime$ be the minimal index such that there exists some
	$k>i_1^\prime$ with $\myomega_{i_1^\prime}<\myomega_k<\myomega_{j}$.  
\item  Let $i_3^\prime<i_4^\prime$ be the maximal index such that
	$\myomega_{k}<\myomega_{i_3^\prime}<\myomega_{i_4^\prime}$.  
\item Let $i_2^\prime>i_1^\prime$ be the minimal index such that
	$\myomega_{i_1^\prime}<\myomega_{i_2^\prime}<\myomega_{i_3^\prime}$.  
\end{enumerate}

\noindent
Keep repeating this process until no more changes occur.
Since $0<i^\prime_1-i^\prime_4<i_1-i_4$, this process will eventually terminate.
Renaming $i_j^\prime$ to $i_j$ gives a 3412 pattern as in Figure~\ref{fig:normalize},
where the dotted regions contains no points in $\myomega$ including the unbounded
regions above and below the pattern.  Furthermore, since $\myomega$ avoids the
pattern 4231, any values of $\myomega$ lying in regions $A$, $B$, or $C$
must occur in decreasing order.

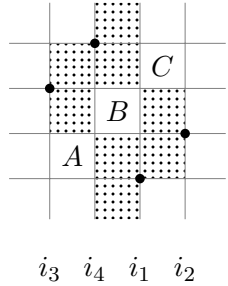
\begin{figure}[h]
\begin{center}
\begin{tikzpicture}
\pgftransformscale{0.6}

\coordinate (i3) at (0,2);
\coordinate (i4) at (1,3);
\coordinate (i1) at (2,0);
\coordinate (i2) at (3,1);

\fill [pattern=dots] (0,1) -- (0,3) -- (1,3) -- (1,3.9) -- (2,3.9) -- (2,2) -- (1,2) -- (1,1) -- (0,1);
\fill [pattern=dots] (1,-0.9) -- (1,1) -- (2,1) -- (2,2) -- (3,2) -- (3,0) -- (2,0) -- (2,-0.9) -- (1,-0.9);
\draw[step=1,gray,very thin] (-0.9,-0.9) grid (3.9,3.9);
\foreach \point in {i3,i4,i1,i2}
	{\fill [black] (\point) circle (3pt);}

\draw (0,-2) node{$i_3$};
\draw (1,-2) node{$i_4$};
\draw (2,-2) node{$i_1$};
\draw (3,-2) node{$i_2$};
\draw (0.5,0.5) node{$A$};
\draw (1.5,1.5) node{$B$};
\draw (2.5,2.5) node{$C$};

\end{tikzpicture}
\end{center}
\caption{Normalization of the $3412$ pattern.}
\label{fig:normalize}
\end{figure}

The proof now proceeds by cases
depending on whether or not region $B$ is empty and the number of
congruence classes among $\{i_{1},i_{2},i_{3},i_{4} \}$ mod $n$.  Note
that $i_{1} \equiv i_{2}$ and $i_{3}\equiv i_{4}$ are the only
possible congruences among these indices, since they index a 3412
pattern.

\subsection{When Region $B$ is Nonempty.}  By the normalization of the
3412 pattern, we must have
$\myomega_{i_3}>\myomega_{i_4+1}>\cdots>\myomega_{i_4+k}>\myomega_{i_2}$,
where $k=i_1-i_4-1\ge1$.  We will refer to such a pattern as a
$45312_k$ pattern.  Regions $A$ and $C$ must be empty or else
$\myomega$ contains a 4231 pattern.  Thus, we have the picture as in
Figure~\ref{fig:normalize45312}, where the dotted regions are empty,
including the unbounded regions on all 4 sides.  

\begin{figure}[h]
\begin{center}
\begin{tikzpicture}
\pgftransformscale{0.6}

\coordinate (i4) at (0,4);
\coordinate (i5) at (1,5);
\coordinate (a1) at (2,3);
\coordinate (ak) at (3,2);
\coordinate (i1) at (4,0);
\coordinate (i2) at (5,1);

\fill [pattern=dots] (0,0) -- (0,1) -- (-0.9,1) -- (-0.9,3) -- (0,3) -- (0,5) -- (1,5) -- (1,5.9) -- (4,5.9) -- (4,5) -- (5,5) -- (5,4) -- (5.9, 4) -- (5.9,2)
	-- (5,2) -- (5,0) -- (4,0) -- (4,-0.9) -- (1,-0.9) -- (1,0) -- (0,0);
\draw[step=1,gray,very thin] (-0.9,-0.9) grid (5.9,5.9);
\foreach \point in {i4,i5,a1,ak,i1,i2}
	{\fill [black] (\point) circle (3pt);}

\draw (0,-2) node{$i_3$};
\draw (1,-2) node{$i_4$};
\draw (4,-2) node{$i_1$};
\draw (5,-2) node{$i_2$};

\draw[loosely dotted,thick] (a1) -- (ak);

\end{tikzpicture}
\end{center}
\caption{Normalized $45312_k$ pattern.}
\label{fig:normalize45312}
\end{figure}
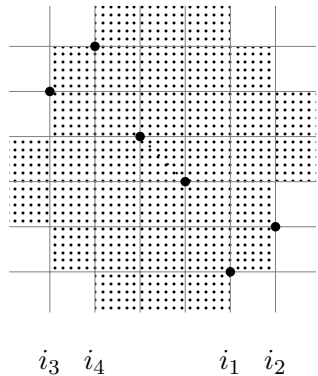

\begin{enumerate}[\text{Case} 1:]
\item All residues are distinct among $\{i_1,i_2,i_3,i_4\}$.\\
	Let $\mychi=\myomega t_{i_3i_2}t_{i_4i_2}t_{i_3i_1}$.
	Based on Corollary~\ref{cor:flatten} we can replace $\mychi$ with $\wt{\mychi}$ and $\myomega$ with $\wt{\myomega}$.
	Hence, we may assume that $n=k+4$ and that the indices $\{i_1,i_2,i_3,i_4,i_4+1,\dotsc,i_4+k\}$
	are all of the distinct residues mod $n$.
	The next step is to consider how the translated values of
	$\myomega_{i_1},\myomega_{i_2},\myomega_{i_3},\myomega_{i_4},\myomega_{i_4+1},\dots,\myomega_{i_4+k}$
	intersperse the $45312_k$ pattern.
	
	We claim that, assuming the 3412 pattern is normalized, the pair $\mychi,\myomega$ is flattened, and $\myomega$ does not contain 4231,
	there are only 4 possible interspersed arrangements as shown in Figure~\ref{fig:alldistinct}.
	Since the pair $\mychi,\myomega$ is flattened, we can assume there are no
	points in the affine Bruhat picture outside the shaded regions.
	Examining the affine Bruhat pictures in Figure~\ref{fig:alldistinct}, we see that, in the first picture,
	$\ell(\myomega)-\ell(\mychi)=2k+3$ and $\#\scrR(\mychi,\myomega)\ge2k+4$.
	In the remaining three pictures, we have
	$\ell(\myomega)-\ell(\mychi)=2k+5$ and $\#\scrR(\mychi,\myomega)>2k+5$.
	Hence,  in all of these cases, $\poin_\myomega(q)$ cannot be palindromic by Proposition~\ref{CarrellPeterson}.
	
	\begin{figure}
	\begin{center}
	\begin{displaymath}
	\begin{array}{c|c}
	\begin{tikzpicture}
	\pgftransformscale{0.3}

	\coordinate (ineg5) at (-5,-1);
	\coordinate (ineg4) at (-4,0);
	\coordinate (ineg3) at (-3,-2);
	\coordinate (ineg2) at (-2,-3);
	\coordinate (ineg1) at (-1,-5);
	\coordinate (i0) at (0,-4);
	\coordinate (i1) at (1,5);
	\coordinate (i2) at (2,6);
	\coordinate (i3) at (3,4);
	\coordinate (i4) at (4,3);
	\coordinate (i5) at (5,1);
	\coordinate (i6) at (6,2);
	\coordinate (i7) at (7,11);
	\coordinate (i8) at (8,12);
	\coordinate (i9) at (9,10);
	\coordinate (i10) at (10,9);
	\coordinate (i11) at (11,7);
	\coordinate (i12) at (12,8);

	\coordinate (jneg5) at (-5,-5);
	\coordinate (jneg4) at (-4,-1);
	\coordinate (jneg3) at (-3,-2);
	\coordinate (jneg2) at (-2,-3);
	\coordinate (jneg1) at (-1,-4);
	\coordinate (j0) at (0,0);
	\coordinate (j1) at (1,1);
	\coordinate (j2) at (2,5);
	\coordinate (j3) at (3,4);
	\coordinate (j4) at (4,3);
	\coordinate (j5) at (5,2);
	\coordinate (j6) at (6,6);
	\coordinate (j7) at (7,7);
	\coordinate (j8) at (8,11);
	\coordinate (j9) at (9,10);
	\coordinate (j10) at (10,9);
	\coordinate (j11) at (11,8);
	\coordinate (j12) at (12,12);

	\fill [lightgray] (j1) -- (i1) -- (j2) -- (i2) -- (j6) -- (i6) -- (j5) -- (i5) -- (j1);
	\fill [lightgray] (jneg5) -- (ineg5) -- (jneg4) -- (ineg4) -- (j0) -- (i0) -- (jneg1) -- (ineg1) -- (jneg5);
	\fill [lightgray] (j7) -- (i7) -- (j8) -- (i8) -- (j12) -- (i12) -- (j11) -- (i11) -- (j7);

	\draw (j1) -- (i1) -- (j2) -- (i2) -- (j6) -- (i6) -- (j5) -- (i5) -- (j1);
	\draw (jneg5) -- (ineg5) -- (jneg4) -- (ineg4) -- (j0) -- (i0) -- (jneg1) -- (ineg1) -- (jneg5);
	\draw (j7) -- (i7) -- (j8) -- (i8) -- (j12) -- (i12) -- (j11) -- (i11) -- (j7);

	\foreach \point in {ineg5,ineg4,ineg3,ineg2,ineg1,i0,i1,i2,i3,i4,i5,i6,i7,i8,i9,i10,i11,i12}
		{\fill [black] (\point) circle (4pt);}

	\foreach \point in {jneg5,jneg4,jneg3,jneg2,jneg1,j0,j1,j2,j3,j4,j5,j6,j7,j8,j9,j10,j11,j12}
		{\draw [red] (\point) node {$\times$};}
		
	\draw[loosely dotted,thick] (i3) -- (i4);
	\draw[loosely dotted,thick] (ineg3) -- (ineg2);
	\draw[loosely dotted,thick] (i9) -- (i10);
	\end{tikzpicture}
	&
	\begin{tikzpicture}
	\pgftransformscale{0.3}

	\coordinate (ineg5) at (-5,0);
	\coordinate (ineg4) at (-4,-9);
	\coordinate (ineg3) at (-3,2);
	\coordinate (ineg2) at (-2,-1);
	\coordinate (ineg1) at (-1,-2);
	\coordinate (i0) at (0,-5);
	\coordinate (i1) at (1,6);
	\coordinate (i2) at (2,-3);
	\coordinate (i3) at (3,8);
	\coordinate (i4) at (4,5);
	\coordinate (i5) at (5,4);
	\coordinate (i6) at (6,1);
	\coordinate (i7) at (7,12);
	\coordinate (i8) at (8,3);
	\coordinate (i9) at (9,14);
	\coordinate (i10) at (10,11);
	\coordinate (i11) at (11,10);
	\coordinate (i12) at (12,7);
	\coordinate (i14) at (14,9);

	\coordinate (jneg5) at (-5,-5);
	\coordinate (jneg4) at (-4,-4);
	\coordinate (jneg3) at (-3,0);
	\coordinate (jneg2) at (-2,-1);
	\coordinate (jneg1) at (-1,-2);
	\coordinate (j0) at (0,-3);
	\coordinate (j1) at (1,1);
	\coordinate (j2) at (2,2);
	\coordinate (j3) at (3,6);
	\coordinate (j4) at (4,5);
	\coordinate (j5) at (5,4);
	\coordinate (j6) at (6,3);
	\coordinate (j7) at (7,7);
	\coordinate (j8) at (8,8);
	\coordinate (j9) at (9,12);
	\coordinate (j10) at (10,11);
	\coordinate (j11) at (11,10);
	\coordinate (j12) at (12,9);
	\coordinate (j13) at (13,13);
	\coordinate (j14) at (14,14);

	\fill [lightgray] (j1) -- (i1) -- (j3) -- (i3) -- (j8) -- (i8) -- (j6) -- (i6) -- (j1);
	\fill [lightgray] (jneg5) -- (ineg5) -- (jneg3) -- (ineg3) -- (j2) -- (i2) -- (j0) -- (i0) -- (jneg5);
	\fill [lightgray] (j7) -- (i7) -- (j9) -- (i9) -- (j14) -- (i14) -- (j12) -- (i12) -- (j7);

	\draw (j1) -- (i1) -- (j3) -- (i3) -- (j8) -- (i8) -- (j6) -- (i6) -- (j1);
	\draw (jneg5) -- (ineg5) -- (jneg3) -- (ineg3) -- (j2) -- (i2) -- (j0) -- (i0) -- (jneg5);
	\draw (j7) -- (i7) -- (j9) -- (i9) -- (j14) -- (i14) -- (j12) -- (i12) -- (j7);

	\foreach \point in {ineg5,ineg3,ineg2,ineg1,i0,i1,i2,i3,i4,i5,i6,i7,i8,i9,i10,i11,i12,i14}
		{\fill [black] (\point) circle (4pt);}

	\foreach \point in {jneg5,jneg4,jneg3,jneg2,jneg1,j0,j1,j2,j3,j4,j5,j6,j7,j8,j9,j10,j11,j12,j13,j14}
		{\draw [red] (\point) node {$\times$};}
		
	\draw[loosely dotted,thick] (i4) -- (i5);
	\draw[loosely dotted,thick] (ineg2) -- (ineg1);
	\draw[loosely dotted,thick] (i10) -- (i11);
	\end{tikzpicture}
	\\\hline
	\begin{tikzpicture}
	\pgftransformscale{0.3}

	\coordinate (ineg5) at (-5,0);
	\coordinate (ineg4) at (-4,-10);
	\coordinate (ineg3) at (-3,5);
	\coordinate (ineg2) at (-2,-2);
	\coordinate (ineg1) at (-1,-3);
	\coordinate (i0) at (0,-5);
	\coordinate (i1) at (1,6);
	\coordinate (i2) at (2,-4);
	\coordinate (i3) at (3,11);
	\coordinate (i4) at (4,4);
	\coordinate (i5) at (5,3);
	\coordinate (i6) at (6,1);
	\coordinate (i7) at (7,12);
	\coordinate (i8) at (8,2);
	\coordinate (i9) at (9,17);
	\coordinate (i10) at (10,10);
	\coordinate (i11) at (11,9);
	\coordinate (i12) at (12,7);
	\coordinate (i14) at (14,8);

	\coordinate (jneg5) at (-5,-5);
	\coordinate (jneg4) at (-4,-1);
	\coordinate (jneg3) at (-3,0);
	\coordinate (jneg2) at (-2,-2);
	\coordinate (jneg1) at (-1,-3);
	\coordinate (j0) at (0,-4);
	\coordinate (j1) at (1,1);
	\coordinate (j2) at (2,5);
	\coordinate (j3) at (3,6);
	\coordinate (j4) at (4,4);
	\coordinate (j5) at (5,3);
	\coordinate (j6) at (6,2);
	\coordinate (j7) at (7,7);
	\coordinate (j8) at (8,11);
	\coordinate (j9) at (9,12);
	\coordinate (j10) at (10,10);
	\coordinate (j11) at (11,9);
	\coordinate (j12) at (12,8);
	\coordinate (j13) at (13,13);
	\coordinate (j14) at (14,17);

	\fill [lightgray] (j1) -- (i1) -- (j3) -- (i3) -- (j8) -- (i8) -- (j6) -- (i6) -- (j1);
	\fill [lightgray] (jneg5) -- (ineg5) -- (jneg3) -- (ineg3) -- (j2) -- (i2) -- (j0) -- (i0) -- (jneg5);
	\fill [lightgray] (j7) -- (i7) -- (j9) -- (i9) -- (j14) -- (i14) -- (j12) -- (i12) -- (j7);

	\draw (j1) -- (i1) -- (j3) -- (i3) -- (j8) -- (i8) -- (j6) -- (i6) -- (j1);
	\draw (jneg5) -- (ineg5) -- (jneg3) -- (ineg3) -- (j2) -- (i2) -- (j0) -- (i0) -- (jneg5);
	\draw (j7) -- (i7) -- (j9) -- (i9) -- (j14) -- (i14) -- (j12) -- (i12) -- (j7);

	\foreach \point in {ineg5,ineg3,ineg2,ineg1,i0,i1,i2,i3,i4,i5,i6,i7,i8,i9,i10,i11,i12,i14}
		{\fill [black] (\point) circle (4pt);}

	\foreach \point in {jneg5,jneg4,jneg3,jneg2,jneg1,j0,j1,j2,j3,j4,j5,j6,j7,j8,j9,j10,j11,j12,j13,j14}
		{\draw [red] (\point) node {$\times$};}

	\draw[loosely dotted,thick] (i4) -- (i5);
	\draw[loosely dotted,thick] (ineg2) -- (ineg1);
	\draw[loosely dotted,thick] (i10) -- (i11);
	\end{tikzpicture}
	&
	\begin{tikzpicture}
	\pgftransformscale{0.3}

	\coordinate (ineg5) at (-5,1);
	\coordinate (ineg4) at (-4,-9);
	\coordinate (ineg3) at (-3,2);
	\coordinate (ineg2) at (-2,0);
	\coordinate (ineg1) at (-1,-1);
	\coordinate (i0) at (0,-8);
	\coordinate (i1) at (1,7);
	\coordinate (i2) at (2,-3);
	\coordinate (i3) at (3,8);
	\coordinate (i4) at (4,6);
	\coordinate (i5) at (5,5);
	\coordinate (i6) at (6,-2);
	\coordinate (i7) at (7,13);
	\coordinate (i8) at (8,3);
	\coordinate (i9) at (9,14);
	\coordinate (i10) at (10,12);
	\coordinate (i11) at (11,11);
	\coordinate (i12) at (12,4);
	\coordinate (i14) at (14,9);

	\coordinate (jneg5) at (-5,-8);
	\coordinate (jneg4) at (-4,-4);
	\coordinate (jneg3) at (-3,1);
	\coordinate (jneg2) at (-2,0);
	\coordinate (jneg1) at (-1,-1);
	\coordinate (j0) at (0,-3);
	\coordinate (j1) at (1,-2);
	\coordinate (j2) at (2,2);
	\coordinate (j3) at (3,7);
	\coordinate (j4) at (4,6);
	\coordinate (j5) at (5,5);
	\coordinate (j6) at (6,3);
	\coordinate (j7) at (7,4);
	\coordinate (j8) at (8,8);
	\coordinate (j9) at (9,13);
	\coordinate (j10) at (10,12);
	\coordinate (j11) at (11,11);
	\coordinate (j12) at (12,9);
	\coordinate (j13) at (13,10);
	\coordinate (j14) at (14,14);

	\fill [lightgray] (j1) -- (i1) -- (j3) -- (i3) -- (j8) -- (i8) -- (j6) -- (i6) -- (j1);
	\fill [lightgray] (jneg5) -- (ineg5) -- (jneg3) -- (ineg3) -- (j2) -- (i2) -- (j0) -- (i0) -- (jneg5);
	\fill [lightgray] (j7) -- (i7) -- (j9) -- (i9) -- (j14) -- (i14) -- (j12) -- (i12) -- (j7);

	\draw (j1) -- (i1) -- (j3) -- (i3) -- (j8) -- (i8) -- (j6) -- (i6) -- (j1);
	\draw (jneg5) -- (ineg5) -- (jneg3) -- (ineg3) -- (j2) -- (i2) -- (j0) -- (i0) -- (jneg5);
	\draw (j7) -- (i7) -- (j9) -- (i9) -- (j14) -- (i14) -- (j12) -- (i12) -- (j7);

	\foreach \point in {ineg5,ineg3,ineg2,ineg1,i0,i1,i2,i3,i4,i5,i6,i7,i8,i9,i10,i11,i12,i14}
		{\fill [black] (\point) circle (4pt);}

	\foreach \point in {jneg5,jneg4,jneg3,jneg2,jneg1,j0,j1,j2,j3,j4,j5,j6,j7,j8,j9,j10,j11,j12,j13,j14}
		{\draw [red] (\point) node {$\times$};}

	\draw[loosely dotted,thick] (i4) -- (i5);
	\draw[loosely dotted,thick] (ineg2) -- (ineg1);
	\draw[loosely dotted,thick] (i10) -- (i11);
	\end{tikzpicture}
	\end{array}
	\end{displaymath}
	\end{center}
	\caption{Cases with $i_3\not\equiv i_4$ and $i_1\not\equiv i_2\pmod{n}$.}
	\label{fig:alldistinct}
	\end{figure}
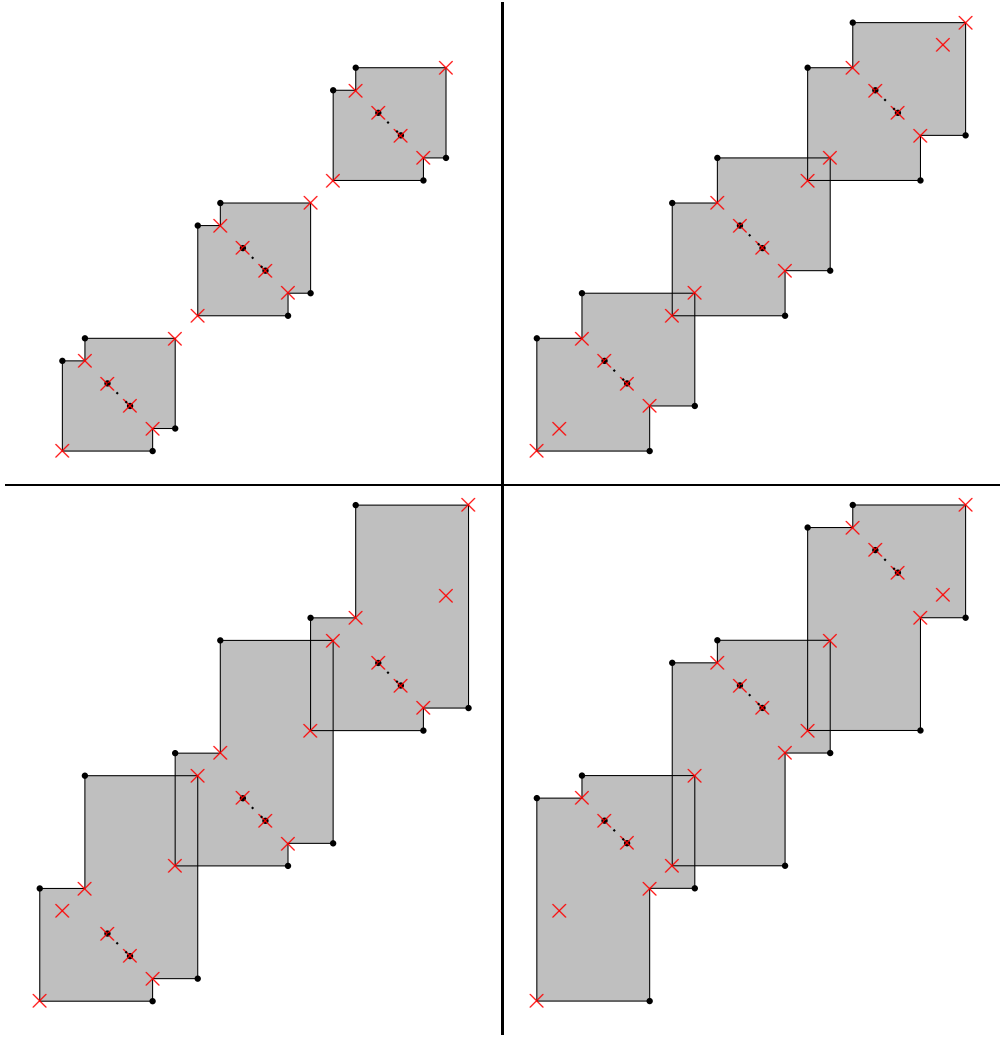

	It remains to prove the claim that these are all possible arrangements.
	If either $i_2<i_3+n$, or $\myomega_{i_4}<\myomega_{i_1+n}$,
	then the shaded regions do not overlap,
	as seen in the first picture of Figure~\ref{fig:alldistinct}.   In this case, 
	$\ell(\myomega)-\ell(\mychi)$ and $\#\scrR(\mychi,\myomega)$ do not depend on how the other values intersperse the pattern. 	

	Suppose $i_2>i_3+n$ and $\myomega_{i_4}>\myomega_{i_1+n}$, so that the translated shaded regions do overlap.
	We must have $i_2<i_1+n$ or else we obtain a dot in one of the forbidden regions in Figure~\ref{fig:normalize45312}.
	Furthermore, the indices $i_4,i_4+1,\dots,i_4+k,i_1$ are all consecutive,
	which forces $i_3+n,i_2,i_4+n$ to also be consecutive.
	Hence,  $[\myomega_{i_3},\myomega_{i_2-n},\myomega_{i_4},\myomega_{i_4+1},\dotsc,\myomega_{i_4+k},\myomega_{i_1}]$ is a complete window.

	Observe from Figure~\ref{fig:normalize45312}, that the values $\myomega_{i_4+1},\dotsc,\myomega_{i_4+k}$ are all consecutive.
	Furthermore, we must also have $\myomega_{i_2}<\myomega_{i_1+n}$,
	and $\myomega_{i_4}<\myomega_{i_3+n}$, or else
	we obtain a dot in one of the forbidden regions in Figure~\ref{fig:normalize45312}.
	If $\myomega_{i_4+1}>\myomega_{i_2+n}$, then we may replace the 3412 pattern with
	$\myomega_{i_4+1-n}\myomega_{i_4+1}\myomega_{i_1}\myomega_{i_2}$ in the normalization procedure.
	Thus, the consecutive values $\myomega_{i_4+1},\dotsc,\myomega_{i_4+k}$ must all lie either between
	$\myomega_{i_2}$ and $\myomega_{i_1+n}$ or between $\myomega_{i_1+n}$ and $\myomega_{i_2+n}$.

	Consider first, when $\myomega_{i_2}<\myomega_{i_4+k}<\cdots<\myomega_{i_4+1}<\myomega_{i_1+n}$.
	By our normalization procedure, there are two possibilities for $\myomega_{i_4}$;
	either $\myomega_{i_1+n}<\myomega_{i_4}<\myomega_{i_2+n}$ or $\myomega_{i_4+1+n}<\myomega_{i_4}<\myomega_{i_3+n}$.
	These cases appear in pictures 2 and 3 of Figure~\ref{fig:alldistinct}.
	In either case, the relative value of $\myomega_{i_3}$ is forced in the interval
	$\myomega_{i_4+1}<\myomega_{i_3}<\myomega_{i_1+n}$, by the normalization procedure.

	Finally, consider the case where $\myomega_{i_1+n}<\myomega_{i_4+k}<\cdots<\myomega_{i_4+1}<\myomega_{i_2+n}$.
	If $\myomega_{i_4}>\myomega_{i_4+1+n}$, then $\myomega_{i_4}\myomega_{i_4+1}\myomega_{i_4+1+n}\myomega_{i_1+n}$ forms a 4231 pattern.
	If $\myomega_{i_2+n}<\myomega_{i_4}<\myomega_{i_4+1+n}$, then we could replace the 3412 pattern with
	$\myomega_{i_4-n}\myomega_{i_4+1}\myomega_{i_1}\myomega_{i_2}$ in the normalization procedure.
	Hence,  we must have $\myomega_{i_4}<\myomega_{i_2+n}$.
	This case corresponds to the last picture in Figure~\ref{fig:alldistinct}.
	Again, the relative value of $\myomega_{i_3}$ is forced.

\item  \label{case:2}
Two residues the same.\\
	There are only two cases to consider;
	either $i_3\equiv i_4\pmod{n}$ or $i_1\equiv i_2\pmod{n}$.
	Note that both cannot happen simultaneously,
	otherwise $\myomega_{i_3}\myomega_{i_4+1-n}\myomega_{i_4+1}\myomega_{i_1}$ gives a 4231 pattern.
	If $i_3\equiv i_4\pmod{n}$, then let $\mychi=\myomega t_{i_4i_1}t_{i_1i_2}t_{i_2,i_4+n}$.
	If instead $i_1\equiv i_2\pmod{n}$, then let $\mychi=\myomega t_{i_4i_1}t_{i_3i_4}t_{i_1,i_3+n}$.
	The picture for the case where $i_3\equiv i_4$ is given in Figure~\ref{fig:2thesame}.
	The picture for the other case can be obtained by turning Figure~\ref{fig:2thesame} upside-down.
	In either case we have $\ell(\myomega)-\ell(\mychi)=2k+3$ and $\#\scrR(\mychi,\myomega)=2k+4$.

	Note that Figure~\ref{fig:2thesame} represents the only possible interspersing arrangement in this case.  In particular, 
         we must have $\myomega_{i_3}<\myomega_{i_1+n}$.
	Otherwise, $\myomega_{i_1+n}<\myomega_{i_4+k}$ by Figure~\ref{fig:normalize45312},
	and $\myomega_{i_4}=\myomega_{i_3+n}>\myomega_{i_4+1+n}$,
	and hence $\myomega_{i_4}\myomega_{i_4+1}\myomega_{i_4+1+n}\myomega_{i_1+n}$ gives a 4231 pattern.
	Similarly, if $i_1\equiv i_2\pmod{n}$, then we must have $\myomega_{i_4}<\myomega_{i_2+n}$,
	or else $\myomega_{i_4}\myomega_{i_4+1}\myomega_{i_4+1+n}\myomega_{i_2}$ gives a 4231 pattern.

	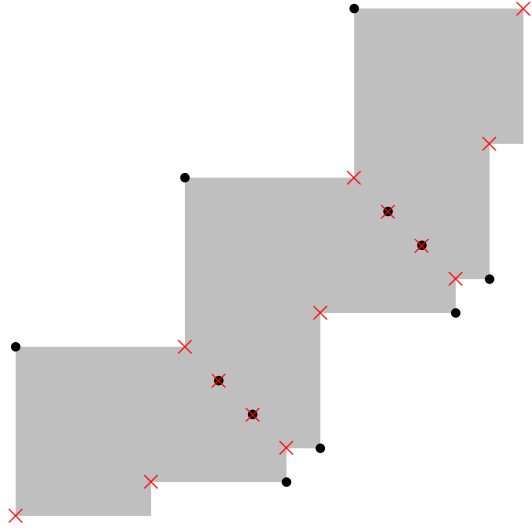
\begin{figure}[h]
	\begin{center}
	\begin{tikzpicture}
	\pgftransformscale{0.45}

	\coordinate (ineg4) at (-4,5);
	\coordinate (i1) at (1,10);
	\coordinate (i2) at (2,4);
	\coordinate (i3) at (3,3);
	\coordinate (i4) at (4,1);
	\coordinate (i5) at (5,2);
	\coordinate (i6) at (6,15);
	\coordinate (i7) at (7,9);
	\coordinate (i8) at (8,8);
	\coordinate (i9) at (9,6);
	\coordinate (i10) at (10,7);
	
	\coordinate (jneg4) at (-4,0);
	\coordinate (j0) at (0,1);
	\coordinate (j1) at (1,5);
	\coordinate (j2) at (2,4);
	\coordinate (j3) at (3,3);
	\coordinate (j4) at (4,2);
	\coordinate (j5) at (5,6);
	\coordinate (j6) at (6,10);
	\coordinate (j7) at (7,9);
	\coordinate (j8) at (8,8);
	\coordinate (j9) at (9,7);
	\coordinate (j10) at (10,11);
	\coordinate (j11) at (11,15);

	\fill [lightgray] (jneg4) -- (ineg4) -- (j1) -- (i1) -- (j6) -- (i6) -- (j11) -- (11,11) -- (j10) -- (i10) -- (j9) -- (i9) --
		(j5) -- (i5) -- (j4) -- (i4) -- (j0) -- (0,0) -- (jneg4);

	\foreach \point in {ineg4,i1,i2,i3,i4,i5,i6,i7,i8,i9,i10}
		{\fill [black] (\point) circle (4pt);}

	\foreach \point in {jneg4,j0,j1,j2,j3,j4,j5,j6,j7,j8,j9,j10,j11}
		{\draw [red] (\point) node {$\times$};}
	\end{tikzpicture}
	\end{center}
	\caption{Case with $i_3\equiv i_4\pmod{n}$.}
	\label{fig:2thesame}
	\end{figure}
\end{enumerate}
	
\subsection{When Region $B$ is Empty.}
By the normalization in Figure~\ref{fig:normalize}, we may assume $i_1=i_4+1$ since region $B$ is empty.
\begin{enumerate}[\text{Case }1:]
\item Either $i_3\not\equiv i_4$ or $i_1\not\equiv i_2\pmod{n}$.\\
	In this case we do not flatten the affine permutation and use Bruhat pictures.
	Instead, we use the methods and notation from Section~\ref{Converse} to show directly that
	the Poincar\'e polynomial fails to be palindromic at degree 1.
	Fix $\beta$ such that $\myomega_\beta=\max_{j\le i_3}\{\myomega_j\}$.
	Hence,  $\beta\le i_3<\beta+n$.
	Construct the graph $G_\beta$, as in Section~\ref{Converse},
	and let $e_1,\dotsc,e_r$ be the extra edges from Lemma~\ref{notproper}.
	By Proposition~\ref{propercase}, we may assume $\myomega$ is not contained in a proper parabolic subgroup.
	Hence, we need only show that $|E_\beta|>n$.
	By Lemma~\ref{notproper}, $|E_{\beta}| \geq n$ so we need only show
	there exists one edge not in $H_{\beta}$ or in $\{e_1,\ldots,e_r\}$. 
	Suppose $i_3$ is a vertex in $H_{i,\beta}\subset G_\beta$.
	There are three ways the 3412 pattern can occur in $G_\beta$ in this case.
	\begin{enumerate}[(a)]
	\item \label{case1} $i_4<i_3+n$.\\
		If $i_2-i_3<n$, then $\gamma=(i_3,i_2)$ is an edge in $G_\beta$, by Proposition~\ref{BruhatOrder}.
		Otherwise, there exists some $m\ge0$ such that $0<\myomega_{i_3}-\myomega_{i_2+mn}<n$,
		in which case $\gamma=(i_3,i_2+mn)$ is an edge in $G_\beta$.
		Since $i_1=i_4+1$ and $i_4<i_3+n$, we also have $i_1<i_3+n$.
		Hence, there is a sequence $i_3=j_1<j_2<\cdots<j_p=i_1$, such that
		$\ell(\myomega t_{j_m,j_{m+1}})=\ell(\myomega)-1$ for all $1\le m<p$, by Lemma~\ref{chain} which gives 
                us a path in $G_{\beta}$ from $i_{3}$ to $i_{1}$.  
		Let $\delta$ be the first edge in this path not contained entirely in $H_{i,\beta}$.
		Since $i_3<i_4<i_1$ and $\myomega_{i_4}>\myomega_{i_3}$,
		$i_3$ and $i_1$ cannot both be in $H_{i,\beta}$,
		we conclude such an edge $\delta$ exists.
		At most one of $\gamma$ and $\delta$ can be the added edge $e_i$
		and neither edge is in $H_\beta$.

	\item \label{case2} $i_4\ge i_3+n$ and $i_1\not\equiv i_2\pmod{n}$.\\
		There exist $u,v\ge0$ such that $0<\myomega_{i_3}-\myomega_{i_1+un}<n$ and $0<\myomega_{i_3}-\myomega_{i_2+vn}<n$.
		By the way we have normalized the 3412 pattern, there is no $i_3<j<i_2+vn$ with $\myomega_{i_2+vn}<\myomega_j<\myomega_{i_3}$,
		since $\myomega_{j-vn}$ cannot be in the dotted region of Figure~\ref{fig:normalize}.
		Hence, $\gamma=(i_3,i_2+vn)$ is an edge in $G_\beta$.
		
		\hspace*{6pt} 
	  	To identify a second edge in $G_{\beta}$, 
                we must consider the different ways $\pt_\myomega(i_1+un)$ and $\pt_\myomega(i_2+vn)$ are arranged.
		If $\pt_\myomega(i_1+un)$ is southwest or northeast of $\pt_\myomega(i_2+vn)$,
		then let $\delta$ be the first edge in the chain from $i_3$ to $i_1+un$ found in Lemma~\ref{chain}, that is not contained in $H_{i,\beta}$.
		Then $\gamma\not=\delta$, since their right endpoints are guaranteed to be different.
		At most one of $\gamma$ and $\delta$ can be the added edge $e_i$
		and neither edge is in $H_\beta$.
		
		\hspace*{6pt} Suppose instead, $\pt_\myomega(i_1+un)$ is southeast of $\pt_\myomega(i_2+vn)$.
		Then $i_1+un>i_2+vn$ and $\myomega_{i_1+un}<\myomega_{i_2+vn}$ and hence
		$i_1>i_2+(v-u)n$ and $\myomega_{i_1}<\myomega_{i_2+(v-u)n}$.
		Recall we have normalized our permutation so that either $i_2-i_1<n$ or $\myomega_{i_2}-\myomega_{i_1}<n$.
		But the southeastern configuration can only happen if $i_2-i_1<n$.
		Hence,  $i_3<i_2-n<i_4=i_3+n$.
		
		\hspace*{6pt} If $i_2-n$ is not a vertex in $H_{i,\beta}$, then let $\delta$ be the first edge in the
		sequence from $i_3$ to $i_2-n$ that leaves $H_{i,\beta}$.
		Otherwise, if $i_2-n$ is a vertex in $H_{i,\beta}$, then let $\delta$ be the first edge
		in the sequence from $i_2-n$ to $i_1$ that leaves $H_{i,\beta}$.
		In either case, $\delta$ is different from $\gamma$, since its right endpoint is smaller.
		At most one of $\gamma$ and $\delta$ can be the added edge $e_i$
		and neither edge is in $H_\beta$.

	\item \label{case3} $i_4>i_3+n$ and $i_1\equiv i_2\pmod{n}$.\\
		As in Case~\ref{case2}, there exists some $s\ge1$ such that $0<\myomega_{i_3}-\myomega_{i_1+sn}<n$
		and the edge $\gamma=(i_3,i_1+sn)$ is in $G_\beta$.
		Since $\myomega_{i_4}>\myomega_{i_2}=\myomega_{i_1+n}$, we have $\myomega_{i_4-n}>\myomega_{i_1}$.
		Since $i_4-i_3>n$, we must have $\myomega_{i_4}-\myomega_{i_3}<n$.
		In fact, by the normalization of the 3412 pattern, this forces $\myomega_{i_1}<\myomega_{i_4-n}<\myomega_{i_2}$.
		There are two cases to consider.
		
		\hspace*{6pt} If $i_4-n$ is not a vertex in $H_{i,\beta}$,
		then let $\delta$ be the first edge in the sequence from $i_3$ to $i_4-n$ that leaves $H_{i,\beta}$.
		If instead, $i_4-n$ is a vertex in $H_{i,\beta}$,
		then let $\delta$ be the first edge in the sequence from $i_4-n$ to $i_1$ that leaves $H_{i,\beta}$.
		In either case, $\delta$ if different from $\gamma$, since the right end point of $\delta$ is smaller than $\myomega_{i_2}$.
		At most one of $\gamma$ and $\delta$ can be the added edge $e_i$
		and neither edge is in $H_\beta$.
	\end{enumerate}

\item Every normalized 3412 pattern has $i_3\equiv i_4$ and $i_1\equiv i_2\pmod{n}$.\\
	We can assume that $i_{3}=i_{4}-n$ and $i_{2}=i_{1}+n$.  
	Then $\myomega$ is as in Figure~\ref{fig:3412nodistinct} where region $E$ is the rectangle with corners at 
         $(i_{1}-n, \myomega_{i_{4}-2n})$ and $(i_{4}, \myomega_{i_{1}+n})$.     
	Since $\myomega$ avoids 4231, either all of the values in region $E$ are decreasing or $E$ is empty.

	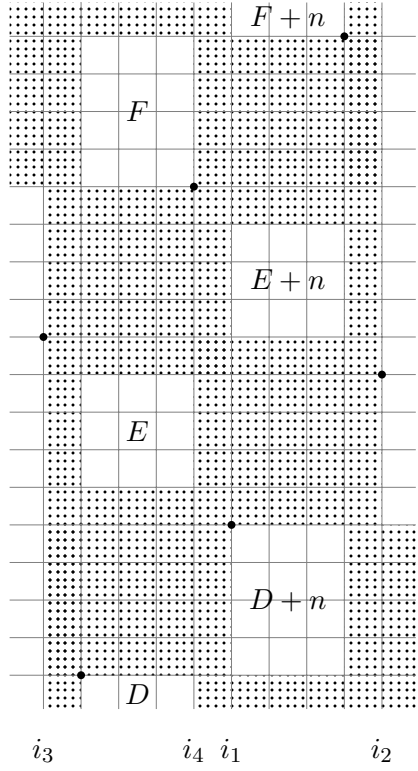
\begin{figure}[h]
	\begin{center}
	\begin{tikzpicture}
	\pgftransformscale{0.5}

	\coordinate (i3) at (0,9);
	\coordinate (i4) at (4,13);
	\coordinate (i5) at (8,17);
	\coordinate (i0) at (1,0);
	\coordinate (i1) at (5,4);
	\coordinate (i2) at (9,8);

	\fill [pattern=dots] (0,-0.9) -- (0,13) -- (-0.9,13) -- (-0.9,17.9) -- (5,17.9) -- (5,17) -- (1,17) -- (1,-0.9) -- (0,-0.9);
	\fill [pattern=dots] (1,8) -- (1,13) -- (5,13) -- (5,8) -- (1,8);
	\fill [pattern=dots] (0,0) -- (0,4) -- (5,4) -- (5,0) -- (0,0);
	\fill [pattern=dots] (4,0) -- (4,-0.9) -- (9.9,-0.9) -- (9.9,4) -- (8,4) -- (8,0) -- (4,0);
	\fill [pattern=dots] (4,4) -- (4,9) -- (9,9) -- (9,4) -- (4,4);
	\fill [pattern=dots] (4,13) -- (4,17) -- (9,17) -- (9,13) -- (4,13);
	\fill [pattern=dots] (8,9) -- (8,17.9) -- (9,17.9) -- (9,9) -- (8,9);
	\fill [pattern=dots] (1,4) -- (1,5) -- (4,5) -- (4,4) -- (1,4);
	\fill [pattern=dots] (5,12) -- (5,13) -- (8,13) -- (8,12) -- (4,12);
	\draw[step=1,gray,very thin] (-0.9,-0.9) grid (9.9,17.9);
	\foreach \point in {i3,i4,i5,i0,i1,i2}
		{\fill [black] (\point) circle (3pt);}

	\draw (0,-2) node{$i_3$};
	\draw (4,-2) node{$i_4$};
	\draw (5,-2) node{$i_1$};
	\draw (9,-2) node{$i_2$};
	\draw (2.5,-0.5) node{$D$};
	\draw (2.5,6.5) node{$E$};
	\draw (2.5,15) node{$F$};
	\draw (6.5,2) node{$D+n$};
	\draw (6.5,10.5) node{$E+n$};
	\draw (6.5,17.5) node{$F+n$};

	\end{tikzpicture}
	\end{center}
	\caption{A normalized $3412$ pattern with only two distinct residues.}
	\label{fig:3412nodistinct}
	\end{figure}

	\begin{enumerate}[(a)]
	\item Region $E$ is nonempty.\\
		There exists some $i_1-n<j<i_4$ with $\myomega_{i_1}<\myomega_j<\myomega_{i_1+n}$.
		By the normalization process, $\myomega_{i_4-n}<\myomega_{j+n}<\myomega_{i_1+2n}$,
		since there is no $i_1<k<i_1+n$ with $\myomega_{i_1+n}<\myomega_k<\myomega_{i_4-n}$.
		Hence, $\myomega_{i_4+rn}<\myomega_{i_1+(r+1)n}<\myomega_{i_4+(r-1)n}$ for all $r\in\Z$.
		Let $\mychi=\myomega t_{i_4-n,i_1}t_{i_4,i_1}$.
		Replace $\mychi,\myomega$, with the flattened pair $\wt{\mychi},\wt{\myomega}$.
		Any indices with values in regions $D$ or $F$, aside from $i_4$ and $i_1$,
		are removed during the flattening process,
		and we are left with the picture in Figure~\ref{fig:boththesameE}.
		Suppose there are $k\ge1$ (decreasing) indices in region $E$.
		Then $\ell(\myomega)-\ell(\mychi)=2k+2$ and $\#\scrR(\mychi,\myomega)=4k+2$.

		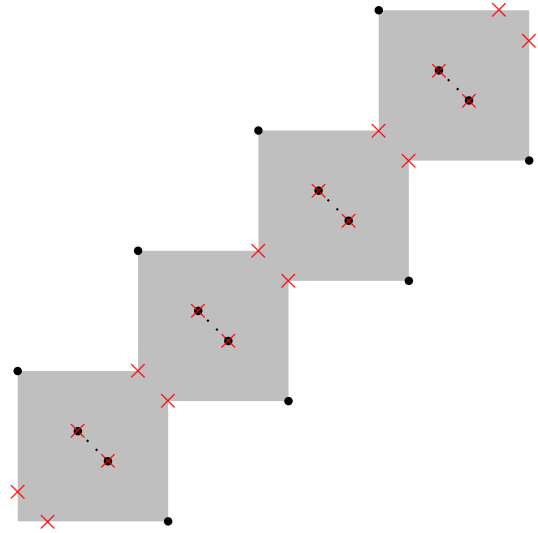
\begin{figure}
		\begin{center}
		\begin{tikzpicture}
		\pgftransformscale{0.4}

		\coordinate (ineg3) at (-3,2);
		\coordinate (ineg1) at (-1,0);
		\coordinate (i0) at (0,-1);
		\coordinate (i1) at (1,6);
		\coordinate (i2) at (2,-3);
		\coordinate (i3) at (3,4);
		\coordinate (i4) at (4,3);
		\coordinate (i5) at (5,10);
		\coordinate (i6) at (6,1);
		\coordinate (i7) at (7,8);
		\coordinate (i8) at (8,7);
		\coordinate (i9) at (9,14);
		\coordinate (i10) at (10,5);
		\coordinate (i11) at (11,12);
		\coordinate (i12) at (12,11);
		\coordinate (i14) at (14,9);

		\coordinate (jneg3) at (-3,-2);
		\coordinate (jneg2) at (-2,-3);
		\coordinate (jneg1) at (-1,0);
		\coordinate (j0) at (0,-1);
		\coordinate (j1) at (1,2);
		\coordinate (j2) at (2,1);
		\coordinate (j3) at (3,4);
		\coordinate (j4) at (4,3);
		\coordinate (j5) at (5,6);
		\coordinate (j6) at (6,5);
		\coordinate (j7) at (7,8);
		\coordinate (j8) at (8,7);
		\coordinate (j9) at (9,10);
		\coordinate (j10) at (10,9);
		\coordinate (j11) at (11,12);
		\coordinate (j12) at (12,11);
		\coordinate (j13) at (13,14);
		\coordinate (j14) at (14,13);

		\fill [lightgray] (-3,-3) -- (ineg3) -- (j1) -- (i1) -- (j5) -- (i5) -- (j9) -- (i9) -- (14,14) -- (i14) --
			(j10) -- (i10) -- (j6) -- (i6) -- (j2) -- (i2) -- (-3,-3);
		\foreach \point in {ineg3,ineg1,i0,i1,i2,i3,i4,i5,i6,i7,i8,i9,i10,i11,i12,i14}
			{\fill [black] (\point) circle (4pt);}
		\foreach \point in {jneg3,jneg2,jneg1,j0,j1,j2,j3,j4,j5,j6,j7,j8,j9,j10,j11,j12,j13,j14}
			{\draw [red] (\point) node {$\times$};}
		\draw[loosely dotted,thick] (i3) -- (i4);
		\draw[loosely dotted,thick] (i7) -- (i8);
		\draw[loosely dotted,thick] (i11) -- (i12);
		\draw[loosely dotted,thick] (ineg1) -- (i0);

		\end{tikzpicture}
		\end{center}
		\caption{Region $E$ is nonempty.}
		\label{fig:boththesameE}
		\end{figure}

	\item Region $E$ is empty, but regions $D$ and $F$ are both nonempty.\\
	        We can assume that $\pt_w(i_{4}-1)$ is in $F$ and $\pt_{w}(i_{1}+1)$ is in region $D+n$ since every normalized
		3412 pattern has only two congruence classes among ${i_{1},i_{2},i_{3},i_{4}}$ and $E$ is empty.    
		Let $\mychi=\myomega t_{i_4-n-1,i_1+1}t_{i_4-1,t_1+1}t_{i_4-n,i_1}t_{i_4i_1}$.
		Replace $\mychi,\myomega$, with the flattened pair $\wt{\mychi},\wt{\myomega}$,
		so that we have flattened onto the consecutive indices $\{i_4-1,i_4,i_1,i_1+1\}$.
		Fix $k\geq 3$,  such that $0<\myomega_{i_4}-\myomega_{i_1+kn}<n$.
		Figure~\ref{fig:boththesameDF} shows the picture when $k=3$ and $k=4$.
		Regardless of $k$, we have $\ell(\myomega)-\ell(\mychi)=8$ and $\#\scrR(\mychi,\myomega)=12$.

		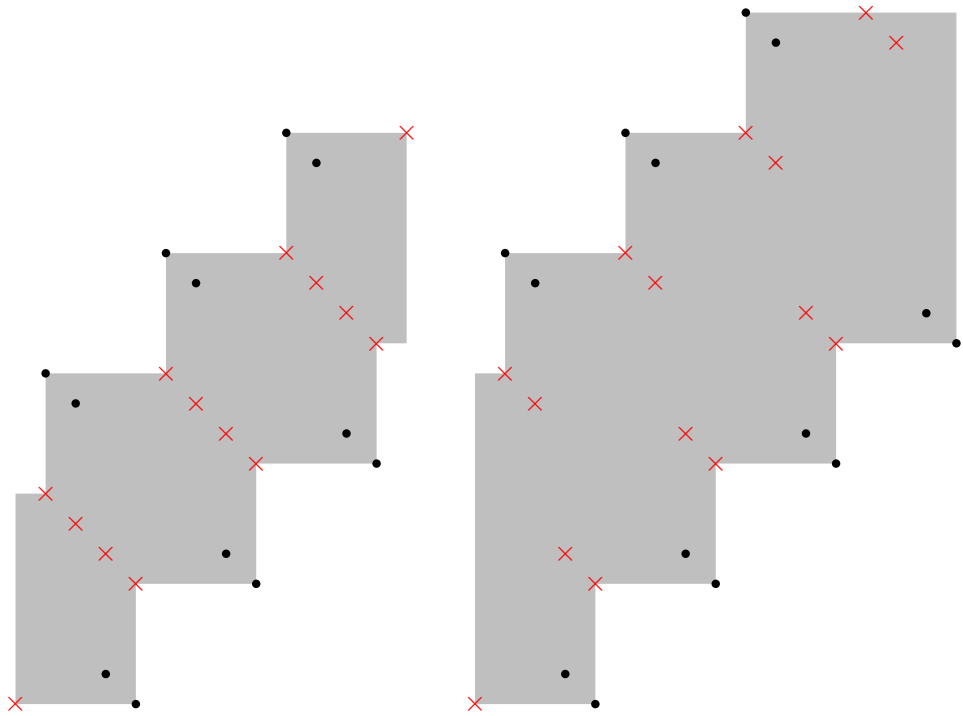
\begin{figure}
		\begin{displaymath}
		\begin{array}{cc}
		\begin{tikzpicture}
		\pgftransformscale{0.4}

		\coordinate (ineg3) at (-3,4);
		\coordinate (ineg2) at (-2,3);
		\coordinate (ineg1) at (-1,-6);
		\coordinate (i0) at (0,-7);
		\coordinate (i1) at (1,8);
		\coordinate (i2) at (2,7);
		\coordinate (i3) at (3,-2);
		\coordinate (i4) at (4,-3);
		\coordinate (i5) at (5,12);
		\coordinate (i6) at (6,11);
		\coordinate (i7) at (7,2);
		\coordinate (i8) at (8,1);

		\coordinate (jneg4) at (-4,-7);
		\coordinate (jneg3) at (-3,0);
		\coordinate (jneg2) at (-2,-1);
		\coordinate (jneg1) at (-1,-2);
		\coordinate (j0) at (0,-3);
		\coordinate (j1) at (1,4);
		\coordinate (j2) at (2,3);
		\coordinate (j3) at (3,2);
		\coordinate (j4) at (4,1);
		\coordinate (j5) at (5,8);
		\coordinate (j6) at (6,7);
		\coordinate (j7) at (7,6);
		\coordinate (j8) at (8,5);
		\coordinate (j9) at (9,12);
		
		\fill [lightgray] (jneg4) -- (-4,0) -- (jneg3) -- (ineg3) -- (j1) -- (i1) -- (j5) -- (i5) -- (j9) --
			(9,5) -- (j8) -- (i8) -- (j4) -- (i4) -- (j0) -- (i0) -- (jneg4);
		\foreach \point in {ineg3,ineg2,ineg1,i0,i1,i2,i3,i4,i5,i6,i7,i8}
			{\fill [black] (\point) circle (4pt);}
		\foreach \point in {jneg4,jneg3,jneg2,jneg1,j0,j1,j2,j3,j4,j5,j6,j7,j8,j9}
			{\draw [red] (\point) node {$\times$};}
		\end{tikzpicture}
		&
		\begin{tikzpicture}
		\pgftransformscale{0.4}

		\coordinate (ineg3) at (-3,6);
		\coordinate (ineg2) at (-2,5);
		\coordinate (ineg1) at (-1,-8);
		\coordinate (i0) at (0,-9);
		\coordinate (i1) at (1,10);
		\coordinate (i2) at (2,9);
		\coordinate (i3) at (3,-4);
		\coordinate (i4) at (4,-5);
		\coordinate (i5) at (5,14);
		\coordinate (i6) at (6,13);
		\coordinate (i7) at (7,0);
		\coordinate (i8) at (8,-1);
		\coordinate (i11) at (11,4);
		\coordinate (i12) at (12,3);

		\coordinate (jneg4) at (-4,-9);
		\coordinate (jneg3) at (-3,2);
		\coordinate (jneg2) at (-2,1);
		\coordinate (jneg1) at (-1,-4);
		\coordinate (j0) at (0,-5);
		\coordinate (j1) at (1,6);
		\coordinate (j2) at (2,5);
		\coordinate (j3) at (3,0);
		\coordinate (j4) at (4,-1);
		\coordinate (j5) at (5,10);
		\coordinate (j6) at (6,9);
		\coordinate (j7) at (7,4);
		\coordinate (j8) at (8,3);
		\coordinate (j9) at (9,14);
		\coordinate (j10) at (10,13);
		
		\fill [lightgray] (jneg4) -- (-4,2) -- (jneg3) -- (ineg3) -- (j1) -- (i1) -- (j5) -- (i5) -- (12,14) --
			(i12) -- (j8) -- (i8) -- (j4) -- (i4) -- (j0) -- (i0) -- (jneg4);
		\foreach \point in {ineg3,ineg2,ineg1,i0,i1,i2,i3,i4,i5,i6,i7,i8,i11,i12}
			{\fill [black] (\point) circle (4pt);}
		\foreach \point in {jneg4,jneg3,jneg2,jneg1,j0,j1,j2,j3,j4,j5,j6,j7,j8,j9,j10}
			{\draw [red] (\point) node {$\times$};}
		\end{tikzpicture}
		\end{array}
		\end{displaymath}
		\caption{Region $E$ is empty, but regions $D$ and $F$ are nonempty ($k=3,4$).}
		\label{fig:boththesameDF}
		\end{figure}

	\item Region $E$ is empty, and exactly one of regions $D$ and $F$ is empty.\\
		Note, one of them must be nonempty whenever $n\ge3$.
          	Assume, $D$ is nonempty, the other case being similar.  

		\hspace*{6pt}If there is an ascent among the values in region $D$, choose a
		leftmost pair $i_{1}<a,b<i_{4}+n$ such that $w_{a}<w_{b}$.
		For example, one could let $a>i_{1}$ be the smallest index such that there
		exists a $a<j<i_{4}+n$ with $w_{a}<w_{j}<w_{i_{1}}$, and let $b>a$ be
		the smallest index such that $w_{a}<w_{b}$.
		Then $w_{i_{4}-n}w_{i_{4}}w_{i_{1}}w_{a}w_{b}$ forms a $45312_{k}$ pattern.
		Following the same reasoning as in Case 2 on Page~\pageref{case:2},
		let $\mychi=\myomega t_{i_4,a}t_{a,b}t_{b,i_4+n}$ and replace
		$\mychi,\myomega$ by the corresponding flattened pair.
		The affine Bruhat picture for $k=2$ is shown in Figure~\ref{fig:DhasAscent}.

		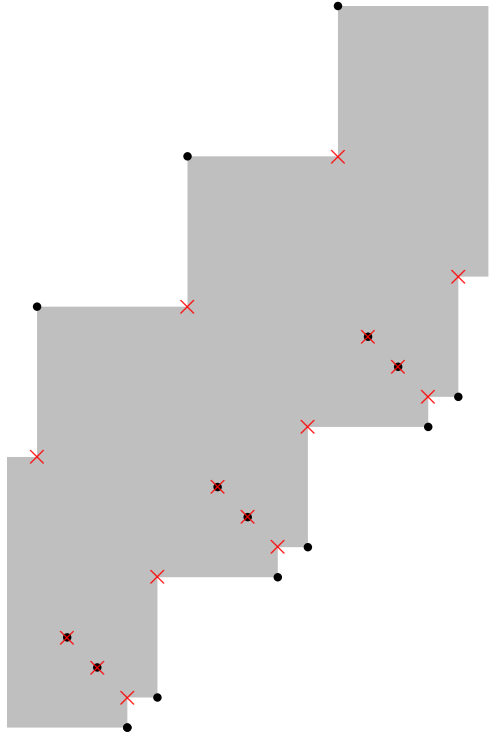
\begin{figure}
		\begin{center}
		\begin{tikzpicture}
		\pgftransformscale{0.4}

		\coordinate (ineg4) at (-4,8);
		\coordinate (ineg3) at (-3,-3);				
		\coordinate (ineg2) at (-2,-4);
		\coordinate (ineg1) at (-1,-6);
		\coordinate (i0) at (0,-5);
		\coordinate (i1) at (1,13);
		\coordinate (i2) at (2,2);
		\coordinate (i3) at (3,1);
		\coordinate (i4) at (4,-1);
		\coordinate (i5) at (5,0);
		\coordinate (i6) at (6,18);
		\coordinate (i7) at (7,7);
		\coordinate (i8) at (8,6);
		\coordinate (i9) at (9,4);
		\coordinate (i10) at (10,5);

		\coordinate (jneg5) at (-5,-6);
		\coordinate (jneg4) at (-4,3);
		\coordinate (jneg3) at (-3,-3);
		\coordinate (jneg2) at (-2,-4);
		\coordinate (jneg1) at (-1,-5);
		\coordinate (j0) at (0,-1);
		\coordinate (j1) at (1,8);
		\coordinate (j2) at (2,2);
		\coordinate (j3) at (3,1);
		\coordinate (j4) at (4,0);
		\coordinate (j5) at (5,4);
		\coordinate (j6) at (6,13);
		\coordinate (j7) at (7,7);
		\coordinate (j8) at (8,6);
		\coordinate (j9) at (9,5);
		\coordinate (j10) at (10,9);
		\coordinate (j11) at (11,18);

		\fill [lightgray] (jneg4) -- (ineg4) -- (j1) -- (i1) -- (j6) -- (i6) -- (j11) -- (11,9) -- (j10) -- (i10) --
			(j9) -- (i9) -- (j5) -- (i5) -- (j4) -- (i4) -- (j0) -- (i0) -- (jneg1) -- (ineg1) -- (jneg5) -- (-5,3) -- (jneg4);
		\foreach \point in {ineg4,ineg3,ineg2,ineg1,ineg1,i0,i1,i2,i3,i4,i5,i6,i7,i8,i9,i10}
			{\fill [black] (\point) circle (4pt);}
		\foreach \point in {jneg4,jneg3,jneg2,jneg1,j0,j1,j2,j3,j4,j5,j6,j7,j8,j9,j10}
			{\draw [red] (\point) node {$\times$};}

		\end{tikzpicture}
		\end{center}
		\caption{Region $E$ is empty and region $D$ has an ascent (k=2).}
		\label{fig:DhasAscent}
		\end{figure}

		\hspace*{6pt}For all $k\geq 1$, we have $\ell(\myomega)-\ell(\mychi)=2k+3$ and $\#\scrR(\mychi,\myomega)=2k+4$.
		The picture for the case $F$ is nonempty and contains an ascent can be
		obtained by turning Figure~\ref{fig:DhasAscent} upside-down.

		\hspace*{6pt}Finally, if the values in region $D$ are all decreasing, then the
		entries must be $n-1$ consecutive values in consecutive positions,
		or else either region $D$ or $E$ will be nonempty since $w$ is a bijection.
		In this case, we claim $\myomega$ is a twisted spiral permutation.

		\hspace*{6pt}To prove the claim, let $k\geq 2$ be the integer such that $\myomega_{i_1+kn}=\myomega_{i_4}-1$.
		Then $\myomega \cdot c(i_{4},k(n-1))=w_{0}$, using the notation from Section~\ref{s:spirals}.  
		Hence, by \eqref{e:spiralswap}, $\myomega$ is a twisted spiral permutation.
	\end{enumerate}
\end{enumerate}
\end{proof}

\section{Further Directions}
\label{conjectures}

Unlike rationally smooth Schubert varieties, there are only finitely many smooth Schubert varieties corresponding to affine permutations.
In the proof of Theorem~\ref{thm:onedirection}, we saw that any 3412 and 4231 avoiding affine permutation can be written in the form
$\myomega=\myomega^\prime\mysigma$, with $\ell(\myomega)=\ell(\myomega^\prime)+\ell(\mysigma)$.
Both $\myomega^\prime$ and $\mysigma$ are elements of a proper parabolic subgroup of $\wt{S}_n$,
and hence $\ell(\myomega^\prime),\ell(\mysigma)\le\binom{n}{2}$.
Thus, we have the following corollary.

\begin{cor}[To Theorem~\ref{thm:onedirection}]
If $\myomega\in\wt{S}_n$ avoids 3412 and 4231, then $\ell(\myomega)\le2\binom{n}{2}$.
\end{cor}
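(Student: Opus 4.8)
The plan is to apply the factoring map $\Psi$ of Section~\ref{s:factoringmap} just once and then bound its two output factors separately. If $\myomega$ is the identity the inequality is immediate, so assume $\myomega\neq 1$. By Lemma~\ref{xexists}, either $\myomega$ or $\myomega^{-1}$ has a factoring subword, and since $\ell(\myomega)=\ell(\myomega^{-1})$ we may assume $\myomega$ itself has one; as $\myomega$ avoids $3412$, the element $\myomega^\prime=\Psi(\myomega)$ and the factorization $\myomega=\myomega^\prime\mysigma$ are then defined, with $J=S\backslash\{s_d\}$ as in Section~\ref{s:factoringmap}.

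First I would recall, exactly as in the proof of Theorem~\ref{thm:onedirection}, that $\myomega^\prime\in(\wt{S}_n)_J$, that $\mysigma\in{}^J(\wt{S}_n)$ by Lemma~\ref{reducedstart}, and that, as observed in the proof of Lemma~\ref{max}, $\myomega=\myomega^\prime\mysigma$ is the parabolic decomposition of $\myomega$ from Proposition~\ref{decomp}; in particular $\ell(\myomega)=\ell(\myomega^\prime)+\ell(\mysigma)$. It therefore suffices to bound each summand by $\binom{n}{2}$. For $\myomega^\prime$: by Proposition~\ref{parabolic1}, $(\wt{S}_n)_J$ is a maximal proper parabolic subgroup isomorphic to $S_n$, so $\ell(\myomega^\prime)$ is at most the length $\binom{n}{2}$ of the longest element of $S_n$. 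For $\mysigma$: by Lemma~\ref{sigmapalindromic}, ${}^J\poin_\mysigma(q)=\binom{k}{j}_q$, a polynomial of degree $j(k-j)$; since $\mysigma$ is a minimal-length coset representative, the degree of ${}^J\poin_\mysigma$ equals $\ell(\mysigma)$, so $\ell(\mysigma)=j(k-j)$. From the identity $j(k-j)+\binom{j}{2}+\binom{k-j}{2}=\binom{k}{2}$ we get $j(k-j)\le\binom{k}{2}$, and since $k\le n$ by the remark following Definition~\ref{factsubword}, we conclude $\ell(\mysigma)\le\binom{n}{2}$. Adding the two bounds, $\ell(\myomega)=\ell(\myomega^\prime)+\ell(\mysigma)\le 2\binom{n}{2}$.

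I do not anticipate any genuine obstacle: the statement is a length count read off from the factorization already established in Section~\ref{One Direction}, and the only points deserving a careful sentence are the additivity $\ell(\myomega)=\ell(\myomega^\prime)+\ell(\mysigma)$ and the fact that both $\myomega^\prime$ and $\mysigma$ sit inside proper parabolic subgroups of $\wt{S}_n$. If one prefers to avoid invoking Lemma~\ref{sigmapalindromic}, one can instead read off from $\mysigma^{-1}=c(\gamma,k-j)c(\gamma-1,k-j)\cdots c(\gamma-(j-1),k-j)$ that $\mysigma$ involves only the $k-1\le n-1$ consecutive simple reflections $s_{\gamma-(j-1)},\dots,s_{\gamma+k-j-1}$, hence lies in a proper parabolic subgroup isomorphic to $S_k$, which again forces $\ell(\mysigma)\le\binom{k}{2}\le\binom{n}{2}$.
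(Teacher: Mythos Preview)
Your proof is correct and follows exactly the paper's own argument: factor $\myomega=\myomega^\prime\mysigma$ via one application of $\Psi$, use length additivity from the parabolic decomposition, and bound each factor by $\binom{n}{2}$ because both lie in proper parabolic subgroups isomorphic to $S_n$ (or $S_k$). Your additional computation $\ell(\mysigma)=j(k-j)$ via the degree of $\binom{k}{j}_q$ and the alternative observation that $\mysigma$ uses only $k-1$ consecutive simple reflections are both sound and make explicit what the paper leaves as a one-line remark.
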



Since the number of affine permutations of length at most $2\binom{n}{2}$ is finite,
we would like to compute how many affine permutations in $\wt{S}_n$ avoid both 3412 and 4231.
Conjecturally, this is equivalent to the number of smooth affine Schubert varieties of type $\wt{A}_n$.
Starting with $n=2$, the first few terms of this sequence are $5,31,173,891,4373$, which did not previously appear in Sloane's \cite{Sloanes}.

\smallskip

In \cite{BE:05}, Bj\"orner and Ekedahl give general inequalities
amongst the coefficients of the Poincar\'{e} polynomial for elements
of any crystallographic Coxeter group.  In Theorem~\ref{thm:4231}, we
prove $n=c_{1}<c_{\ell(w)-1}$.
Combining this fact with \cite[Theorem C]{BE:05}, proves the following corollary.

\begin{cor}
Let $\myomega \in \wt{S}_{n} $ and assume $w$ contains a 4231.  Then
if $P_{id,w}=1+a_{1}q+ \dotsb + a_{d}q^{d}$ is the Kazhdan-Lusztig
polynomial indexed by $id,w$, then $a_{1}>0$.
\end{cor}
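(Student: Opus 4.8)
The plan is to obtain this as a short consequence of Theorem~\ref{thm:4231} and the theorem of Bj\"orner--Ekedahl in \cite{BE:05}. The input I would take from Theorem~\ref{thm:4231} is not merely that $P_\myomega(q)=\sum_i c_i q^i$ fails to be palindromic, but the sharper statement recorded in the discussion above that $c_1<c_{\ell(\myomega)-1}$: the graph $G_\beta$ constructed in its proof is shown to carry strictly more than $c_1$ edges whenever $\myomega$ contains a $4231$, and when $\myomega$ lies in a proper parabolic subgroup this reduces, via Corollary~\ref{propercase}, to the corresponding classical statement in $S_n$. With that in hand, the corollary is exactly the implication $c_1<c_{\ell(\myomega)-1}\Rightarrow a_1>0$, which is the content of \cite[Theorem C]{BE:05}, and I would simply cite it. For context I would also include a short derivation of why that implication holds.

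Here is the sketch I would give. Set $d=\ell(\myomega)$ and let $IH_\myomega(q)=\sum_k\dim IH^{2k}(X_\myomega)\,q^k$ be the Poincar\'e polynomial of intersection cohomology. Since $X_\myomega$ is projective and stratified by the affine cells $C_\mynu\cong\mathbb{A}^{\ell(\mynu)}$, $\mynu\le\myomega$, whose intersection cohomology stalks are governed by the Kazhdan--Lusztig polynomials, purity of the intersection cohomology sheaf gives the standard identity $IH_\myomega(q)=\sum_{\mynu\le\myomega}q^{\ell(\mynu)}P_{\mynu,\myomega}(q)$. Subtracting the ordinary Poincar\'e polynomial $P_\myomega(q)=\sum_{\mynu\le\myomega}q^{\ell(\mynu)}$ yields $IH_\myomega(q)-P_\myomega(q)=\sum_{\mynu<\myomega}q^{\ell(\mynu)}\bigl(P_{\mynu,\myomega}(q)-1\bigr)$, a polynomial with nonnegative coefficients and vanishing constant term by Kazhdan--Lusztig positivity (valid since $\wt{S}_n$ is crystallographic).

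Now compare coefficients of $q^1$ and of $q^{d-1}$. In degree $1$ the only summand of $\sum_{\mynu<\myomega}q^{\ell(\mynu)}(P_{\mynu,\myomega}-1)$ that contributes is $\mynu=id$, which contributes $a_1$; hence $[q^1]IH_\myomega=c_1+a_1$. In degree $d-1$ no summand contributes at all, since a nonzero contribution would require $\deg P_{\mynu,\myomega}\ge d-1-\ell(\mynu)\ge 1$, contradicting the bound $\deg P_{\mynu,\myomega}\le (d-\ell(\mynu)-1)/2$; hence $[q^{d-1}]IH_\myomega=c_{d-1}$. Finally $IH_\myomega(q)$ is palindromic of degree $d$ by Poincar\'e duality for intersection cohomology, so $c_1+a_1=[q^1]IH_\myomega=[q^{d-1}]IH_\myomega=c_{d-1}$, i.e.\ $a_1=c_{\ell(\myomega)-1}-c_1$. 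Combined with $c_1<c_{\ell(\myomega)-1}$ from Theorem~\ref{thm:4231}, this gives $a_1>0$.

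I do not anticipate a real obstacle; apart from invoking \cite[Theorem C]{BE:05}, everything is bookkeeping with standard facts from Kazhdan--Lusztig theory and intersection cohomology. The one point I would be careful about is confirming that the argument of Theorem~\ref{thm:4231} indeed delivers the strict inequality $c_1<c_{\ell(\myomega)-1}$ for \emph{every} $\myomega$ containing a $4231$ pattern, including those lying in a proper parabolic subgroup (handled through Corollary~\ref{propercase} and the classical analog), rather than only the bare non-palindromicity of $P_\myomega$.
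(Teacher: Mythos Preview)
Your approach is correct and is exactly the one the paper takes: the paragraph immediately preceding the corollary states that Theorem~\ref{thm:4231} establishes the strict inequality $c_1<c_{\ell(\myomega)-1}$ and that combining this with \cite[Theorem C]{BE:05} yields the corollary. Your additional sketch deriving $a_1=c_{\ell(\myomega)-1}-c_1$ from the intersection cohomology identity is a correct unpacking of what \cite[Theorem C]{BE:05} provides, and your caveat about the proper parabolic case is well placed---there the inequality reduces, via Corollary~\ref{propercase}, to the identical statement for $S_n$, which follows from the same Bruhat-graph edge count in the finite setting.
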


\bibliographystyle{siam}

\end{document}